\theoremstyle{plain}
\newtheorem{lem}{Lemma}[section]
\newtheorem{prop}[lem]{Proposition}
\newtheorem{thm}[lem]{Theorem}
\newtheorem*{thm*}{Main Theorem}
\newtheorem*{thmA}{Theorem A}
\newtheorem*{thmB}{Theorem B}
\newtheorem*{thmC}{Theorem C}
\newtheorem*{cor*}{Corollary}
\newtheorem*{cora}{Corollary A'}
\newtheorem*{corb}{Corollary B'}
\newtheorem{cor}[lem]{Corollary}
\theoremstyle{definition}
\newtheorem*{defn*}{Definition}
\newtheorem*{ex*}{Example}
\newtheorem*{rem*}{Remark}
\theoremstyle{remark}
\DeclareMathOperator{\dist}{dist}
\DeclareMathOperator{\id}{id}
\newcommand{\C}{\mathbb C}
\newcommand{\D}{\mathbb D}
\newcommand{\clC}{\widehat{\C}}
\newcommand{\chat}{\widehat{\C}}
\newcommand{\clH}{\overline \H}
\newcommand{\R}{\mathbb R}
\newcommand{\N}{\mathbb N}
\newcommand{\Q}{\mathbb Q}
\newcommand{\DD}{\mathcal D}
\newcommand{\NN}{\mathcal N}
\newcommand{\bd}{\partial}
\renewcommand{\Re}{\textup{Re}}
\renewcommand{\H}{\mathbb H}
\newcommand{\HH}{\mathbb H}
\newcommand{\Ch}{\ensuremath{\widehat{\mathbb C}}}
\newcommand{\inter}{{\rm int}}
\newcommand{\ext}{{\rm ext}}
\newcommand{\inv}{^{-1}}
\renewcommand{\ss}{\scriptsize}
\begin{document}

\title[On the connectivity of the Julia sets]{On the connectivity of the Julia sets of meromorphic functions}

\date{\today}

\author{Krzysztof Bara\'nski}
\address{Institute of Mathematics, University of Warsaw,
ul.~Banacha~2, 02-097 Warszawa, Poland}
\email{baranski@mimuw.edu.pl}

\author{N\'uria Fagella}
\address{Departament de Matem\`atica Aplicada i An\`alisi,
Universitat de Barcelona, 08007 Barcelona, Catalonia, Spain}
\email{fagella@maia.ub.es}

\author{Xavier Jarque}
\address{Departament de Matem\`atica Aplicada i An\`alisi,
Universitat de Barcelona, 08007 Barcelona, Catalonia, Spain}
\email{xavier.jarque@ub.edu}

\author{Bogus{\l}awa Karpi\'nska}
\address{Faculty of Mathematics and Information Science, Warsaw
University of Technology, Pl.~Politechniki~1, 00-661 Warszawa, Poland}
\email{bkarpin@mini.pw.edu.pl}

\thanks{The first author is partially supported by
Polish MNiSW Grant N N201 607940. The second and third authors were partially supported by the Catalan grant 2009SGR-792, and by the Spanish grants MTM-2006-05849 and MTM-2008-01486 Consolider (including a FEDER contribution) and MTM2011-26995-C02-02. The fourth author is partially supported  
by Polish MNiSW Grant N N201 607940 and Polish PW Grant 504G 1120 0011 000.}
\subjclass[2000]{Primary 30D05, 37F10, 30D20}

\bibliographystyle{plain}

\begin{abstract} We prove that every transcendental meromorphic map $f$ with a disconnected Julia set has a weakly repelling fixed point. This implies that the Julia set of Newton's method  for finding zeroes of an entire map is connected.  
Moreover, extending a result of Cowen for holomorphic self-maps of the disc, we show the existence of absorbing domains for holomorphic self-maps of hyperbolic regions whose iterates tend to a boundary point. In particular, the results imply that periodic Baker domains of Newton's method for entire maps are simply connected, which solves a well-known open question.
\end{abstract}
\maketitle

\section{Introduction}

Let $f:\C \to \chat$ be a holomorphic map. If the point at infinity is an essential singularity of~$f$, then we call it a transcendental meromorphic map; otherwise $f$ is rational. We consider the dynamical system given by the iterates of $f$, which induces a dynamical partition of the complex plane into two completely invariant sets: the {\em Fatou set} $F(f)$, which is the set of points $z\in\C$, where the family of iterates $\{f^n\}_{n\geq 0}$ is defined and normal in some neighborhood of $z$, and its complement, the {\em Julia set} $J(f)$. The Fatou set is open and consists of points with, in some sense, stable dynamics, while the Julia set is closed and its points exhibit chaotic behavior. If $f$ is transcendental meromorphic (unless $f$ has a unique omitted pole), then the Julia is the closure of the set of all prepoles of $f$ and both  Julia and Fatou sets are unbounded. In any case, $J(f)$ is the closure of the set of repelling periodic points of $f$ (see~\cite{bkl1}). For general background on the 
dynamics of rational and meromorphic maps we refer to \cite{bergweiler,carlesongamelin,milnor}. 

Connected components of the Fatou set, known as {\em Fatou components}, are mapped by $f$ among themselves. A Fatou component $U$ is {\em periodic} of period $p$, or {\em $p$-periodic}, if $f^p(U) \subset U$; a component which is not eventually periodic is called {\em wandering}. Unlike the rational case \cite{sullivan}, transcendental meromorphic maps may have wandering components. There is a complete classification of periodic Fatou components: such a component can either be a rotation domain (Siegel disc or Herman ring), the basin of attraction of an attracting or parabolic periodic point or a {\em Baker domain} (the latter possibility can occur only for transcendental maps). Recall that a $p$-periodic Fatou component $U\subset \C$ is a Baker domain, if $f^{pn}$ on $U$ tend to a point $\zeta$ in the boundary of $U$ as $n \to \infty$, and $f^j(\zeta)$ is not defined for some $j \in\{0,\ldots p-1\}$. This implies the existence of an unbounded Fatou component $U'$ in the same cycle, such that $f^{pn}\to \infty$ on $U'$. 
The first example of a Baker domain was given by Fatou \cite{fatou2}, who considered the function $f(z)=z+1+e^{-z}$ and showed that the right half-plane is contained in an invariant Baker domain. If $f$ is an entire function, then all its Baker domains (and other periodic Fatou components) must be simply connected \cite{baker75}. In the case of meromorphic maps, Baker domains are, in general, multiply connected, as shown in examples by Dominguez \cite{dominguez} and K\"onig \cite{konig}. There are a number of papers studying dynamical properties
of Baker domains, see e.g.~\cite{barfag, faghen,henriksen} for the entire case and \cite{berdrasin,rippon,ripponstallard} for the meromorphic one.

In this paper we study the relation of the connectivity of the Julia set and the existence of weakly repelling fixed points for meromorphic maps. We say that a fixed point $z_0$ of a holomorphic map $f$ is {\em weakly repelling}, if $|f'(z_0)| > 1$ or $f'(z_0) = 1$. It was proved by Fatou~\cite{fatou2} that a rational map of degree greater than one has at least one weakly repelling fixed point in $\clC$. In 1990, Shishikura \cite{shishikura} proved a remarkable result, showing that if $f$ is rational and its Julia is disconnected, then $f$ has at least two weakly repelling fixed points in $\clC$. For transcendental meromorphic maps the situation is more complicated, since they need not have fixed points at all. However, the point at infinity can be treated as an additional ``fixed point''. 

In this paper we prove the following result.

\begin{thm*}
Let $f$ be a transcendental meromorphic function with a disconnected Julia set. 
Then $f$ has at least one weakly repelling fixed point.
\end{thm*}

An important motivation for this theorem is the question of the connectivity of Julia sets of the celebrated Newton's method 
\[
N_g(z)=z - \frac{g(z)}{g'(z)}
\]
of finding zeroes of an entire map $g: \C \to \C$. The dynamical properties of  Newton's method, especially for polynomials $g$, were studied in a number of papers, see e.g.~\cite{hubbardschleicher,manning,mayer,mcmullen1,mcmullen2,przytycki,johannes,tanlei}. Notice that the map $N_g$ is meromorphic, its fixed points in $\C$ are, precisely, zeroes of $g$, and all of them are attracting. For a polynomial $g$, the map $N_g$ is rational and the point at infinity is a repelling fixed point, while for transcendental entire $g$, its Newton's method is transcendental meromorphic. Hence,  Shishikura's result shows that for polynomials $g$, the Julia set of $N_g$ is connected. Our theorem immediately implies the following corollary.

\begin{cor*}
If $g$ is an entire map and $N_g$ is its Newton's method, then $J(N_g)$ is connected.
\end{cor*}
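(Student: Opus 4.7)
The plan is to reduce the statement to a very short case analysis according to whether $g$ is a polynomial or transcendental entire, in each case exploiting one input: Shishikura's theorem in the rational case and the Main Theorem stated above in the transcendental case.

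The key preparatory observation is that every fixed point of $N_g$ in $\C$ is attracting. Indeed, a direct computation shows that fixed points of $N_g$ in $\C$ are precisely the zeros of $g$: if $g(z_0)=0$ has multiplicity $m\ge 1$, then writing $g(z)=(z-z_0)^m h(z)$ with $h(z_0)\ne 0$ gives $N_g'(z_0)=1-1/m\in[0,1)$. In particular, $N_g$ has no weakly repelling fixed point in $\C$.

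In the transcendental case, i.e. when $g$ is transcendental entire, the map $N_g$ is a transcendental meromorphic function, and by the preceding observation it has no weakly repelling fixed point. The contrapositive of the Main Theorem then forces $J(N_g)$ to be connected.

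In the polynomial case, assume $\deg g=d\ge 2$ (the cases $d\le 1$ are trivial or degenerate). Then $N_g$ is rational of degree $d$, and its only fixed point in $\widehat{\C}\setminus\C$ is $\infty$. Using $g(z)=a_d z^d+\cdots$, a direct expansion at infinity yields $N_g(z)=(1-1/d)z+\text{lower order}$, so in the chart $w=1/z$ the multiplier at $\infty$ equals $d/(d-1)>1$. Thus $\infty$ is the unique weakly repelling fixed point of $N_g$ in $\widehat{\C}$, so Shishikura's theorem, which requires at least two such points when the Julia set is disconnected, forces $J(N_g)$ to be connected.

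The argument has no real obstacle: once the Main Theorem is in hand, the only computational content is the standard multiplier calculation $N_g'(z_0)=1-1/m$ at a zero of $g$ of multiplicity $m$, together with the asymptotic $N_g(z)\sim (1-1/d)z$ needed to locate and classify the fixed point at infinity in the polynomial case. The substantive mathematical work is therefore entirely contained in the Main Theorem (and in Shishikura's theorem for the rational case).
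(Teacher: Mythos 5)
Your argument is correct and is essentially the paper's own justification: the finite fixed points of $N_g$ are exactly the zeros of $g$ and are attracting (multiplier $1-1/m$), so Shishikura's theorem settles the polynomial case (where $\infty$ is the unique weakly repelling fixed point, with multiplier $d/(d-1)$), and the Main Theorem settles the transcendental case (no weakly repelling fixed points at all). The only caveat---present in the paper's wording as well---is that for $g=pe^{q}$ with $p,q$ polynomials the map $N_g$ is rational rather than transcendental meromorphic; this case is still covered by Shishikura's theorem, since there $\infty$ is a fixed point of multiplier $1$, hence weakly repelling, and it is the only weakly repelling fixed point.
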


Since the Julia set is closed, it is connected if and only if all the Fatou components are simply connected. Therefore, the proof of the Main Theorem  splits into several cases -- for each type of the Fatou component one should show that if it is multiply connected, then the map has a weakly repelling fixed point. However, Shishikura's proofs in the rational case cannot be directly extended to the transcendental case, because of the appearance of new phenomena such as lack of compactness, presence of asymptotic values and new types of  Fatou components. 

For transcendental meromorphic maps, the case of wandering domains was solved by Bergweiler and Terglane in \cite{berter}, while the case of attracting or parabolic cycles and preperiodic components were dealt with by Fagella, Jarque and Taix\'es in \cite{FJT1,FJT2}. Therefore, the remaining cases were Baker domains and Herman rings, which are the subject of the present work.

The known proofs for a $p$-periodic Fatou component $U$, such that $f^{pn} \to \zeta$ on $U$ as $n\to\infty$ (i.e.~when $U$ is the basin of attraction of an attracting or parabolic periodic point), are based on the existence of a simply connected domain $W \subset U$, which is absorbing for $F = f^p$ and tends to $\zeta$ under iterations of $F$.

\begin{defn*}[\bf Absorbing domain] Let $U$ be a domain in $\C$ and let $F: U \to U$ be a holomorphic map. An $F$-invariant domain $W \subset U$ is {\em absorbing} for $F$, if for every compact set $K \subset U$ there exists $n=n(K) \ge 0$, such that $F^n(K) \subset W$. 
\end{defn*}

The problem of existence of suitable absorbing domains has a long history. For attracting and parabolic basins it is  part of the classical problem of studying the  local behavior of an analytic map near a fixed point. In particular, if $U$ is the basin of a (super)attracting $p$-periodic point $\zeta$, then $F = f^p$ is conformally conjugate to $z\mapsto F'(\zeta) z$ (if $F'(\zeta)\neq 0$) or $z\mapsto z^k$ for some $k\in\N$ (if $F'(\zeta)= 0$) near $z=0$. In this case, if we take $W$ to be the image of a small disc centered at $z=0$ under the conjugating map, then $W$ is a simply connected absorbing domain for $F$ and $\bigcap_{n\geq 0} F^n(W) = \{\zeta\}$. Likewise, if $U$ is a basin of a parabolic $p$-periodic point, an attracting petal in $U$ would provide a similar example. 

The existence of such absorbing regions in  Baker domains was an open question, and one of the main obstacles for the completion of the proof of the Main Theorem.  In this paper we prove that we can always construct suitable absorbing regions in  Baker domains, if we drop the condition of simple connectedness. This is a corollary of the following more general theorem, which we prove in Section~\ref{sec:proof_thmA}.
(By $\DD_U(z, r)$ we denote the disc of radius $r$, centered at $z \in U$, with
respect to the hyperbolic metric in $U$.)

\begin{thmA}[\bf Existence of absorbing regions for holomorphic self-maps of hyperbolic domains]
Let $U$ be a hyperbolic domain in $\C$ and let $F: U \to U$ be a holomorphic
map, such that for some $\zeta$ in the boundary of $U$ in $\clC$ we have
$F^n(z) \to \zeta$ as $n \to
\infty$ for $z \in U$. Then for every point $z \in U$ and every sequence of positive numbers $r_n$, $n \geq 0$ with $\lim_{n\to\infty} r_n = \infty$,
there exists a domain $W\subset U$, such that:
\begin{itemize}
\item[$(a)$]$W \subset \bigcup_{n=0}^\infty \DD_U(F^n(z), r_n)$,
\item[$(b)$] $\overline{W} \setminus \{\zeta\} \subset U$,
\item[$(c)$] $F^n(\overline{W}\setminus \{\zeta\}) =
\overline{F^n(W)}\setminus \{\zeta\} \subset F^{n-1}(W)$ for every $n\geq 1$,
\item[$(d)$] $\bigcap_{n=1}^\infty F^n(\overline{W}\setminus \{\zeta\})
= \emptyset$,
\item[$(e)$] $W$ is absorbing for $F$ in $U$.
\end{itemize}
Moreover, $F|_W$ lifts under a universal covering of $W$ to a univalent map and the induced endomorphism $(F|_W)^*$ of the fundamental group of $W$ is an isomorphism.
\end{thmA}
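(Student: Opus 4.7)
The natural strategy is to lift $F$ to the universal covering $\pi:\D\to U$, apply Cowen's theorem to the lift $\tilde F:\D\to\D$, and then project the resulting region down to $U$. The point is that $\pi$ is a local isometry for the hyperbolic metrics, so $\pi(\DD_{\D}(\tilde F^n(\tilde z),r_n))=\DD_U(F^n(z),r_n)$, which makes it natural to arrange property $(a)$ upstairs. Fix a lift $\tilde z\in\pi^{-1}(z)$; since $F^n(z)\to \zeta\notin U$, the orbit $\tilde F^n(\tilde z)$ must exit every compact subset of $\D$, so by the Denjoy--Wolff theorem there is $p\in\partial\D$ with $\tilde F^n\to p$ locally uniformly on $\D$.

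Cowen's theorem applied to $\tilde F$ yields a simply connected $\tilde F$-invariant domain $\tilde V\subset\D$, a model domain $\Omega$ (a half-plane or a horizontal strip), a conformal isomorphism $\varphi:\tilde V\to\Omega$, and an automorphism $T$ of $\Omega$ (a translation or a dilation) with $\varphi\circ\tilde F|_{\tilde V}=T\circ\varphi$; moreover $\varphi$ extends to a holomorphic semiconjugacy $\D\to\Omega$. In $\varphi$-coordinates the orbit of $\tilde z$ becomes a discrete orbit $T^n(\varphi(\tilde z))$ escaping in $\Omega$, and one builds a simply connected $T$-invariant absorbing region $D\subset\Omega$ as a chain of Euclidean discs $D_n$ centered at these points. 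The key refinement is to tune the radii of the $D_n$ so that $\varphi^{-1}(D_n)\subset\DD_{\D}(\tilde F^n(\tilde z),r_n)$; since $\varphi^{-1}$ does not increase the hyperbolic distance and $T$ is an $\Omega$-hyperbolic isometry, this can be done while keeping the chain $T$-invariant and absorbing. Setting $V:=\varphi^{-1}(D)$ and $W:=\pi(V)$ gives the candidate absorbing region.

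Properties $(a)$ and invariance follow directly from the construction of $V$. Property $(e)$ holds because any compact $K\subset U$ is covered by finitely many small discs each of which lifts isometrically to a compact set of $\D$, and each such compact lift is absorbed by $V$ under $\tilde F$. For the last clause, $\pi|_V:V\to W$ is a universal covering since $V$ is simply connected, $\tilde F|_V$ is univalent (being conjugate via $\varphi$ to the M\"obius map $T$), and the induced action of $(F|_W)_*$ on the deck group $\mathrm{Gal}(V/W)\cong\pi_1(W)$ is an isomorphism, with injectivity coming from the univalence of $\tilde F|_V$ and surjectivity from the fact that a deck transformation downstairs can be lifted and conjugated back through $\tilde F|_V^{-1}$ on $\tilde F(V)$.

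The main technical obstacle is $(b)$, $(c)$, $(d)$, which together say that $W$ approaches $\zeta$ only tangentially. For this one must show that $\overline V\cap\partial\D=\{p\}$ and that $\tilde F^n(\overline V)$ shrinks to $\{p\}$ in $\D\cup\{p\}$; the first is achieved by choosing the chain $D_n$ in $\Omega$ thin enough that $\varphi^{-1}(D)$ clusters on $\partial\D$ only where $\varphi$ degenerates, namely at $p$, using the boundary behavior of the Cowen semiconjugacy, and the second from the fact that $T^n(D)$ leaves every compact subset of $\Omega$ toward the boundary component corresponding to $p$. Property $(c)$ then reduces to a continuity/properness argument: any sequence $w_k\in\overline W\setminus\{\zeta\}$ has a subsequential limit either in $U$, in which case continuity of $F$ gives equality, or equal to $\zeta$, in which case $F(w_k)\to\zeta$ by $(d)$ applied to $\tilde F^{-1}$-preimages, ruling out spurious limit points of $\overline{F^n(W)}$.
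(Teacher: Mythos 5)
Your overall strategy is the same as the paper's: lift to the universal cover, apply Denjoy--Wolff and Cowen, build a simply connected $T$-invariant chain of discs along the orbit in the model domain $\Omega$, pull back by $\varphi^{-1}$ and project by $\pi$, and then get the final clause from univalence of the lift (the paper's Lemma~\ref{lem:cover}). Properties (a) and (e) are indeed routine in this scheme, and you correctly identify (b)--(d) as the real difficulty. But the mechanism you propose for (b)--(d) does not work. Knowing that $\overline{V}$ meets $\partial\D$ only at the Denjoy--Wolff point $p$, or that $T^n(D)$ escapes in $\Omega$, says nothing about the closure of $W=\pi(V)$ in $\clC$: the covering $\pi$ of an arbitrary hyperbolic domain has no continuous extension to $\partial\D$, and the part of $V$ accumulating at $p$ can project onto points of $U$ accumulating at \emph{finite} boundary points of $U$, not at $\zeta$. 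Concretely, the hyperbolic discs $\DD_U(F^n(z),r_n)$ with the \emph{given} radii $r_n\to\infty$ may well contain points arbitrarily close to $\partial U\setminus\{\zeta\}$ (a priori one could even get $W=U$, as the paper remarks), and you cannot simply take the chain ``thin enough'' because the radii must still tend to infinity to obtain the absorbing property (e). Your sketch of (c) (``apply (d) to $\tilde F^{-1}$-preimages'') is circular, since (c) and (d) are exactly what is at stake.

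What is missing is a quantitative cap on the radii tied to the escape rate of the orbit, which is the heart of the paper's proof: there one feeds into the disc-chain construction not $r_n$ but $a_n=\tfrac12\min\bigl(r_n,\tfrac{c}{2}\inf_{k\ge n}\log\log|F^k(z_0)|\bigr)$, and combines this with the lower bound $\varrho_U(u)>c/(|u|\log|u|)$ for large $|u|$ (Lemma~\ref{lem:dist}) and an integral estimate along curves realizing the hyperbolic distance. This yields the explicit bound $|u|\ge e^{\sqrt{\log|F^{n+j}(z_0)|}}$ for every $u\in\overline{F^j(W)}$, which simultaneously forces any limit point other than $\zeta=\infty$ to come from a bounded range of indices $n_k$ (hence to lie in $\pi(\varphi^{-1}(\overline A))\subset U$, giving (b) and the equality in (c)) and forces $F^j(\overline W)$ to leave every bounded set as $j\to\infty$, giving (d). Without an estimate of this kind relating hyperbolic radii around $F^n(z_0)$ to Euclidean moduli, your construction does not rule out $\overline{W}$ clustering on finite boundary points, so the proof as proposed has a genuine gap at its central step.
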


This theorem is an extension of the well-known Cowen's result \cite{cowen} on absorbing regions for holomorphic self-maps of simply connected domains. Recall that if $G$ is a holomorphic self-map of the right-half plane $\H$ without fixed points, then Denjoy--Wolff's theorem ensures that (after a possible change of variables) $G^n\to \infty$ uniformly on compact sets in $\H$. Cowen's result (see Theorem~\ref{thm:cowen}) implies the existence of a simply connected absorbing domain $V\subset\H$, such that $\overline{V} \subset  \H$, $G^n(\overline{V}) = \overline{G^n(V)} \subset G^{n-1}(V)$ for $n\geq 1$ and $\bigcap_{n\geq 0} G^n(\overline{V}) = \emptyset$. Moreover, there exists a univalent map $\varphi:V \to \C$ conjugating $G$ to a map $T$ of the form $T(\omega) = \omega +1$, $T(\omega) = a\omega$, $a > 1$ or $T(\omega) = \omega \pm i$ and $\varphi$ extends to a holomorphic map from $\H$ to $\C$ or $\H$, which semi-conjugates $G$ to $T$ (see Theorem~\ref{thm:cowen} for details). 
Using the Riemann Mapping Theorem, one can apply this result to a holomorphic self-map $F$ of any simply connected region $U$, without fixed points.

Applied to the case of Baker domains, Theorem~A has the following form.

\begin{cora}[\bf Existence of absorbing regions in Baker domains] 
Let $f: \C \to \clC$ be a meromorphic map and let $U$ be a periodic Baker domain of period $p$. Then there exists a domain $W \subset U$ with the
properties listed in Theorem~{\rm A} for the map $F = f^p$.
\end{cora}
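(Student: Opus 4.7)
The plan is a straightforward application of Theorem~A to the first-return map $F = f^p$ on $U$. To invoke Theorem~A, I need to check its three hypotheses: that $U$ is a hyperbolic domain in $\C$, that $F$ is a holomorphic self-map of $U$, and that the iterates $F^n$ converge on $U$ to a point $\zeta \in \partial U$ in $\clC$. Once these are verified, conclusions $(a)$--$(e)$ together with the lifting statement transfer verbatim, which is precisely the content of Corollary~A'.

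For the first hypothesis, observe that because $f$ is transcendental meromorphic, the Julia set $J(f)$ is perfect and hence infinite, so $\C \setminus U \supset J(f)$ contains at least two points and $U$ carries a hyperbolic metric. For the second, $p$-periodicity of $U$ means $f^p(U) \subset U$, and along the way no intermediate iterate $f^j(z)$ for $z \in U$ and $1 \le j \le p-1$ can equal $\infty$ (otherwise $f^p(z)$ would be undefined), so $F = f^p$ is holomorphic on all of $U$ and maps $U$ into itself.

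For the third hypothesis, the defining property of a Baker domain recalled in the introduction states exactly that $f^{pn}|_U$ tends to some $\zeta \in \partial U$, where $\zeta$ is either $\infty$ or a prepole of $f$; in either case $\zeta \in \partial U$ in $\clC$ and $F^n(z) \to \zeta$ for every $z \in U$.

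With all hypotheses in place, Theorem~A produces a domain $W \subset U$ satisfying properties $(a)$--$(e)$ together with the conclusion on the universal cover, yielding the corollary. There is no real obstacle here: the substantive work is entirely in the proof of Theorem~A (which extends Cowen's result from the simply connected setting to arbitrary hyperbolic domains), and Corollary~A' is merely the observation that every periodic Baker domain fits the abstract framework of Theorem~A.
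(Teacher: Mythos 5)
Your proposal is correct and matches the paper's treatment: Corollary~A' is stated there as an immediate application of Theorem~A, with exactly the verifications you give (the Fatou component $U$ avoids the infinite Julia set and is hence hyperbolic, all iterates are defined and finite on the Fatou set so $F=f^p$ is a holomorphic self-map of $U$, and the definition of a Baker domain supplies $F^n\to\zeta\in\partial U$). No further argument is needed, since all the substance lies in Theorem~A itself.
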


Note that if $U$ is a simply connected Baker domain (which is always the case for entire maps), Cowen's Theorem immediately provides the existence of a suitable simply connected absorbing region in $U$. In the case of a multiply connected $p$-periodic Baker domain $U$ of a meromorphic map $f$, one can consider a universal covering map $\pi:\H \to U$ and lift $F = f^p$ by $\pi$ to a holomorphic map $G:\H \to \H$ without fixed points. K\"onig \cite{konig} showed that if $f$ has finitely many poles, then the absorbing region $V \subset\H$ projects under $\pi$ to a suitable simply connected absorbing region $W \subset U$ (see Theorem~\ref{thm:konig} for a precise statement).
However, \cite{konig} contains examples showing that there are Baker domains which do not admit simply connected absorbing regions.

Hence, Corollary~A' can be treated as a generalization of K\"onig's result, which weakens the assumptions on the map $f$ and provides some estimates on the size of the absorbing region, but does not ensure simple connectivity of $W$. 

Using Corollary~A', we are able to prove:

\begin{thmB}
\label{thm:baker}
Let $f$ be a transcendental meromorphic map with a multiply connected periodic   Baker domain. Then $f$ has at least one weakly repelling fixed point.
\end{thmB}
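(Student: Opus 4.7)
The plan is to use Corollary~A' to set up a Jordan curve with good dynamical properties inside the multiply connected periodic Baker domain $U$, and then to invoke a Shishikura-type fixed point theorem to produce a weakly repelling fixed point of $f$. First I would apply Corollary~A' to the return map $F = f^p$ to obtain an absorbing domain $W \subset U$ satisfying properties (a)--(e) of Theorem~A. The crucial features are that $F|_W$ lifts to a univalent map on the universal cover and induces an isomorphism of $\pi_1(W)$, while (b)--(d) give tight geometric control: $\overline{W}\setminus\{\zeta\}\subset U$ and the iterates $F^n(\overline{W}\setminus\{\zeta\})$ form a decreasing chain with empty intersection, so we can localize arguments well away from the accumulation point $\zeta$ and from the singular structure of $f$ at infinity.

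The next step is to find a Jordan curve $\gamma \subset W$ which is non-null-homotopic in $W$ and whose bounded complementary component $D \subset \C$ meets $\widehat{\C}\setminus U$. Since $U$ is multiply connected such a curve exists inside $U$; using the absorbing property together with the isomorphism $(F|_W)_*$ on $\pi_1(W)$, some iterate of it can be pushed into $W$ while remaining essential there. By property~(c) all forward iterates $F^k(\gamma)$ then lie inside $W$ and remain non-contractible, producing a nested family of topological discs $D_k$ in $\C$, each enclosing a point of $\widehat{\C}\setminus U$.

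With this geometric picture in hand I would invoke a transcendental version of Shishikura's fixed point theorem, in the spirit of the constructions already developed by Bergweiler--Terglane~\cite{berter} and Fagella--Jarque--Taix\'es~\cite{FJT1,FJT2}. The key observation is that $\gamma$ surrounds a component of $\widehat{\C}\setminus U$ which contains either a pole of some iterate of $f$ or a preimage structure amenable to an index argument. Analyzing the connected components of $f^{-1}(\gamma)$ lying inside $D$, one isolates a Jordan subdomain on which $f$ acts with a topological degree that, by a standard argument principle calculation, forces a fixed point of $f$ with $|f'|>1$ or $f'=1$. A careful tracking around the cycle $U, f(U),\dots, f^{p-1}(U)$ ensures that this fixed point is of $f$ itself, not merely of $F=f^p$.

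The main obstacle, and the reason this extension of Shishikura's rational argument is non-trivial, lies in the transcendental features: infinitely many poles potentially accumulating near $\gamma$, asymptotic values on $\partial U$, and the possible unboundedness of $U$. The role of Corollary~A' is precisely to overcome these: its conclusions (b)--(d) let us choose $\gamma$ arbitrarily deep inside $W$, so that $D$ is well separated from the problematic singular structure of $f$, and the preimage analysis can be carried out cleanly to yield a fixed point of $f$ with the required multiplier bound.
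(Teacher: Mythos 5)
There is a genuine gap at the heart of your construction: you assume that the multiple connectivity of the Baker domain can be transferred into the absorbing domain, i.e.\ that some iterate of an essential curve of $U$ can be ``pushed into $W$ while remaining essential there''. Neither Corollary~A' nor the isomorphism $(F|_W)^*$ of $\pi_1(W)$ gives this: the isomorphism only says $F$ does not kill homotopy classes \emph{of curves already in $W$}, and it says nothing about the inclusion $W\hookrightarrow U$ on fundamental groups. In fact $W$ can perfectly well be simply connected while $U$ is multiply connected --- K\"onig's Theorem~\ref{thm:konig} produces exactly such simply connected absorbing domains whenever $f$ has finitely many poles --- so in that situation there is no essential curve in $W$ at all and your nested discs $D_k$ do not exist. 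This is not a technicality: the paper's proof splits into two cases according to the connectivity of $W$, and the simply connected case is the harder half. There one cannot keep the curve inside $W$; instead one works with $\Gamma=\bigcup_n f^n(\gamma)$ for a curve $\gamma\subset U_0$ surrounding a pole (Lemma~\ref{poles-in-holes}, Lemma~\ref{lem:U_0}), uses the simple connectivity of $W$ to bound the set $\NN$ of times the pole stays surrounded, and then, when $f^m(\bd\Omega)$ touches $\bd\Omega$, one must shrink $\Omega$ to a slightly smaller $\Omega'$ using the \emph{strict} inequality in Schwarz--Pick's Lemma; the strictness itself is justified via Lemma~\ref{lem:cover}, again using that $W$ is simply connected. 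None of this is captured, or replaceable, by your outline.

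A secondary weakness is the appeal to ``a transcendental version of Shishikura's fixed point theorem'' and ``a standard argument principle calculation''. The actual mechanism in the paper is not an index computation on $f^{-1}(\gamma)$: it is the machinery of Section~\ref{section:configurations} --- Lemma~\ref{mapout} and Corollaries~\ref{cor:mapout1}--\ref{cor:mapout2} (built on Buff's rational-like fixed point theorems) together with Proposition~\ref{mapin}, a genuine generalization of Shishikura's Theorem~\ref{shishikura} --- applied after a careful case analysis of the relative positions of $\bd\Omega, f(\bd\Omega),\dots$ (Lemma~\ref{lem:U_0} and the dichotomies in the proof). Even granting your essential curve, one still needs this configuration analysis to decide which tool applies and to obtain a fixed point of $f$ itself (the tools are applied to $f$, not to $F=f^p$); asserting that a degree count ``forces a fixed point with $|f'|>1$ or $f'=1$'' skips precisely the content of those results. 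So the proposal would need both the missing simply-connected-$W$ case and a concrete replacement for the fixed point machinery before it could be considered a proof.
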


In particular, Theorem~B implies:

\begin{corb}
Periodic Baker domains of a Newton's method $N_g$ for an entire map $g$ are simply connected.
\end{corb}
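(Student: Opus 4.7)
The plan is to apply Theorem~B contrapositively, by checking that $N_g$ has no weakly repelling fixed points at all. If $g$ is a polynomial, then $N_g$ is rational and admits no Baker domains, since Baker domains only occur for transcendental maps; the statement is then vacuous. Assume therefore that $g$ is transcendental entire. Then $N_g$ is transcendental meromorphic, so $\infty$ is an essential singularity of $N_g$ and in particular not a fixed point of $N_g$ in the sense of the definition of weakly repelling.

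It remains to rule out weakly repelling fixed points in $\C$. From the formula $N_g(z) = z - g(z)/g'(z)$ one reads off that $z_0 \in \C$ is a fixed point of (the meromorphic extension of) $N_g$ if and only if $g(z_0) = 0$: if $g'(z_0)\neq 0$ this is immediate, while at a multiple zero the quotient $g/g'$ has a removable singularity with value $0$ at $z_0$. For a zero of multiplicity $m\geq 1$, the local expansion $g(z) = c(z-z_0)^m + O((z-z_0)^{m+1})$ produces
\[
N_g(z) = z_0 + \left(1 - \tfrac{1}{m}\right)(z-z_0) + O((z-z_0)^2),
\]
so that $N_g'(z_0) = (m-1)/m \in [0,1)$. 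Hence every finite fixed point of $N_g$ is attracting, in particular not weakly repelling.

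Combining the two paragraphs, $N_g$ has no weakly repelling fixed points. Theorem~B then forbids $N_g$ from having any multiply connected periodic Baker domain, which is the claim. The only real obstacle is Theorem~B itself; once it is available, the deduction of the corollary reduces to the standard multiplier computation above together with the observation that $\infty$ ceases to be a fixed point once one passes from polynomial to transcendental entire $g$.
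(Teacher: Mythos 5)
Your argument is exactly the paper's intended derivation: Theorem~B applied contrapositively, using the observation (stated in the paper's introduction) that the fixed points of $N_g$ in $\C$ are precisely the zeros of $g$ and are all attracting, with multiplier $(m-1)/m$ at a zero of multiplicity $m$, so no weakly repelling fixed point exists. One small slip: it is not true that $g$ transcendental entire forces $N_g$ to be transcendental meromorphic — for $g = Pe^{Q}$ with $P,Q$ polynomials (e.g.\ $g(z)=e^z$, where $N_g(z)=z-1$) the map $N_g$ is rational; but in that case $N_g$ has no Baker domains at all, so the statement is vacuous there for the same reason as in your polynomial case, and the correct dichotomy to run is on whether $N_g$ itself is rational or transcendental. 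With that adjustment the proof is complete and coincides with the paper's.
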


This solves a well-known open question, raised e.g.~by Bergweiler, Buff, R\"uckert, Mayer and Schleicher \cite{bergweiler2,buffruc,mayer,johannes}. In particular, Corollary~B' implies that so-called virtual immediate basins for Newton maps (i.e.~invariant simply connected unbounded domains in $\C$, where the iterates of the map converge locally uniformly to $\infty$), defined by Mayer and Schleicher \cite{mayer}, are equal to the entire invariant Baker domains.

Apart from Corollary~A', the proof of Theorem~B uses several general results on the existence of weakly repelling fixed points of meromorphic maps on some domains in the complex plane, under certain combinatorial assumptions. These tools, which are developed in Section~\ref{section:configurations}, have some interest in themselves, since they generalize the results used by Shishikura, Bergweiler and Terglane \cite{berter,shishikura} and can be applied in a wider setup. In particular, we use them to prove the following result, which completes the proof of the Main Theorem.

\begin{thmC}
\label{thm:herman}
Let $f$ be a transcendental meromorphic map with a cycle of Herman rings. Then $f$ has at least one weakly repelling fixed point.
\end{thmC}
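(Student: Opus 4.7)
The plan is to extract, from the cycle of Herman rings, a combinatorial configuration of meromorphic mappings between bounded Jordan domains, and then to apply one of the weakly-repelling-fixed-point criteria developed in Section~\ref{section:configurations}. This follows the scheme introduced by Shishikura in the rational setting, but carried out through the more flexible tools of Section~\ref{section:configurations} so as to accommodate transcendental features. Concretely, I would write $U_0,\ldots,U_{p-1}$ for the cycle with $f(U_i)=U_{i+1\bmod p}$, and use the linearizing coordinates on each $U_i$ to pick an $f^p$-invariant real-analytic Jordan curve $\gamma_i\subset U_i$, arranged so that $f(\gamma_i)=\gamma_{i+1\bmod p}$ and $f^p|_{\gamma_0}$ is conjugate to an irrational rotation. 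Let $D_i$ be the bounded component of $\C\setminus\gamma_i$. Since $f$ is orientation-preserving and $f|_{U_i}\colon U_i\to U_{i+1}$ is a biholomorphism, $f$ sends a one-sided neighborhood of $\gamma_i$ on the $D_i$-side onto one on the $D_{i+1}$-side.

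A first key step is to show that some $D_i$ must contain a pole of $f$. Otherwise each $f\colon\overline{D_i}\to\overline{D_{i+1}}$ would be a proper holomorphic map of boundary degree one -- hence a biholomorphism by the maximum principle -- and iterating, $f^p|_{D_0}$ would be a biholomorphism of $D_0$. Via the Riemann map it would be conjugate to an elliptic Mobius transformation of $\D$ (an irrational rotation around the interior fixed point, the rotation number being inherited from $\gamma_0$), so the family $\{f^{pn}\}$ would be normal on $D_0$. This contradicts the fact that the inner boundary component of $U_0$ lies in $D_0\cap J(f)$. Once a pole of $f$ is known to lie in some $D_{i_0}$, the branched cover $f\colon\overline{D_{i_0}}\to\widehat{\C}$ has boundary degree one and total degree $k+1$ over $D_{i_0+1}$, where $k\ge 1$ counts the poles of $f$ in $D_{i_0}$ with multiplicity.

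The resulting cyclic configuration -- formed by the discs $D_i$, their images, the complementary domains in $\widehat{\C}\setminus\bigcup_i\overline{D_i}$, and the pole data -- provides the hypotheses of a suitable combinatorial result from Section~\ref{section:configurations}, producing a weakly repelling fixed point of $f$. The main obstacle is that a direct argument-principle computation on a single $D_i$ only furnishes fixed points of $f^p$, and for $p>1$ these are periodic points of $f$ of period $p$ rather than fixed points of $f$ itself. To obtain a genuine fixed point of $f$, the Section~\ref{section:configurations} machinery must be applied to the \emph{full} cyclic configuration simultaneously, exploiting also the transcendental nature of $f$ near $\infty$ (where Picard's theorem yields near-surjectivity on any neighborhood of the essential singularity) and arranging the combinatorial data so as to force the desired weakly repelling fixed point of $f$ rather than of $f^p$.
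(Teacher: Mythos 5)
There is a genuine gap, and it sits exactly where your proposal stops being specific. Your setup (the invariant curve $\gamma_0$, its images $\gamma_j=f^j(\gamma_0)$, and the observation that some bounded disc $D_{i_0}$ must contain a pole --- your normality argument for this is a legitimate alternative to the paper's Lemma~\ref{poles-in-holes}) matches the beginning of the paper's argument. But the sentence ``the resulting cyclic configuration provides the hypotheses of a suitable combinatorial result from Section~\ref{section:configurations}'' is precisely the claim that needs proof, and it is false as stated: no single result of Section~\ref{section:configurations} covers all configurations that can occur. The paper has to run a case-by-case elimination on the relative positions of the curves $\sigma_j$ (using Lemma~\ref{mapout}, Corollary~\ref{cor:mapout1}, Corollary~\ref{cor:mapout2} and Proposition~\ref{mapin}) and, after all those cases are disposed of, is left with one residual configuration --- all $\sigma_1,\dots,\sigma_{p-1}$ pairwise disjoint inside $\inter(\sigma_0)$, with $f(\inter(\sigma_{p-1}))=\overline{\ext(\sigma_0)}$ --- in which \emph{none} of the combinatorial criteria applies. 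This configuration just misses the hypotheses of Shishikura's Theorem~\ref{shishikura}: one needs $f^{k}(\overline{V_1})\subset V_0$, whereas here $f^{k}(\overline{V_1})=\overline{V_0}$, because $f^p(\gamma_0)=\gamma_0$ exactly (there is no strict contraction anywhere in a rotation domain). Your proposal offers nothing to handle this case; the paper handles it by a quasiconformal surgery (gluing a rigid rotation on $\ext(\sigma_0)$ via the linearizing coordinate) that converts the Herman ring cycle into a Siegel disc cycle of a rational map $g$, invokes Fatou's Theorem~\ref{thm:fatouwrfp} for $g$, and transports the weakly repelling fixed point back through the conformal conjugacy. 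Some substitute for this surgery step is indispensable.

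Two further points. First, your closing paragraph misdiagnoses the difficulty: the tools of Section~\ref{section:configurations} are already designed to produce fixed points of $f$ itself (not of $f^p$) from a domain and its successive $f$-images, so the ``$f^p$ versus $f$'' issue is not the obstruction; the obstruction is the exact invariance $f^p(\gamma_0)=\gamma_0$, which destroys the compact-containment or boundary-contact hypotheses those tools require. Second, the appeal to Picard's theorem and ``near-surjectivity near $\infty$'' plays no role here and would not supply the missing step; the final configuration is entirely local to the bounded discs bounded by the $\sigma_j$.
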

The proof of Theorem~C applies also to the rational setting and is an alternative to Shi\-shi\-kura's arguments for Herman rings of rational maps.


The paper is organized as follows. In Section~\ref{sec:background} we state and reference some results we use in this paper. They include some estimates of the hyperbolic metric, the theorems of Cowen and K\"onig on the existence of absorbing domains and the results of Buff and Shishikura on the existence of weakly repelling fixed points for holomorphic maps. Section~\ref{sec:proof_thmA} contains the proof of Theorem~A. The proofs of Theorems~B and~C are contained, respectively, in Sections~\ref{sec:proof_thmB} and~\ref{sec:proof_thmC}, with an initial Section~\ref{section:configurations} which contain our preliminary results on the existence of weakly repelling fixed points in various configurations of domains. 

\subsection*{Acknowledgements} We wish to thank the Institut de Matem\`atiques de la Universitat de Barcelona for its hospitality.

\section{Background and tools} \label{sec:background}

In this section we introduce notation and review the necessary background to
prove the main results of this paper. 

First, we present basic notation. 
The symbol $\dist(\cdot, \cdot)$ denotes the Euclidean distance on the complex plane $\C$. For a set $A
\subset\C$, the symbols $\overline{A}$, $\bd A$ denote, respectively, the closure 
and boundary in $\C$.
The Euclidean disc of radius $r$ centered at $z \in\C$  and the right half-plane are denoted, respectively, 
by $\D(z,r)$ and $\HH$. The unit disc $\D(0,1)$ is simply written as $\D$.

For clarity of exposition we divide this section into three parts. The first one contains standard estimates of hyperbolic metric. In the second and third one we present, respectively, some known results on the existence of absorbing domains and weakly repelling fixed points for holomorphic maps. 

\subsection{Hyperbolic metric and Schwarz--Pick's Lemma}

Let $U$ be a domain in the Riemann sphere $\clC$. We call $U$ hyperbolic, if its boundary in $\clC$ contain at
least three points. By the Uniformization Theorem, in this case there exists a
universal holomorphic covering $\pi$ from $\D$ (or $\HH$) onto $U$. Every holomorphic map $F: U \to U$ can be lifted by $\pi$ to a holomorphic map $G: \H \to \H$, such that the diagram
\[
\begin{CD}
\HH @>G>> \HH\\
@VV\pi V @VV\pi V\\
U @>F>> U
\end{CD}
\]
commutes. By
$\varrho_U(\cdot)$ and $\varrho_U(\cdot, \cdot)$ we denote, respectively, the
density of the hyperbolic metric and the hyperbolic distance in $U$. In particular, 
we will extensively use the hyperbolic metric in $\D$ and $\HH$ of density
\[
\varrho_\D(z) = \frac{2}{1 - |z|^2} \quad \text{and} \quad \varrho_\HH(z) = \frac{1}{\Re(z)},
\]
respectively. By $\DD_U(z, r)$ we denote the hyperbolic disc of radius $r$, centered at $z \in U$ (with
respect to the hyperbolic metric in $U$). The following lemma contains well-known inequalities related to the hyperbolic metric. 

\begin{lem}[\bf{Hyperbolic estimates I {\cite[Theorem 4.3]{carlesongamelin}}}] \label{lem:CG} Let $U \subset \C$ be a
hyperbolic domain. Then 
\[
\varrho_U(z) \leq \frac{2}{\dist(z, \bd U)} \qquad \text{for } z \in U
\]
and
\[
\varrho_U(z) \geq \frac{1 + o(1)}{\dist(z, \bd U) \log (1/\dist(z, \bd
U))}\qquad \text{as } z \to \bd U. 
\]
Moreover, if $U$ is simply connected, then
\[
\varrho_U(z) \geq \frac{1}{2\dist(z, \bd U)}\qquad \text{for } z \in U.
\]
\end{lem}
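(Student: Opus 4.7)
The plan is to prove the three bounds separately, in each case reducing to a domain whose hyperbolic density is known explicitly and then applying Schwarz--Pick's domain monotonicity: if $U \subset V$ are hyperbolic, then $\varrho_U(z) \geq \varrho_V(z)$ for $z\in U$.

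First I would handle the upper bound. Fix $z \in U$ and set $r = \dist(z, \bd U)$, so that $\D(z,r) \subset U$. By monotonicity, $\varrho_U(z) \leq \varrho_{\D(z,r)}(z)$. A direct change of variables $\zeta = (w-z)/r$ identifies $\D(z,r)$ conformally with $\D$ and pushes the formula $\varrho_\D(0) = 2$ to $\varrho_{\D(z,r)}(z) = 2/r$, which is exactly the claimed inequality.

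Next I would treat the simply connected case. Under that hypothesis the universal covering $\pi \colon \D \to U$ is a Riemann map; take it normalized so $\pi(0) = z$. Since $\varrho_U(z)|\pi'(0)| = \varrho_\D(0) = 2$, it suffices to bound $|\pi'(0)|$ from above in terms of $\dist(z,\bd U)$. This is precisely Koebe's $1/4$-theorem: $\pi(\D) \supset \D(z, |\pi'(0)|/4)$, so $|\pi'(0)| \leq 4\dist(z, \bd U)$, which yields $\varrho_U(z) \geq 1/(2\dist(z,\bd U))$.

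The asymptotic lower bound near $\bd U$ will be the main obstacle, because the factor $1 + o(1)$ is sharp and one needs a genuinely local comparison. My plan is: given $z \in U$ close to $\bd U$, pick $w \in \bd U \cap \C$ with $|z-w| = \dist(z,\bd U)$ and, using that $\bd U$ has at least three points in $\clC$, pick a second boundary point $w' \in \clC$ at positive distance, normalized (via translation and, if $w' \neq \infty$, scaling) so that $w = 0$ and $w' \in \{1,\infty\}$. Then $U \subset \C\setminus\{0,w'\}$, and domain monotonicity gives $\varrho_U(z) \geq \varrho_{\C\setminus\{0,w'\}}(z)$. I would then invoke the classical asymptotic for the hyperbolic density of a twice-punctured plane (equivalently, near a puncture, the comparison with the punctured disc density $\varrho_{\D^*}(\zeta) = 1/(|\zeta|\log(1/|\zeta|))$ is sharp up to $1+o(1)$). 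Since $|z - w| \to 0$ while the distance to $w'$ stays bounded away from zero, $\log(1/|z-w|)$ dominates all bounded logarithmic corrections and the comparison collapses to the claimed asymptotic; any dependence of $w,w'$ on $z$ only affects the $o(1)$ term.

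The subtlety I would be most careful about is the last point: the nearest boundary point $w = w_z$ genuinely moves with $z$, so the comparison domain $\C\setminus\{0,w'\}$ changes too. I would therefore record the inequality in the scale-free form $\varrho_U(z) \geq \varrho_{\C\setminus\{0,1\}}((z-w)/(w'-w))\,/\,|w'-w|$ and exploit that $|w'-w|$ stays bounded in a neighborhood of any fixed compact piece of $\bd U$, so that $\log(|w'-w|/|z-w|) = \log(1/\dist(z,\bd U)) + O(1)$, which is exactly what is needed to absorb the ambient constants into $1+o(1)$.
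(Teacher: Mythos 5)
Since the paper never proves this lemma (it is quoted from Carleson--Gamelin, Theorem 4.3 there), your proposal can only be compared with the standard textbook argument, and your first two parts are exactly that argument, carried out correctly: the upper bound follows from $\D(z,r)\subset U$ with $r=\dist(z,\bd U)$ and $\varrho_{\D(z,r)}(z)=2/r$, and the simply connected lower bound from Koebe's $1/4$-theorem applied to the Riemann map normalized at $z$. Quoting the classical asymptotics of $\varrho_{\C\setminus\{0,1\}}$ near the puncture is also consistent with the paper's own level of citation (it invokes those ``standard estimates'' in the proof of Lemma~\ref{lem:dist}).

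The third estimate is where there is a genuine gap, and you have half-noticed it yourself. Two smaller points first: if your second boundary point is $w'=\infty$, the comparison domain $\C\setminus\{0,\infty\}$ is not hyperbolic, so that branch of the argument is empty; you must take $w'$ finite, which is always possible because a hyperbolic plane domain has at least two finite boundary points. Moreover ``at positive distance'' is not enough: since $w=w_z$ moves with $z$, you need $|w'-w_z|$ bounded below uniformly, so that $(z-w_z)/(w'-w_z)$ actually tends to $0$ and the asymptotics of $\varrho_{\C\setminus\{0,1\}}$ near the puncture applies; the standard fix is to fix two finite boundary points $b_1\neq b_2$ once and for all and always use the one with $|b_i-w_z|\geq|b_1-b_2|/2$. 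The serious issue is your last sentence: restricting to ``a fixed compact piece of $\bd U$'' is not a harmless bookkeeping device. If ``as $z\to\bd U$'' is read as $\dist(z,\bd U)\to 0$ with no further restriction (so $z$ may escape to infinity along an unbounded boundary), then no argument can close the gap, because the inequality is then false: take $U=\C\setminus\bigl(\{0\}\cup\{2^n:n\geq 1\}\bigr)$ and $z_n=2^n+2^{-n}$, so $\dist(z_n,\bd U)=2^{-n}$; since $\D(2^n,2^{n-1})\setminus\{2^n\}\subset U$, Schwarz--Pick gives $\varrho_U(z_n)\leq \frac{1}{2^{-n}(2n-1)\log 2}$, whereas the claimed bound would force $\varrho_U(z_n)\geq\frac{1+o(1)}{2^{-n}\,n\log 2}$, and the ratio of these tends to $1/2$. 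So the asymptotic must be understood locally --- $z$ tending to the boundary within a bounded set, equivalently locally uniformly near finite boundary points --- which is also how it is used in the paper (Lemma~\ref{lem:dist} treats the behaviour near $\infty$ separately, by inverting $\C\setminus\{0,1\}$). Under that reading, your argument, amended by the two fixes above, is correct and coincides with the classical proof.
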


Every holomorphic map between hyperbolic domains does not increase the hyperbolic metric. This very useful result is known as the Schwarz--Pick Lemma.

\begin{lem}[\bf{Schwarz--Pick's Lemma {\cite[Theorem 4.1]{carlesongamelin}}}] \label{lemma:schwarz_pick}
Let $U, V \subset \C$ be hyperbolic domains and
let $f: U \to V$ be a holomorphic map. Then
\[
\varrho_V(f(z_1), f(z_2)) \leq \varrho_U(z_1, z_2)
\]
for every $z_1, z_2 \in U$. In particular, if $U \subset V$, then
\[
\varrho_V(z_1, z_2) \leq \varrho_U(z_1, z_2),
\]
with strict inequality unless $f$ lifts to a M\"obius transformation from
$\HH$ onto $\HH$.
\end{lem}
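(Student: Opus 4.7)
The plan is to reduce everything to the classical Schwarz lemma on the unit disc and then transfer to arbitrary hyperbolic domains via the Uniformization Theorem. First I would establish the inequality in the case $U = V = \D$. Given $z_1, z_2 \in \D$, choose M\"obius automorphisms $\varphi, \psi$ of $\D$ with $\varphi(0) = z_1$ and $\psi(f(z_1)) = 0$, and consider $h := \psi \circ f \circ \varphi$, which is a holomorphic self-map of $\D$ fixing the origin. The classical Schwarz lemma yields $|h(w)| \leq |w|$ for every $w \in \D$. Since M\"obius automorphisms of $\D$ are $\varrho_\D$-isometries, evaluating at $w = \varphi^{-1}(z_2)$ and rewriting in hyperbolic distance gives $\varrho_\D(f(z_1), f(z_2)) \leq \varrho_\D(z_1, z_2)$.

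Next I would promote this to arbitrary hyperbolic $U, V$. Pick universal coverings $\pi_U: \D \to U$ and $\pi_V: \D \to V$, and, using simple-connectivity of $\D$, lift $f \circ \pi_U$ to a holomorphic map $\tilde f: \D \to \D$ satisfying $\pi_V \circ \tilde f = f \circ \pi_U$. By definition $\varrho_U$ is the push-down of $\varrho_\D$ under $\pi_U$, so $\varrho_U(z_1, z_2)$ equals the infimum of $\varrho_\D(w_1, w_2)$ over preimages $w_i \in \pi_U^{-1}(z_i)$, an infimum attained (fix any $w_1 \in \pi_U^{-1}(z_1)$; the fiber $\pi_U^{-1}(z_2)$ is discrete and closed, so the nearest preimage in $\varrho_\D$ exists). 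Choosing such minimizing lifts and applying the disc case to $\tilde f$ gives $\varrho_\D(\tilde f(w_1), \tilde f(w_2)) \leq \varrho_\D(w_1, w_2)$. Since $\pi_V(\tilde f(w_i)) = f(z_i)$, the left-hand side is at least $\varrho_V(f(z_1), f(z_2))$, which combines to give the main inequality.

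The particular case $U \subset V$ is obtained by applying this inequality to the holomorphic inclusion $U \to V$. For the rigidity statement, I would invoke the equality case of Schwarz's lemma: if $|h(w_0)| = |w_0|$ for some $w_0 \neq 0$, then $h$ is a rotation, so the disc-case map is a M\"obius automorphism of $\D$. Tracing this through the lifts, equality $\varrho_V(z_1, z_2) = \varrho_U(z_1, z_2)$ at a single pair forces the lifted inclusion $\tilde \iota: \D \to \D$ to preserve $\varrho_\D$-distance at the chosen pair of lifts, and hence, by the rigidity from Schwarz, to be a global $\varrho_\D$-isometry, that is, a M\"obius transformation of $\D$ (equivalently $\H$) onto itself.

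The step I expect to be most delicate is the rigidity assertion: one must verify that the infimum defining $\varrho_U$ is really attained at the chosen lifts and that equality at a \emph{single} pair of points in $U$ propagates via Schwarz's lemma to a global statement about the lifted map. Everything else is a routine combination of the Uniformization Theorem with the classical Schwarz lemma, so the rigidity portion is where the argument requires the most care.
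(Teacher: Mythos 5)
The paper offers no proof of this lemma at all: it is quoted as a background result, with a citation to Carleson--Gamelin, so there is nothing internal to compare against line by line. Your argument is the standard textbook proof and is correct: the disc case via the classical Schwarz lemma, the transfer to arbitrary hyperbolic domains by lifting $f\circ\pi_U$ through $\pi_V$ (possible because $\D$ is simply connected), the identity $\varrho_U(z_1,z_2)=\min_{w_2\in\pi_U^{-1}(z_2)}\varrho_\D(w_1,w_2)$ for a fixed lift $w_1$, and the single-pair rigidity: equality in the chain $\varrho_V(f(z_1),f(z_2))\le\varrho_\D(\tilde f(w_1),\tilde f(w_2))\le\varrho_\D(w_1,w_2)=\varrho_U(z_1,z_2)$ forces $\tilde f$ to preserve the hyperbolic distance of one pair of distinct lifts, hence to be an automorphism of $\D$ (equivalently of $\H$) by the equality case of Schwarz. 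Two small points of care. First, the attainment of the infimum over the fiber needs more than discreteness and closedness of $\pi_U^{-1}(z_2)$: one should also use that closed hyperbolic balls in $\D$ are compact (equivalently, completeness of $\varrho_\D$), after which your nearest-point argument is fine. Second, the rigidity clause is applied in the paper not only to inclusions but to self-maps, e.g.\ to $f^m:U_0\to U_0$ in the proof of Theorem~B (inequality \eqref{eq:rho}), so it is better to phrase your last step for a general holomorphic $f:U\to V$ rather than for the lifted inclusion $\tilde\iota$; your argument applies verbatim in that generality, since any two lifts of $f$ differ by a deck transformation and so one lift is a M\"obius automorphism if and only if all are.
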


Using the two lemmas above we can easily deduce the following estimate, which will be useful in further parts of the paper. We include its proof for completeness.

\begin{lem}[{\bf Hyperbolic estimates II}]
\label{lem:dist} 
Let $U \subset \C$ be an unbounded
hyperbolic domain. Then there exists $c > 0$ such that 
\[
\varrho_U(z) > \frac{c}{|z| \log |z|} 
\]
if $z \in U$ and $|z|$ is sufficiently large.
\end{lem}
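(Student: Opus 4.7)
My plan is to reduce the desired bound near the ``point at infinity'' to the standard estimate near a finite boundary point supplied by Lemma~\ref{lem:CG}, using an inversion. Since $U\subset\C$ is unbounded, $\infty\in\bd_{\clC}U$; as $U$ is hyperbolic, $\bd_{\clC}U$ has at least three points, so $\C\setminus U$ contains at least two. Pick any $z_0\in\C\setminus U$ and set $\phi(z)=1/(z-z_0)$, $V=\phi(U)$. Then $V$ is a hyperbolic domain in $\C$, and because $\phi(\infty)=0\notin V$ while $U$ contains points of arbitrarily large modulus, we have $0\in\bd V$ (boundary taken in~$\C$).

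Next I would transport the density estimate. With $w=\phi(z)$, the conformal identity $\varrho_U(z)=\varrho_V(w)\,|dw/dz|=\varrho_V(w)/|z-z_0|^{2}$, combined with Lemma~\ref{lem:CG} applied to $V$ at $w$ (which tends to the boundary point $0\in\bd V$ as $|z|\to\infty$), yields
\[
\varrho_U(z)\ge\frac{1+o(1)}{|z-z_0|^{2}\,\dist(w,\bd V)\log\bigl(1/\dist(w,\bd V)\bigr)}.
\]
Since $0\in\bd V$, one has $\dist(w,\bd V)\le|w|=1/|z-z_0|$, and since the function $t\mapsto t\log(1/t)$ is increasing on $(0,1/e)$, for $|z|$ large enough
\[
\dist(w,\bd V)\log\bigl(1/\dist(w,\bd V)\bigr)\le|w|\log(1/|w|)=\frac{\log|z-z_0|}{|z-z_0|}.
\]
Substituting and using $|z-z_0|\sim|z|$, $\log|z-z_0|\sim\log|z|$ as $|z|\to\infty$ gives $\varrho_U(z)\ge(1+o(1))/(|z|\log|z|)$, which yields the claim for any fixed $c<1$ once $|z|$ is sufficiently large.

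The only step that requires a moment's thought is the monotonicity argument upgrading the bound $\dist(w,\bd V)\le|w|$ to a bound on $t\log(1/t)$; this works precisely because both $|w|$ and $\dist(w,\bd V)$ are small, so both lie in the increasing regime $(0,1/e)$ of that function. The remaining steps amount to a routine Möbius change of variable and asymptotic bookkeeping, so I do not foresee any real difficulty.
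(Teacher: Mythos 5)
Your argument is correct, but it takes a genuinely different route from the paper's. The paper never moves the domain: it encloses $U$ in $U'=\C\setminus\{z_0,z_1\}$ for two finite omitted points, applies Schwarz--Pick's Lemma~\ref{lemma:schwarz_pick} to the inclusion, and then quotes the explicit asymptotics of the hyperbolic density of the twice-punctured plane $\C\setminus\{0,1\}$ near the puncture $0$, transported to $\infty$ by $z\mapsto 1/z$. You instead invert $U$ itself, so that the troublesome boundary point $\infty$ becomes the finite boundary point $0$ of $V=\phi(U)$, and then invoke the general near-boundary lower bound of Lemma~\ref{lem:CG} together with the monotonicity of $t\mapsto t\log(1/t)$ on $(0,1/e)$; the bookkeeping (conformal change of density, $\dist(w,\bd V)\le |w|$, and $|z-z_0|\sim |z|$, $\log|z-z_0|\sim\log|z|$) is all in order, and you even get the constant $c$ arbitrarily close to $1$, slightly more than the statement asks. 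As for what each approach buys: the paper needs only the comparability of $\varrho_{\C\setminus\{0,1\}}$ with $1/\bigl(|z|\log(1/|z|)\bigr)$ near the puncture, whereas you lean on the estimate $\varrho_V(w)\ge (1+o(1))/\bigl(\dist(w,\bd V)\log(1/\dist(w,\bd V))\bigr)$, which is the deeper half of Lemma~\ref{lem:CG} and is itself proved by comparison with a twice-punctured plane, so at bottom the same fact is used, one step further downstream. One word of caution: that estimate, read as holding uniformly whenever $\dist(w,\bd V)\to 0$, is delicate for unbounded domains (it can fail when $w\to\infty$ while $\dist(w,\bd V)\to 0$, as for $\C\setminus\{2^k:k\in\N\}$); your application is safe precisely because $w$ tends to the fixed finite boundary point $0\in\bd V$, and it is worth saying explicitly that this is the regime in which you apply the lemma.
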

\begin{proof} Since $U$ is hyperbolic, there exist two distinct points $z_0,
z_1 \in \C \setminus U$, so $U$ is a subset of $U' = \C \setminus \{z_0, z_1\}$. By the
Schwarz--Pick's Lemma \ref{lemma:schwarz_pick}, we have $\varrho_U(z) \ge \varrho_{U'}(z)$ for $z \in U$.  
At the same time, $\varrho_{U'}(z) = c \varrho_{U''}(z)$ for $U''=\C\setminus \{0,1\}$ and a positive constant $c$, and the standard estimates of the hyperbolic metric in $U''$ (see e.g.~\cite{ahlfors,carlesongamelin}) give
\[
\varrho_{U''}(z) = \frac{\mathcal O(1)}{|z|\log(1/|z|)}
\]
as $|z|\to 0$. Transforming the metric under $1/z$, which leaves $U''$ invariant, we obtain
\[
\varrho_{U''}(z) =  \frac{\mathcal O(1)}{|z|\log|z|}
\]
as $|z| \to \infty$, from which the estimate follows.
\end{proof}

The following result follows easily from the algebraic properties of universal coverings (see e.g. \cite[Theorem~2]{pommar} or \cite[Lemma~4]{konig}). We include its 
proof for completeness.

\begin{lem}\label{lem:cover}
Let $U$ be a hyperbolic domain in $\C$ and let $F: U \to U$ be a holomorphic
map, such that for some $\zeta$ in the boundary of $U$ in $\clC$ we have
$F^n(z) \to \zeta$ as $n \to \infty$ for $z \in U$. Let $\pi: \HH \to U$ be 
a holomorphic universal covering and let $G: \HH \to \HH$ be a lift of $F$ by
$\pi$, i.e.~$F\circ\pi = \pi\circ G$. Suppose that $G$ is univalent. Then 
the induced endomorphism $F^*$ of the fundamental group of $U$ is an isomorphism. Moreover, if additionally, for every closed curve $\gamma \subset U$ there exists $n \ge 0$ such that $F^n(\gamma)$ is contractible in $U$, then $U$ is simply connected and $\pi$ is a Riemann map.
\end{lem}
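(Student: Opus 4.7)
The plan is to carry everything to the level of the deck transformation group $\Gamma$ of the universal covering $\pi:\HH\to U$, which is canonically identified with $\pi_1(U)$. For any $\sigma\in\Gamma$, the map $G\circ\sigma$ is another lift of $F$ since $\pi\circ G\circ\sigma = F\circ\pi\circ\sigma = F\circ\pi$, and any two lifts of the same map from $U$ differ by left-composition with a unique deck transformation (this is the standard monodromy fact, using connectedness of $\HH$). This produces a unique $F^*(\sigma)\in\Gamma$ by the rule $G\circ\sigma = F^*(\sigma)\circ G$, and a short direct computation shows that $F^*$ is a group homomorphism. Under the identification $\Gamma\cong\pi_1(U)$ this agrees with the topologically induced endomorphism.

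First I would settle the isomorphism claim by establishing injectivity of $F^*$. If $F^*(\sigma)=\id$, then $G\circ\sigma = G$, so $G(\sigma(w))=G(w)$ for every $w\in\HH$, and univalence of $G$ forces $\sigma=\id$. Hence the kernel is trivial, and $F^*$ is an isomorphism onto its image in $\Gamma$.

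For the second part I would iterate the construction. By induction $\pi\circ G^n = F^n\circ\pi$, so $G^n$ is a lift of $F^n$; as a composition of univalent maps it is univalent, hence the first part applied to $F^n$ yields that $(F^n)^*=(F^*)^n$ is injective on $\Gamma$ for every $n\ge 1$. The additional hypothesis says that for every closed curve $\gamma\subset U$ there is $n$ with $F^n(\gamma)$ contractible in $U$; under the identification $\pi_1(U)\cong\Gamma$ this reads $(F^*)^n([\gamma])=\id$. Injectivity of $(F^*)^n$ then forces $[\gamma]=\id$, so $\gamma$ is contractible. Since $\gamma$ was arbitrary, $\pi_1(U)$ is trivial and $U$ is simply connected. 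A universal holomorphic covering between simply connected hyperbolic domains is a biholomorphism, so $\pi$ is a Riemann map.

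The main obstacle I anticipate is the base-point bookkeeping required to match the algebraic endomorphism $F^*:\Gamma\to\Gamma$ with the topologically induced map on $\pi_1(U,z_0)$: both are only canonically defined up to inner automorphism (one must fix a base point $z_0\in U$, a lift $\tilde z_0\in\pi^{-1}(z_0)$, and a path joining $F(z_0)$ to $z_0$). The point is that triviality of an element of $\Gamma$ is invariant under inner automorphism, so the translation of the topological hypothesis into the algebraic one "$(F^*)^n([\gamma])=\id$" is unambiguous and the argument above closes without depending on the auxiliary choices.
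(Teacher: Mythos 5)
Your proposal follows essentially the same route as the paper's proof: identify $\pi_1(U)$ with the deck group $\Gamma$ of $\pi$, define the induced endomorphism by the relation $G\circ\sigma = F^*(\sigma)\circ G$ (and $G^n\circ\sigma=(F^n)^*(\sigma)\circ G^n$), use univalence of $G$, hence of $G^n$, to see that all these kernels are trivial, and then note that the extra hypothesis places every homotopy class in some kernel, forcing $\pi_1(U)=\{\id\}$, so that the universal covering $\pi$ is a Riemann map. Like the paper's own argument, what you actually establish about $F^*$ is injectivity (your ``isomorphism onto its image''), which is precisely the information the paper derives at the corresponding step, so the two proofs coincide in substance.
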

\begin{proof}  
The domain $U$ is isomorphic (as a Riemann surface) to the quotient $\HH /
\Gamma$, where $\Gamma$ is the group of cover transformations acting on $\HH$.
The group $\Gamma$ is isomorphic to the fundamental group of $U$, denoted by
$\pi_1(U)$. For $n \ge 0$ let $\theta_n: \Gamma \to \Gamma$ be an endomorphism induced by $G^n$ (i.e.~$G^n\circ \gamma = \theta_n(\gamma) \circ G^n$ for $\gamma \in
\Gamma$). The endomorphism $\theta_n$ corresponds to an endomorphism $\tilde
\theta_n = (F^n)^*: \pi_1(U) \to \pi_1(U)$ induced by $F^n$ (see \cite{pommar}). Set $N
= \bigcup_{n=0}^\infty \ker \theta_n$, $\tilde N
= \bigcup_{n=0}^\infty \ker \tilde \theta_n$. Since $G$ is univalent, we have 
$N = \{\id\} = \tilde N$, so $(F^n)^*$ is an isomorphism. 

Suppose that for every closed curve $\gamma \subset U$ there exists $n \ge 0$ such that $F^n(\gamma)$ is contractible in $U$. Then $\pi_1(U) = \tilde N = \{\id\}$, so $U$ is simply connected and $\pi$ is a Riemann map.  
\end{proof}

\subsection{Lifts of maps and absorbing domains}\label{subsec:absorbing}

Let $U$ be a hyperbolic domain in $\C$ and let $F: U \to U$ be a holomorphic map. Recall that an invariant domain $W \subset U$ is absorbing for $F$ in $U$, if for every compact set $K \subset U$ there exists $n >0$, such that $F^n(K) \subset W$. The main goal of this subsection is to present some results due to Cowen and K\"onig on the existence of absorbing domains.

Recall first the classical Denjoy--Wolff Theorem, which describes the dynamics of a holomorphic map $G$ in $\H$. 

\begin{thm}[\bf{Denjoy--Wolff's Theorem {\cite[Theorem 3.1]{carlesongamelin}}}] 
\label{thm:denjoy}
Let $G: \H \to \H$ be a non-constant holomorphic map, which is not an automorphism of $\H$. Then there exists a point $z_0 \in \clH \cup \{\infty\}$ $($called
the Denjoy--Wolff point of $G)$, such that $G^n$ tends to $z_0$  
uniformly on compact subsets of $\H$ as $n \to \infty$.
\end{thm}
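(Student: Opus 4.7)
The plan is to conjugate $G$ to a self-map of the unit disc via the Cayley transform, obtaining a holomorphic, non-constant, non-automorphic $\tilde G\colon \D \to \D$, and to prove the analogous statement there. Producing a Denjoy--Wolff point $\tilde z_0 \in \overline{\D}$ for $\tilde G$ then yields $z_0 \in \clH \cup \{\infty\}$ for $G$ after pulling back.

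If $\tilde G$ has an interior fixed point $p \in \D$, the Schwarz--Pick Lemma (Lemma~\ref{lemma:schwarz_pick}) gives $\varrho_\D(\tilde G(z), p) \le \varrho_\D(z, p)$, and the strict inequality, valid because $\tilde G$ is not an automorphism, together with a compactness argument on hyperbolic discs around $p$ upgrades this to a uniform contraction, yielding $\tilde G^n \to p$ locally uniformly. We take $\tilde z_0 = p$.

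The heart of the proof is the fixed-point-free case. Montel's theorem makes $\{\tilde G^n\}$ normal, and any subsequential limit $g$ maps $\D$ into $\overline{\D}$. A Hurwitz-type argument rules out $g$ being non-constant: otherwise the open mapping theorem would force $g(\D)\subset \D$, and applying normality to $\{\tilde G^{n_k+1}\}$ would eventually produce an interior fixed point of $\tilde G$. Hence every subsequential limit is a constant of modulus one. To pin down the unique limiting value $\tilde z_0$, I would invoke Wolff's horodisc construction: for $r\nearrow 1$, the maps $\tilde G_r = r\tilde G$ each have a fixed point $p_r\in\D$ (they send $\overline{\D}$ compactly into $\D$), and any accumulation point of $\{p_r\}$ must lie on $\partial\D$, since $\tilde G$ has no interior fixed point. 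Picking any such $\tilde z_0\in\partial\D$, Wolff's lemma asserts that every Euclidean horodisc in $\D$ internally tangent to $\partial\D$ at $\tilde z_0$ is $\tilde G$-invariant. Any subsequential limit of $\tilde G^n$ therefore takes values in each such horodisc, forcing it to equal the single point $\tilde z_0$, and normality promotes this to locally uniform convergence.

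The main obstacle is Wolff's lemma itself. Its proof rests on pushing the Schwarz--Pick invariance of closed hyperbolic discs centered at $p_r$ under $\tilde G_r$ to the limit $r\to 1$: as $p_r\to \tilde z_0\in\partial\D$, hyperbolic discs of fixed radius centered at $p_r$ Hausdorff-converge to Euclidean horodiscs internally tangent to $\partial\D$ at $\tilde z_0$, and the invariance property survives because $\tilde G_r\to \tilde G$ locally uniformly. Once Wolff's lemma is in hand, the remainder of the argument is soft normal-family reasoning.
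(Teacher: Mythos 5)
The paper does not actually prove this theorem --- it is quoted from Carleson--Gamelin as a background tool --- so there is no internal proof to compare with. Your outline is the classical Wolff argument that the cited source uses (interior fixed point via Schwarz--Pick, otherwise the fixed points $p_r$ of $r\tilde G$, Wolff's lemma on invariant horodiscs, then a normal-families argument), so the architecture is right; however, two steps are wrong as stated.

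First, in your sketch of Wolff's lemma: hyperbolic discs of \emph{fixed} radius centred at $p_r$ do not Hausdorff-converge to horodiscs. Since the hyperbolic density blows up at $\partial\D$, such discs collapse to the single point $\tilde z_0$ as $p_r\to\tilde z_0$, and passing invariance to that limit yields nothing. One must let the radii grow with $r$, e.g.\ choose $R_r$ so that $\DD_\D(p_r,R_r)$ passes through a prescribed interior point; these Euclidean discs then converge to the closed horodisc through that point tangent at $\tilde z_0$, and the Schwarz--Pick invariance of the approximating discs under $\tilde G_r$ passes in the limit to invariance of closed horodiscs under $\tilde G$. Second, your elimination of non-constant subsequential limits is misdirected: applying $\tilde G$ once more along the subsequence $\{\tilde G^{n_k+1}\}$ produces an interior fixed point only when the limit is a \emph{constant} $c\in\D$ (then $\tilde G(c)=\lim \tilde G^{n_k+1}(z)=c$), which is the step you actually need to exclude interior constants but never state; for a non-constant limit $g$ no fixed point appears this way. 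The standard exclusion of non-constant $g$ uses the gap subsequence: with $m_k=n_{k+1}-n_k$ and $\tilde G^{m_k}\to h$, one gets $h\circ g=g$, hence $h=\id$ on the open set $g(\D)$ and so on all of $\D$, which forces $\tilde G$ to be injective and, by Hurwitz, surjective, i.e.\ an automorphism --- contradicting the hypothesis (note this uses non-automorphy, not fixed-point-freeness). With these two repairs, and the observation that once a limit is known to be constant its value lies in every closed horodisc (evaluate at a point inside each one), your proof goes through and coincides with the cited classical one.
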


The following result, due to Cowen, gives the main tool for constructing absorbing domains.

\begin{thm}[{\bf Cowen's Theorem {\cite[Theorem 3.2]{cowen}}}, see also {\cite[Lemma 1]{konig}}] \label{thm:cowen}
Let $G: \HH \to \HH$ be a holomorphic
map such that $G^n \to\infty$ as $n \to \infty$. Then there exists a simply
connected domain $V \subset \HH$, a domain $\Omega$ equal to $\HH$ or $\C$, a
holomorphic map $\varphi: \HH \to \Omega$, and a M\"obius transformation $T$
mapping $\Omega$ onto itself, such that:
\begin{itemize}
\item[$(a)$] $G(V) \subset V$,
\item[$(b)$] $V$ is absorbing in $\HH$ for $G$, 
\item[$(c)$] $\varphi(V)$ is absorbing in $\Omega$ for $T$,  
\item[$(d)$] $\varphi \circ G = T \circ \varphi$ on $\HH$,
\item[$(e)$] $\varphi$ is univalent on $V$.
\end{itemize}
Moreover, $\varphi$, $T$ depend only on $G$. In fact $($up to
a conjugation of $T$ by a M\"obius transformation preserving $\Omega)$, one of
the following cases holds:
\begin{itemize}
\item $\Omega = \C$, $T(\omega) = \omega + 1$,
\item $\Omega = \HH$, $T(\omega) = a\omega$ for some $a > 1$,
\item $\Omega = \HH$, $T(\omega) = \omega \pm i$.
\end{itemize}
\end{thm}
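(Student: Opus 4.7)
The plan is to follow Cowen's original approach, which rests on combining the Denjoy--Wolff dynamics with a careful renormalization of the iterates of $G$. First I would set up the key hyperbolic invariant. By Schwarz--Pick's Lemma applied to $G$, the sequence $s_n = \varrho_\HH(G^n(z), G^{n+1}(z))$ is non-increasing, so it converges to some limit $\ell(z) \ge 0$; a standard argument using the hyperbolic triangle inequality and $G^n(z)\to\infty$ shows $\ell(z)$ is actually independent of $z$. This produces the fundamental dichotomy: the \emph{hyperbolic case} $\ell>0$ versus the \emph{parabolic case} $\ell=0$.

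Next I would construct the semi-conjugacy $\varphi$ by renormalizing the orbits. Fix a base point $z_0 \in \HH$, write $w_n = G^n(z_0)$, and consider a sequence of renormalized iterates chosen so that the base orbit is normalized. In the hyperbolic case I would define $\varphi_n(z) = G^n(z)/w_n$ and show, using the Koebe-type control provided by Lemma~\ref{lem:CG} together with the Schwarz--Pick bound on $G$, that $\{\varphi_n\}$ forms a normal family on $\HH$; passing to a limit $\varphi$ and exploiting $\varrho_\HH(w_{n+1},w_n)\to\ell>0$ produces $\varphi\circ G = a\varphi$ with $a = e^{2\ell}>1$, so $\Omega=\HH$ and $T(\omega)=a\omega$. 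In the parabolic case I would instead set $\varphi_n(z) = (G^n(z) - w_n)/(w_{n+1}-w_n)$; normality again follows from hyperbolic estimates, the limit satisfies $\varphi\circ G = \varphi+1$, and the image $\varphi(\HH)$ is automatically invariant under $\omega\mapsto\omega+1$. Distinguishing the subcases amounts to determining $\varphi(\HH)$: if it is all of $\C$ we are in the non-automorphism type with $\Omega=\C$ and $T(\omega)=\omega+1$; if it is a horizontal strip, a suitable affine change of variables turns $T$ into $\omega\mapsto\omega\pm i$ on $\HH$, the sign being read off from the direction in which $\Im\varphi_n$ drifts along fibers of $G$.

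Finally, having $\varphi$ and $T$ in hand, I would define the absorbing domain by pulling back a fundamental region for $T$. Explicitly, pick $U_T\subset\Omega$ of the form $\{\Re\omega > c\}$ (hyperbolic and non-automorphism parabolic cases) or a half-strip (automorphism parabolic case), with $c$ chosen large enough that $U_T$ is forward-invariant under $T$ and that the forward $T$-orbit of any compact set in $\Omega$ eventually enters $U_T$. Setting $V$ to be the connected component of $\varphi^{-1}(U_T)$ containing a sufficiently advanced iterate of the orbit gives a simply connected, $G$-invariant domain. Invariance is inherited from $T(U_T)\subset U_T$ via the functional equation; $\varphi(V)$ is absorbing in $\Omega$ because $\{T^n\}$ sweeps any compact set into $U_T$; $V$ is absorbing in $\HH$ for $G$ because iterates carry compact sets toward $\infty$ and $\varphi$ semi-conjugates the two dynamics; and univalence of $\varphi|_V$ holds since $U_T$ sits inside a $T$-fundamental domain, so any two distinct points of $V$ with equal $\varphi$-values would force a nontrivial $T$-period.

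The main obstacle I anticipate is the parabolic case: proving that $\{\varphi_n\}$ is a normal family requires delicate control over how fast $w_{n+1}-w_n$ can shrink (since $s_n\to 0$ already only controls hyperbolic, not Euclidean, distance), and correctly identifying the automorphism versus non-automorphism subcase depends on extracting a subtle second-order invariant — roughly, whether the lateral drift $\Im(\varphi_n(z)-\varphi_n(z_0))$ stays bounded as $n\to\infty$. All three model dynamics $\omega+1$, $a\omega$, $\omega\pm i$ genuinely occur, and separating them cleanly is the technical heart of Cowen's argument.
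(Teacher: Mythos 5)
You should first note that the paper does not prove this statement at all: Theorem~\ref{thm:cowen} is quoted as a background result from Cowen (and K\"onig), whose published proof runs through a quite different construction --- the direct limit (quotient Riemann surface) of $(\HH,G)$, uniformization of that limit to $\C$ or $\HH$, and the classification of its induced M\"obius automorphism; the fundamental set $V$ and the univalence of $\varphi$ on $V$ come out of that construction. Your sketch instead follows the renormalization route of Valiron/Pommerenke/Baker--Pommerenke, which is a legitimate alternative in principle, but as written it has concrete errors. First, the claim that $\ell(z)=\lim_n\varrho_\HH(G^n(z),G^{n+1}(z))$ is independent of $z$ is false: for $G(z)=2z$ one has $s_n(z)=\varrho_\HH(z,2z)$ for every $n$, and this quantity genuinely depends on $z$ (it blows up as $z$ moves to the boundary along a fixed vertical line). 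Only the dichotomy $\ell\equiv 0$ versus $\ell>0$ everywhere is point-independent, and even that requires an argument you have not supplied; in particular your formula $a=e^{2\ell}$ has no invariant meaning. Second, the trichotomy is sorted incorrectly: zero hyperbolic step corresponds exactly to $\Omega=\C$, $T(\omega)=\omega+1$, while positive hyperbolic step contains \emph{both} half-plane models, separated by the angular derivative at the Denjoy--Wolff point ($>1$ gives $T(\omega)=a\omega$, $=1$ gives $T(\omega)=\omega\pm i$). Your attempt to recover $\omega\pm i$ from a strip-shaped image of $\varphi$ by an ``affine change of variables'' cannot work: a conformal map of a translation-invariant strip onto a half-plane is exponential, and it turns the translation into a dilation, not into a parabolic automorphism.

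The construction of $V$ is also not established. A connected component of $\varphi^{-1}(U_T)$ need not be simply connected (nothing prevents critical points of $\varphi$ there), the functional equation only gives $G(z)\in\varphi^{-1}(U_T)$, not that $G$ preserves the chosen component, and the same issue undermines the absorbing property: $T^n(\varphi(K))\subset U_T$ puts $G^n(K)$ in the preimage, not in $V$. Most seriously, your univalence argument is not a proof: if $z_1\neq z_2$ in $V$ with $\varphi(z_1)=\varphi(z_2)$, the relation $\varphi\circ G=T\circ\varphi$ yields no ``nontrivial $T$-period'' and no contradiction --- the semi-conjugacy is perfectly compatible with $\varphi$ failing to be injective on any given set. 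In Cowen's argument this is exactly the point where the direct-limit construction is used ($\varphi$ identifies two points only if some iterate $G^n$ does, and the fundamental set is chosen so that this cannot happen inside $V$); a renormalization-based proof must replace that mechanism by something, and your sketch does not. Together with the unproved normality/convergence of the renormalized iterates (which, as you say yourself, is the technical heart, especially in the parabolic case), these are genuine gaps rather than routine details.
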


Using Cowen's result, K\"onig  proved the following theorem which provides the existence of simply connected absorbing domains for $F$ in $U$ under certain assumptions. In particular, these assumptions are trivially satisfied if $U$ is simply connected.

\begin{thm}[\bf{K\"onig's Theorem \cite{konig}}] 
\label{thm:konig}
Let $U$ be a hyperbolic domain in $\C$ and let $F: U \to U$ be a holomorphic
map, such that $F^n \to \infty$ as $n \to \infty$. Suppose that for every
closed curve $\gamma \subset U$ there exists $n > 0$ such that $F^n(\gamma)$ is
contractible in $U$. Then there exists a simply
connected domain $W \subset U$, a domain $\Omega$ and a transformation $T$ as
in Cowen's Theorem~{\rm \ref{thm:cowen}}, and a holomorphic map $\psi: U \to \Omega$, such that:
\begin{itemize}
\item[$(a)$] $F(W) \subset W$,
\item[$(b)$] $W$ is absorbing in $U$ for $F$, 
\item[$(c)$]  $\psi(W)$ is absorbing in $\Omega$ for $T$, 
\item[$(d)$] $\psi \circ F = T \circ \psi$ on $U$,
\item[$(e)$] $\psi$ is univalent on $W$.
\end{itemize}
In fact, if we take $V$ and $\varphi$ from Cowen's Theorem~{\rm  \ref{thm:cowen}} for $G$ being a
lift of $F$ by a universal covering $\pi : \HH \to U$, then $\pi$ is univalent
in $V$ and one can take $W = \pi(V)$ and $\psi = \varphi \circ \pi^{-1}$, which
is well defined in $U$.

Moreover, if $f$ is a meromorphic map with
finitely many poles, and $U$ is a periodic
Baker domain of period $p$, then the above assumptions are satisfied for $F = f^p$, and consequently, there exists $W\subset U$ with the properties $(a)$--$(e)$ for $F = f^p$. 
\end{thm}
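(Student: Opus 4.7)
The plan is to lift everything to the universal cover $\pi:\HH\to U$, apply Cowen's Theorem~\ref{thm:cowen} there, and push the construction back down. Let $G:\HH\to\HH$ be a lift of $F$, so $F\circ\pi = \pi\circ G$. Since $F^n\to\infty$ on $U$, $F$ has no fixed point in $U$, hence $G$ has none in $\HH$; by the Denjoy--Wolff Theorem~\ref{thm:denjoy}, $G^n$ converges locally uniformly to some $z_0\in\bd\HH\cup\{\infty\}$, and by pre-composing $\pi$ with a suitable automorphism of $\HH$ we may assume $z_0=\infty$. Cowen's Theorem then yields a simply connected $G$-invariant absorbing domain $V\subset\HH$, a target $\Omega$, a M\"obius transformation $T:\Omega\to\Omega$ and a holomorphic $\varphi:\HH\to\Omega$ with $\varphi\circ G=T\circ\varphi$, $\varphi(V)$ absorbing for $T$ in $\Omega$, and $\varphi|_V$ univalent.

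The key step is to show that $\pi|_V$ is injective, and this is where the contractibility hypothesis is essential. Suppose $z_1\ne z_2$ in $V$ satisfy $\pi(z_1)=\pi(z_2)$; then $z_2=\gamma(z_1)$ for some non-identity $\gamma$ in the covering group $\Gamma\cong\pi_1(U)$. A path in $\HH$ from $z_1$ to $z_2$ projects to a loop $\alpha\subset U$ representing $\gamma$, and by hypothesis there exists $n\ge 1$ with $F^n(\alpha)$ null-homotopic in $U$. In the notation of Lemma~\ref{lem:cover}, this says $\theta_n(\gamma)=\id$, where $G^n\circ\delta=\theta_n(\delta)\circ G^n$ for $\delta\in\Gamma$, so $G^n(z_2)=\theta_n(\gamma)(G^n(z_1))=G^n(z_1)$. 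Then $\varphi\circ G^n=T^n\circ\varphi$ gives $T^n(\varphi(z_1))=T^n(\varphi(z_2))$, hence $\varphi(z_1)=\varphi(z_2)$ since $T$ is invertible, contradicting univalence of $\varphi|_V$. The same argument applied to every $\gamma\in\Gamma$ shows $\varphi\circ\gamma=\varphi$, so $\varphi$ descends to a holomorphic $\psi:U\to\Omega$ with $\psi\circ\pi=\varphi$, i.e.\ $\psi=\varphi\circ\pi^{-1}$ read through any local inverse branch. Setting $W:=\pi(V)$, the univalence of $\pi|_V$ and the simple connectedness of $V$ make $W$ a simply connected subdomain of $U$.

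Properties (a)--(e) are then formal: (a) $F(W)=\pi(G(V))\subset\pi(V)=W$; (d) $\psi\circ F\circ\pi=\varphi\circ G=T\circ\varphi=T\circ\psi\circ\pi$ combined with surjectivity of $\pi$; (e) $\psi|_W=\varphi\circ(\pi|_V)^{-1}$ is a composition of univalent maps; (c) $\psi(W)=\varphi(V)$ is absorbing for $T$ in $\Omega$ by Cowen. For (b), cover a compact $K\subset U$ by finitely many evenly covered open sets, pick a sheet of $\pi$ over each, and take the union to get a compact lift $\widetilde K\subset\HH$ with $\pi(\widetilde K)=K$; then Cowen's absorbing property yields $G^n(\widetilde K)\subset V$ for all large $n$, whence $F^n(K)=\pi(G^n(\widetilde K))\subset W$. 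For the Baker domain assertion, $F^n\to\infty$ on $U$ is immediate from the definition (passing to the unbounded component of the cycle if necessary), and the contractibility hypothesis follows from $f$ having finitely many poles: any loop $\gamma\subset U$ encloses only finitely many poles, and since $F^n(\gamma)$ escapes to infinity, it eventually encloses none and hence bounds a disc in $U$, as in K\"onig's original argument. The main obstacle throughout is the injectivity of $\pi|_V$, into which the hypothesis on eventual contractibility of iterated loops feeds in an essential way, allowing Cowen's conjugacy to be transported down through the covering.
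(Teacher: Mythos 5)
First, note that the paper does not prove this statement at all: it is quoted as background (K\"onig's Theorem, \cite{konig}), so there is no internal proof to compare against. Judged on its own, the first and main part of your argument is a sound reconstruction of the standard proof: lifting $F$ to $G$ on $\HH$, normalizing the Denjoy--Wolff point to $\infty$, invoking Cowen's Theorem~\ref{thm:cowen}, and then using the eventual-contractibility hypothesis to show that $\pi$ is injective on $V$ and that $\varphi$ is invariant under the whole deck group (your lifting argument giving $G^n(z_2)=G^n(z_1)$ from the null-homotopy of $F^n(\alpha)$, followed by injectivity of $T^n$ and univalence of $\varphi|_V$, is correct), after which $W=\pi(V)$, $\psi=\varphi\circ\pi^{-1}$ and properties $(a)$--$(e)$ follow formally, as you say.

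The genuine gap is in the final ``moreover'' part. You claim that since $F^n(\gamma)$ escapes to infinity it ``eventually encloses no poles and hence bounds a disc in $U$.'' Neither half of this is justified. A curve tending uniformly to $\infty$ can perfectly well wind around the (bounded) set of poles for every $n$ --- think of circles $|z|=R_n$ with $R_n\to\infty$ --- and indeed multiply connected Baker domains exist (Dominguez, K\"onig), in which curves do keep enclosing poles and Julia points forever; so the implication you assert cannot be a formal consequence of escape, and ruling it out is precisely where the hypothesis of finitely many poles must be used through a genuine argument (K\"onig's original lemmas, or mapping/argument-principle considerations of the kind developed in Section~\ref{section:configurations} and Lemma~\ref{poles-in-holes} of this paper). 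Moreover, even granting that no poles are eventually enclosed, contractibility of $F^n(\gamma)$ in $U$ requires $K(F^n(\gamma))\subset U$, i.e.\ that no Julia points are enclosed; this needs an extra step (e.g.\ the contrapositive of Lemma~\ref{poles-in-holes}), which you only wave at via ``as in K\"onig's original argument.'' Finally, your remark ``passing to the unbounded component of the cycle if necessary'' verifies the hypothesis $F^n\to\infty$ for a different member of the cycle, not for the given $U$ (when the limit point of $U$ is finite one should instead conjugate it to $\infty$ by a M\"obius map, since all hypotheses of the first part are M\"obius invariant). As it stands, the Baker-domain assertion --- the part of the theorem that actually carries the ``finitely many poles'' hypothesis --- is not proved.
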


\subsection{Existence of weakly repelling fixed points} \label{sec:prelimswrfp}

We shall use several tools to establish the existence of weakly repelling fixed
points in certain subsets of the plane. The results in this section will not be used until Section~\ref{sec:proof_thmB}.

The first classical result in this direction is due to Fatou.

\begin{thm}[\bf Fatou {\cite[Corollary 12.7]{milnor}}] \label{thm:fatouwrfp}
Every rational map $f: \clC \to\clC$ with $\deg(f)\geq 2$ has at least one weakly
repelling fixed point.
\end{thm}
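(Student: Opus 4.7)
The plan is to reduce the statement to the classical rational fixed point formula and then extract a contradiction from a simple real-part inequality.

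First I would normalize: after conjugating $f$ by a Möbius transformation (which preserves the weakly repelling property since it is a local conformal invariant), I may assume $\infty \notin \mathrm{Fix}(f)$; this is possible because $f$ has only finitely many fixed points. Writing $f = P/Q$ in lowest terms with $\max(\deg P, \deg Q) = d \geq 2$, the fixed points of $f$ in $\C$ are the zeros of $P(z) - zQ(z)$, which now has degree exactly $d+1$. So $f$ has $d+1 \geq 3$ fixed points in $\clC$, counted with multiplicity.

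The key tool is the \emph{rational fixed point formula}: if every fixed point $z_i$ is simple (i.e.~$f'(z_i) \neq 1$), then
\[
\sum_{i=0}^{d}\frac{1}{1-f'(z_i)} \;=\; 1.
\]
I would establish this by applying the residue theorem to the meromorphic $1$-form $\omega = dz/(z - f(z))$ on $\clC$: near a simple fixed point $z_i$ one has $z - f(z) = (1 - f'(z_i))(z - z_i) + O((z - z_i)^2)$, giving residue $1/(1 - f'(z_i))$; a direct computation of the residue of $\omega$ at $\infty$ (using $\infty \notin \mathrm{Fix}(f)$) yields $-1$, and summing residues to $0$ gives the identity.

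Now I argue by contradiction. Assume $f$ has no weakly repelling fixed point. Then at every fixed point $z_i$ we have $f'(z_i) \neq 1$ (so the formula applies) and $|f'(z_i)| \leq 1$. A short calculation shows that for $\lambda = r e^{i\theta}$ with $0 \leq r \leq 1$ and $\lambda \neq 1$,
\[
\Re\!\left(\frac{1}{1-\lambda}\right) = \frac{1 - r\cos\theta}{1 - 2r\cos\theta + r^2} \;\geq\; \frac{1}{2},
\]
since this is equivalent to $1 \geq r^2$. Summing over the $d+1 \geq 3$ fixed points, the left-hand side of the fixed point formula has real part at least $(d+1)/2 \geq 3/2$, contradicting the identity $\sum 1/(1-f'(z_i)) = 1$.

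The main obstacle is the fixed point formula itself; once it is available, the real-part inequality is immediate and the counting of $d+1$ fixed points is routine. The only other subtlety is the possibility of a multiple fixed point with $f'(z_i)=1$, which we handle for free: such a point is by definition weakly repelling, so there is nothing to prove in that case.
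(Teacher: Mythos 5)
Your proof is correct and is essentially the argument behind the result as cited: the paper quotes Fatou's theorem from Milnor (Corollary 12.7), whose proof is exactly this application of the rational (holomorphic) fixed point formula $\sum_i 1/(1-f'(z_i))=1$ together with the observation that $\Re\bigl(1/(1-\lambda)\bigr)\ge 1/2$ when $|\lambda|\le 1$, $\lambda\ne 1$, and the count of $d+1\ge 3$ fixed points. The normalization $\infty\notin\mathrm{Fix}(f)$, the residue computation, and the reduction of the multiple-fixed-point case to $f'(z_i)=1$ being weakly repelling by definition are all handled correctly.
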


In view of this, a map which locally behaves as a rational map should also have
points of the same character. This is formalized in the following two
propositions. By a proper map $f:D' \to D$ we mean a map from $D'$ onto $D$,
such that for every compact set $X \subset D$, the set $f^{-1}(X)$ is compact. Proper maps
always have a well defined finite degree.

\begin{thm}[\bf Polynomial-like maps \cite{doua-hubb}] \label{pol-like}
Let $D$ and $D'$ be simply connected domains in $\C$ such that $\overline{D'}
\subset D$ and let $f:D' \to D$ be a proper holomorphic map. Then $f$ has a
weakly repelling fixed point in $D'$.
\end{thm}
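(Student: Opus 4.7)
My plan is to split into cases according to the degree $d$ of the proper holomorphic map $f: D' \to D$. In the case $d = 1$, the map $f$ is a conformal isomorphism and $f^{-1}: D \to D'$ is a holomorphic self-map of $D$ whose image has compact closure inside $D$. Since $\overline{D'}$ is a compact subset of $D$, the domain $D$ is not all of $\C$ and hence is hyperbolic. The Schwarz--Pick Lemma (Lemma~\ref{lemma:schwarz_pick}) then says that $f^{-1}$ strictly contracts the hyperbolic metric of $D$, and on the forward-invariant compact set $\overline{D'}$ the contraction is uniform. A standard Banach-type argument produces a unique fixed point $z_0 \in D'$ of $f^{-1}$ with $|(f^{-1})'(z_0)| < 1$; equivalently $|f'(z_0)| > 1$, so $z_0$ is strictly repelling for $f$.

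For the main case $d \ge 2$, I would invoke the Douady--Hubbard straightening theorem, which furnishes a polynomial $P$ of degree $d$ and a quasiconformal homeomorphism $\phi$ between neighbourhoods of the filled Julia sets $K_f = \bigcap_{n \ge 0} f^{-n}(\overline{D'})$ and $K_P$, conjugating $f$ to $P$. Every fixed point of $f$ in $D'$ lies in $K_f$ and corresponds via $\phi$ to a finite fixed point of $P$. Since $P$ is a rational map of degree at least two, Fatou's Theorem~\ref{thm:fatouwrfp} yields a weakly repelling fixed point of $P$ in $\clC$; because $\infty$ is superattracting for $P$, this point must lie in $\C$, hence in $J(P) \subset K_P$. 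Transporting it by $\phi^{-1}$ produces a candidate fixed point $z_0 \in K_f \subset D'$ of $f$.

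The main obstacle is that $\phi$ is only quasiconformal and therefore does not preserve multipliers: one must argue separately that each of the two flavours of weakly repelling behaviour is invariant under quasiconformal conjugation. Strict repulsion $|\lambda| > 1$ is a purely topological property, since it is characterised by the local attraction of any holomorphic inverse branch, which is preserved by a topological conjugacy. The parabolic case $\lambda = 1$ corresponds to a parabolic fixed point whose multiplier has order $q = 1$ as a root of unity, a datum encoded in the combinatorics of the attracting petals (namely that each petal is fixed rather than cycled by the map), which is again topological and hence preserved by $\phi$. In either case the fixed point $z_0 \in D'$ obtained above is weakly repelling for $f$, completing the argument.
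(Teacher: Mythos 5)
Your proposal follows essentially the same route as the paper's own justification: for degree one, apply Schwarz--Pick (equivalently a hyperbolic-metric contraction argument) to $f^{-1}$ to get a repelling fixed point, and for degree at least two, invoke the Douady--Hubbard Straightening Theorem to reduce to a polynomial and then Fatou's Theorem~\ref{thm:fatouwrfp}. Your extra discussion of why the weakly repelling character survives the (merely quasiconformal) conjugacy is a welcome refinement of a point the paper leaves implicit, and it is correct, e.g.\ via the topological characterisations you indicate together with the snail lemma.
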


Indeed, if $\deg f|_{D'}=1$, then $f$ is invertible and, by Schwarz--Pick's
Lemma \ref{lemma:schwarz_pick} applied to $f^{-1}$, the map $f$ has a repelling fixed point. Otherwise,
$(f,D',D)$ form a polynomial-like map. By the Straightening Theorem (see
\cite{doua-hubb}), $f|_{D'}$ is conjugate to a polynomial and therefore has a weakly
repelling fixed point. 

\begin{thm}[\bf Rational-like maps \cite{buff}] \label{rat-like}
Let $D$ and $D'$ be domains in $\C$ with finite Euler
characteristic, such that $\overline{D'}
\subset D$ and let $f:D' \to D$ be a proper holomorphic map. Then $f$ has a
weakly repelling fixed point in $D'$.
\end{thm}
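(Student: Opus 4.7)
My plan is to deduce Theorem~\ref{rat-like} from the polynomial-like case, Theorem~\ref{pol-like}, by a ``filling-in'' procedure. Both $D$ and $D'$ have finite Euler characteristic, so each has only finitely many bounded connected components in its complement within $\hat{\C}$; I will call such components the \emph{holes} of $D$ or $D'$. Let $\tilde{D}$ denote the simply connected domain in $\hat{\C}$ obtained by adjoining to $D$ all of its holes, and define $\tilde{D'}$ analogously. Since $\overline{D'}\subset D$, every hole of $D'$ either coincides with a hole of $D$ or is entirely contained in $D$, from which one deduces $\overline{\tilde{D'}}\subset \tilde{D}$.

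Next, I would construct a proper holomorphic extension $\tilde{f}\colon\tilde{D'}\to\tilde{D}$ of $f$. For each hole $H$ of $D'$, a small loop $\gamma\subset D'$ encircling $H$ produces an image $f\circ\gamma$ whose winding profile around the holes of $D$ is a topological invariant of $H$. This profile identifies a unique hole $H^\star$ of $D$ together with a unique proper holomorphic extension of $f$ across $H$ into $H^\star$ (the existence of the extension follows from the argument principle together with the properness of $f$ on a collar of $\partial H$). Assembling these extensions over all holes of $D'$ yields $\tilde{f}$ as a proper holomorphic map between the simply connected discs $\tilde{D'}$ and $\tilde{D}$, of the same degree as $f$.

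Applying Theorem~\ref{pol-like} to $\tilde{f}$ gives a weakly repelling fixed point $z_0\in\tilde{D'}$. The main obstacle—and the delicate part of the argument—is to show that $z_0$ actually lies in $D'$ and not in a filled-in hole. If $z_0$ were to lie in a hole $H$ of $D'$, then $\tilde{f}(H)$ would be the hole $H^\star$ of $D$ containing $z_0$, and $\tilde{f}\colon H\to H^\star$ would itself be a proper map between topological discs. I would then proceed by induction on the total number of holes of $D'$ and $D$: one replaces $(D',D)$ by a strictly smaller pair (either by cutting off $H$ from consideration, or by reapplying Theorem~\ref{pol-like} to the restricted proper map together with a Rouch\'e-type argument on a circle separating $H$ from $\partial H^\star$) and iterates until a weakly repelling fixed point is located genuinely in $D'$. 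Finiteness of the hole count guarantees termination. Carrying out this induction, particularly in the degenerate cases where $\tilde{f}$ acts with nontrivial combinatorics on the finite set of holes (for instance when the orbit of $H$ under the hole-permutation of $\tilde{f}$ has length greater than one), is the main conceptual hurdle.
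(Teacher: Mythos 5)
The central step of your plan --- extending $f$ holomorphically across the holes of $D'$ to get a proper map $\tilde f:\tilde{D'}\to\tilde D$ between the filled-in discs --- is not justified, and it is false in general. The argument principle on a collar around a hole $H$ only yields winding-number information; it cannot produce an analytic continuation of $f$ into $H$. A concrete obstruction: let $D$ be a round annulus and let $D'$ be a doubly connected domain, compactly contained in $D$ and of the same modulus, whose bounded complementary component is a straight slit. A conformal isomorphism $f:D'\to D$ is a proper map of degree one satisfying all hypotheses of Theorem~\ref{rat-like}, but the boundary values of $f$ from the two sides of the slit are distinct points of the inner boundary circle of $D$, so $f$ admits no continuous --- let alone holomorphic --- extension across the hole; analogous higher-degree examples exist. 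Note also that in the applications in this paper the holes of $D'$ typically contain poles of the ambient meromorphic map, so even when a natural extension exists it is meromorphic and maps a hole over the outer boundary, not properly onto a hole of $D$; your picture of a hole-to-hole proper extension is exactly what fails in the situations the theorem is designed for. Since Step 2 collapses, the subsequent application of Theorem~\ref{pol-like} and the inductive relocation of the fixed point (which is in any case only sketched, with the ``Rouch\'e-type'' and hole-permutation cases left unresolved) have nothing to act on.

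For comparison, the paper does not reprove this statement at all: it quotes Buff \cite{buff}, whose argument is not a reduction to polynomial-like maps by filling in holes, but a holomorphic fixed point index (Lefschetz-type) count showing the existence of \emph{virtually} repelling fixed points for proper maps $f:D'\to D$ with $\overline{D'}\subset D$ and finite Euler characteristic; weak repulsion is a consequence. If you want a self-contained proof, that index route is the one to pursue; a straightening-based reduction would at minimum require a genuinely different device (e.g.\ quasiconformal surgery in the holes) rather than analytic continuation.
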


Maps with this property are called rational-like (see \cite{roesch}). The proof
of the result above is due to Buff and can be found in \cite{buff}, where he actually shows the
existence of {\em virtually repelling fixed points}, which is a stronger
statement. (Note that in \cite{buff} rational-like maps are assumed to have degree
larger than one. However, the proof  is valid also in
the case of degree one.) 

In the following result, also proved in \cite{buff}, the hypothesis of compact
containment is relaxed. In return, the image  is assumed to be a
disc.

\begin{thm}[\bf Rational-like maps with boundary contact \cite{buff}]
\label{boundarycontact}
Let $D$ be an open Euclidean disc in $\C$ and $D^{\prime}\subset D$ be
a domain with finite Euler characteristic. Let $f:D^{\prime}\to D$ be a
proper map of degree greater than one, such that $|f(z)-z|$ is bounded away
from zero as $z \to \bd D'$. Then $f$ has a weakly repelling fixed point in
$D^{\prime}$.
\end{thm}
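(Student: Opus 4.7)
The plan is to reduce the statement to Theorem~\ref{rat-like} by a mild dilation of $f$ that slightly enlarges the target, thereby restoring compact containment of the source, while the boundary hypothesis will keep the perturbed fixed points trapped in a common compact subset of $D'$. Write $D=\D(z_0,R)$, and for $s>1$ define
\[
g_s(z)=z_0+s(f(z)-z_0),\qquad g_s:D'\longrightarrow D_s:=\D(z_0,sR).
\]
Each $g_s$ is holomorphic and onto $D_s$, and for any compact $K\subset D_s$ the identity $g_s^{-1}(K)=f^{-1}(z_0+s^{-1}(K-z_0))$ together with the properness of $f$ shows that $g_s$ is proper of the same degree $d>1$ as $f$. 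Moreover $\overline{D'}\subset\overline D\subset D_s$, and $D_s$ is a disc, hence has finite Euler characteristic; combined with the finite Euler characteristic of $D'$, this verifies all hypotheses of Theorem~\ref{rat-like} for $g_s:D'\to D_s$ and produces a weakly -- in fact virtually -- repelling fixed point $z_s\in D'$ of $g_s$.

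Next I would use the boundary hypothesis to confine the $z_s$ uniformly in $s$. Any fixed point $z$ of $g_s$ satisfies $f(z)-z=(s^{-1}-1)(z-z_0)$, whence
\[
|f(z)-z|=(1-s^{-1})|z-z_0|\le(s-1)R.
\]
Fix $\epsilon>0$ and an open neighborhood $\NN$ of $\bd D'$ in $D'$ with $|f(z)-z|\ge\epsilon$ on $\NN$ (the boundary hypothesis). Then as soon as $(s-1)R<\epsilon$, every fixed point of $g_s$ -- and, taking $s=1$, every fixed point of $f$ -- lies in the compact set $K:=D'\setminus\NN\subset D'$. Picking $s_n\searrow 1$ and extracting a convergent subsequence yields $z_{s_n}\to z^\ast\in K\subset D'$, and locally uniform convergence $g_{s_n}\to f$ identifies $z^\ast$ as a fixed point of $f$.

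The main obstacle is that weak repulsion is not obviously closed under this limit: the naive derivative estimate $|g_{s_n}'(z_{s_n})|\ge 1$ delivers only $|f'(z^\ast)|\ge 1$, which permits a neutral multiplier of modulus one different from $1$. To bypass this I would rely on the stronger conclusion of Theorem~\ref{rat-like}, namely that $z_s$ is in fact \emph{virtually} repelling in Buff's sense -- a condition formulated through the holomorphic fixed point index rather than the derivative alone. The total index of $g_s$ over its fixed points in $D'$ is a topological invariant depending only on $d$ and the Euler characteristic of $D'$, hence is independent of $s$ and coincides with the total index of $f$ (whose fixed points also lie in $K$). Applying the index-theoretic mechanism from Buff's proof of Theorem~\ref{rat-like} to this common sum then extracts a virtually, and therefore weakly, repelling fixed point of $f$ in $D'$, completing the argument.
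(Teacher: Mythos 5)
First, a point of comparison: the paper does not prove Theorem~\ref{boundarycontact} at all --- it is quoted from Buff's work (as is Theorem~\ref{rat-like}), so your argument has to stand on its own rather than be measured against an internal proof. The first two thirds of your reduction are fine: $g_s=A_s\circ f$ with $A_s(w)=z_0+s(w-z_0)$ is proper of degree $d>1$ onto $\D(z_0,sR)$, and $\overline{D'}\subset \overline{D}\subset\D(z_0,sR)$, so Theorem~\ref{rat-like} applies; the identity $|f(z)-z|=(1-s^{-1})|z-z_0|$ at fixed points of $g_s$ does trap them (and those of $f$) in a compact $K\subset D'$, and a subsequential limit $z^*$ is a fixed point of $f$.

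The gap is in your final paragraph, and it is the heart of the matter. The claim that the total fixed-point index of $g_s$ is a topological invariant determined by $d$ and the Euler characteristic of $D'$ is false: what is locally constant in $s$ is the number of fixed points counted with multiplicity (argument principle on a cycle near $\bd D'$ where $|g_s-\id|$ is bounded below), whereas the sum of holomorphic indices is a complex number that varies with the map; there is no Lefschetz-type identity for proper maps between plane domains. More fundamentally, even the correct statement --- continuity of the total index as $s\to 1^+$ --- cannot close the argument, because ``virtually repelling'' is a strict inequality ($\Re\,\iota<m/2$, i.e.\ $|\lambda|>1$ at a simple fixed point) and strictness is lost in the limit. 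Concretely, $g_s'=sf'$, so the virtually repelling fixed points of $g_s$ you obtain have $|f'(z_s)|>1/s$, and nothing prevents them from converging to a fixed point of $f$ with $|f'(z^*)|=1$, $f'(z^*)\neq 1$ --- a Siegel or Cremer point, which is not weakly repelling. Excluding the scenario in which \emph{every} fixed point of $f$ is of this indifferent type is precisely the content of the theorem, and it is exactly where the hypothesis that the target is a round disc enters Buff's proof, via a genuine estimate of $\Re\frac{1}{2\pi i}\oint\frac{dz}{z-f(z)}$ against the fixed-point count. Your appeal to ``the index-theoretic mechanism from Buff's proof'' applied to ``this common sum'' is therefore circular: Theorem~\ref{rat-like} as stated yields one virtually repelling fixed point of each $g_s$, not a sum inequality, and the sum inequality you would need for $f$ itself is the unproved core of Theorem~\ref{boundarycontact}. (Note also that the perturbation necessarily goes the ``wrong way'': enlarging the target multiplies multipliers by $s>1$, so indifferent points of $f$ may look repelling for $g_s$; a contraction would fix the limit argument but destroys the compact containment needed for Theorem~\ref{rat-like}. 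This is the structural reason a soft limiting argument cannot substitute for Buff's estimate.)
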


By a meromorphic map on a domain $D \subset \clC$ we mean an analytic map from
$D$ to $\clC$. The result above implies the following corollary.

\begin{cor}[\bf Rational-like maps with boundary contact]\label{cor:boundarycontact}
Let $D$ be a simply connected domain in $\clC$ with locally connected
boundary and $D^{\prime}\subset D$ a domain in $\clC$ with finite Euler
characteristic. Let $f$ be a continuous map on the closure of $D'$ in $\clC$,
meromorphic in $D'$, such that $f:D^{\prime}\to D$ is proper. 
If $\deg f > 1$ and $f$ has no fixed points in $\partial D \cap \partial D'$, or $\deg f = 1$ and $D \ne D^{\prime}$,
then $f$ has a weakly repelling fixed point in
$D^{\prime}$. 
\end{cor}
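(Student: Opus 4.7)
My plan is to reduce the corollary to Theorem~\ref{boundarycontact} (Buff's rational-like maps with boundary contact) by passing to the unit disc via a Riemann map, after disposing of two degenerate cases. If $D = \clC$, continuity and properness of $f$ force $D' = \clC$ (otherwise $f(\partial D')\subset \partial D = \emptyset$ would be impossible), so $f$ is a rational map, the $\deg f > 1$ case follows from Theorem~\ref{thm:fatouwrfp}, and $\deg f = 1$ is excluded by $D\ne D'$. If $\partial D$ consists of a single point, a M\"obius change of coordinates gives $D = \C$; the hypothesis that $\infty \in \partial D\cap \partial D'$ is not a fixed point of $f$, together with $f(\partial D')\subset \partial D$ from properness, forces $\infty \notin \partial D'$, so $\overline{D'}$ is compact in $D$ and Theorem~\ref{rat-like} applies directly.

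Otherwise $\partial D$ has at least two points, and after a M\"obius transformation we may assume that $D$ is a bounded simply connected subdomain of $\C$. By Carath\'eodory's theorem (using the local connectivity of $\partial D$), the Riemann map $\phi:D \to \D$ extends continuously to $\overline\phi:\overline D \to \overline \D$, and conjugation produces a proper holomorphic map $\tilde f := \phi\circ f\circ\phi^{-1}:\tilde D' \to \D$ on $\tilde D' := \phi(D')$, extending continuously to $\overline{\tilde D'}\subset \overline \D$, with the same degree and Euler characteristic as $f$. For $\deg f > 1$, applying Theorem~\ref{boundarycontact} with the pair $(\tilde D',\D)$ reduces to showing that $\tilde f$ has no fixed point on $\partial\tilde D'$; by compactness of $\overline{\tilde D'}$ and continuity of $\tilde f$, this gives $|\tilde f(z)-z|$ bounded below near $\partial \tilde D'$. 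A fixed point $z^* \in \partial\tilde D'\cap \D$ transfers via the homeomorphism $\phi|_D$ to a fixed point of $f$ in $\partial D'\cap D$, which is impossible since properness forces $f(\partial D'\cap D)\subset \partial D$. A fixed point $z^* \in \partial\tilde D'\cap \partial \D$ is approximated by $z_n\in \tilde D'$ with $\tilde f(z_n)\to z^*$; the preimages $w_n := \phi^{-1}(z_n)\in D'$ subsequentially converge to some $w^*\in \partial D\cap \partial D'$, and continuity of $f$ and $\overline\phi$ yields $\overline\phi(f(w^*))=\overline\phi(w^*)$, which at a point where $\overline\phi$ is injective on $\partial D$ forces $f(w^*)=w^*$, contradicting the hypothesis. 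The weakly repelling fixed point of $\tilde f$ produced by Theorem~\ref{boundarycontact} then pulls back under $\phi^{-1}$ to one for $f$ in $D'$.

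The case $\deg f = 1$ with $D\ne D'$ is treated directly: the inverse $g := f^{-1}$ is a holomorphic self-map of $D$ with image $D'\subsetneq D$, hence not an automorphism of $D$, and Lemma~\ref{lemma:schwarz_pick} makes it a strict contraction of the hyperbolic metric of $D$. Theorem~\ref{thm:denjoy} produces a Denjoy--Wolff point $z_0\in \overline D$ with $g^n\to z_0$; if $z_0\in D$ then $z_0 = g(z_0)\in g(D) = D'$ is an attracting fixed point of $g$, equivalently a (weakly) repelling fixed point of $f$ in $D'$. The main obstacle throughout is the non-Jordan portion of $\partial D$: when $\overline\phi$ identifies distinct boundary points (as happens at cut points of $\partial D$), the step \emph{``fixed point of $\tilde f$ on $\partial \tilde D'\cap \partial \D$ yields a fixed point of $f$ on $\partial D\cap \partial D'$''} is not automatic, and bridging this gap relies essentially on the assumed continuity of $f$ on $\overline{D'}$ together with a prime-end analysis of the Carath\'eodory extension.
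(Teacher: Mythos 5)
There is a genuine gap, and you have in fact flagged it yourself without closing it. Your reduction conjugates by a map $\overline\phi:\overline D\to\overline\D$, i.e.\ by a continuous extension of the \emph{inverse} Riemann map, and you use it twice: to extend $\tilde f$ continuously to $\overline{\tilde D'}$ (so that ``no fixed point on $\partial\tilde D'$'' plus compactness gives the lower bound required by Theorem~\ref{boundarycontact}), and to transfer a putative fixed point $z^*\in\partial\tilde D'\cap\partial\D$ back to a fixed point of $f$ via $\overline\phi(f(w^*))=\overline\phi(w^*)$. But local connectivity of $\partial D$ only gives, by Carath\'eodory, a continuous extension of the \emph{forward} Riemann map $\varphi:\D\to D$ to $\overline\D$; the inverse extends continuously to $\overline D$ only when $\partial D$ is a Jordan curve. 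When $\partial D$ has cut points (a slit domain, or the complementary components arising in the paper's applications), $\overline\phi$ simply does not exist, and your final transfer step also needs injectivity of the boundary extension, which likewise fails at cut points. You acknowledge that this ``is not automatic'' and defer to ``a prime-end analysis,'' but that analysis is precisely the missing content, so the $\deg f>1$ case is not proved. The paper's proof shows the detour is unnecessary: it sets $g=\varphi^{-1}\circ f\circ\varphi$ on $\varphi^{-1}(D')$ (the inverse is only ever used \emph{inside} $D$) and verifies the actual hypothesis of Theorem~\ref{boundarycontact} --- the quantitative bound that $|g(z)-z|$ stays away from $0$ near $\partial(\varphi^{-1}(D'))$, not boundary-fixed-point-freeness of an extension --- by a limiting argument: if $z_n\to z\in\partial(\varphi^{-1}(D'))$ with $|z_n-g(z_n)|\to 0$, then $\varphi(g(z_n))=f(\varphi(z_n))$, continuity of $\varphi$ on $\overline\D$ gives $\varphi(g(z_n))\to\varphi(z)$, continuity of $f$ on $\overline{D'}$ gives $f(\varphi(z_n))\to f(\varphi(z))$, hence $f(\varphi(z))=\varphi(z)$, and properness forces $\varphi(z)\in\partial D\cap\partial D'$, contradicting the hypothesis. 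No inverse boundary extension, no injectivity, no prime ends.

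The $\deg f=1$ case is also left open in your write-up: Denjoy--Wolff only gives $g^n\to z_0$ for some $z_0\in\overline D$, and you treat only $z_0\in D$, saying nothing when $z_0\in\partial D$, so no fixed point is produced in that case. The paper argues differently here: by Riemann--Hurwitz $D'$ is simply connected and $f$ is invertible, and the repelling fixed point is obtained from Schwarz--Pick's Lemma~\ref{lemma:schwarz_pick} applied to $f^{-1}$. (Your preliminary reductions for $D=\clC$ and for $\partial D$ a single point are fine, but they do not touch these two gaps.)
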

\begin{proof} Suppose $\deg f > 1$. Changing the coordinates in $\clC$ by a
M\"obius transformation, we can assume $D \subset \C$. Let $\varphi$ be a
Riemann map from the unit disc $\D$ onto $D$. Since the boundary of $D$ is
locally connected, the map $\varphi$ extends continuously to $\overline{\D}$.
Let $g = \varphi^{-1}\circ f \circ \varphi$ on $\varphi^{-1}(D')$. Then
$g: \varphi^{-1}(D') \to \D$ satisfies the assumptions of
Theorem~\ref{boundarycontact}. Indeed, one should
only check that $|g(z)-z|$ is bounded away from zero as $z \to \bd
(\varphi^{-1}(D'))$. If it was not the case, then there would exists a sequence
$z_n \in \varphi^{-1}(D')$ with $z_n \to \bd(\varphi^{-1}(D'))$ and
$|z_n - g(z_n)| \to 0$. We can assume $z_n \to z \in \bd(\varphi^{-1}(D'))$.
Then $g(z_n) \to z$, $\varphi(z_n)
\to \varphi(z)$ and $\varphi(z)$ is in the boundary of $D'$, so
$f(\varphi(z_n))=
\varphi(g(z_n)) \to \varphi(z)$ and $f(\varphi(z)) = \varphi(z)$. Since
$f:D'\to D$ is proper, $\varphi(z)$ is in the boundary of $D$, so $\varphi(z)$
is a fixed point of $f$ in the intersection of
the boundaries of $D$ and $D'$, which contradicts the
assumptions of the corollary.

If $\deg f = 1$, then by the Riemann--Hurwitz Formula, $D^{\prime}$ is simply connected 
and $f$ is invertible, so the existence of a repelling fixed
point of $f$ follows from Schwarz--Pick's Lemma \ref{lemma:schwarz_pick}  applied to $f^{-1}$. 
\end{proof}

To apply this corollary we have to ensure the local connectedness of the boundary of the domain. To do so we shall often use the following result due to Torhorst.

\begin{thm}[\bf{Torhorst's Theorem {\cite[p.~106, Theorem 2.2]{whyburn}}}] \label{theorem:torhorst}
If $X$ is a locally connected continuum in $\clC$, then
the boundary of every component of $\clC \setminus X$ is a locally connected
continuum.
\end{thm}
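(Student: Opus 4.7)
The plan is to establish two things separately for a fixed component $U$ of $\clC\setminus X$: that $\partial U$ is a continuum, and that it is locally connected.

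First I would observe that $\partial U$ is closed in the compact $\clC$ and hence compact, so only connectedness requires work. For that I would show that $\clC\setminus U$ is a continuum and then invoke the classical fact that the boundary of a complementary component of a continuum in $\clC$ is itself connected. Writing $\clC\setminus U = X \cup \bigcup_{V\neq U}\overline{V}$, where $V$ ranges over the other components of $\clC\setminus X$, each such $V$ is open with $\partial V\subset X$, so $\overline V$ is connected and meets $X$; thus $\clC\setminus U$ is a union of connected sets all meeting the connected set $X$, which yields connectedness, while compactness is immediate.

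For local connectedness of $\partial U$ the key tool I would invoke is that the locally connected continuum $X$ is \emph{uniformly} locally connected: for every $\epsilon>0$ there is $\delta>0$ such that any two points of $X$ within distance $\delta$ lie in a connected subset of $X$ of diameter less than $\epsilon$. Given $p\in\partial U$ and $\epsilon>0$, I would take the associated $\delta$ and consider the component $N$ of $p$ in $\partial U\cap\overline{\D(p,\delta)}$. Arguing by contradiction, if $N$ is not a neighborhood of $p$ in $\partial U$ then one finds $q_n\in\partial U$ with $q_n\to p$ and $q_n\notin N$, and uniform local connectedness supplies connected subsets $A_n\subset X$ containing $p$ and $q_n$ with $\diam A_n<\epsilon$.

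The principal obstacle is that $A_n$ lies in $X$ but not necessarily in $\partial U$, since $X$ may carry boundary points of other complementary components as well. To overcome this I would follow the standard Torhorst modification: whenever a subarc of $A_n$ enters the closure of some complementary component $V'\neq U$, I would replace that subarc by a connected piece of $X$ running along the frontier of $V'$ of comparable diameter, obtained by invoking the uniform local connectedness of $X$ once more at a smaller scale (and using that for large $n$ only finitely many $V'$ have diameter bounded away from zero near $p$). Iterating yields a connected subset of $\partial U$ joining $p$ to $q_n$ of diameter still less than $\epsilon$, contradicting $q_n\notin N$ for large $n$. Controlling this replacement uniformly is the technical heart of the theorem and is precisely where the local connectedness hypothesis on $X$ is indispensable; the full details are carried out in Whyburn's monograph.
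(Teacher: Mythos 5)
The paper offers no proof of this statement at all --- it is quoted directly from Whyburn's monograph --- so the only question is whether your argument stands on its own, and it does not. The first half is fine: $\partial U$ is compact, and its connectedness follows from the classical unicoherence-type fact about complementary components of continua in $\clC$ that you invoke (your decomposition $\clC\setminus U = X\cup\bigcup_{V\neq U}\overline{V}$ is a harmless detour around applying that fact to $X$ and $U$ directly). Likewise, uniform local connectedness of the Peano continuum $X$ is a legitimate standard tool.

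The genuine gap is in the second half, which is the actual content of Torhorst's theorem. Note first that your phrasing is off: since $A_n\subset X$, these sets never ``enter'' a complementary component $V'$; the real difficulty is that $A_n$ need not lie in $\partial U$, because it may pass through points of $X$ belonging only to the boundaries of \emph{other} complementary components, or to the interior of $X$. The theorem is precisely the assertion that such a connecting set can be traded for a connected subset of $\partial U$ of comparable diameter, and your ``Torhorst modification'' supplies no construction of the replacement pieces, no argument that they lie in $\partial U$ rather than merely in $X$, and no control over the accumulation of the (possibly infinitely many) replacements; even the parenthetical claim that only finitely many complementary components have diameter bounded below is a nontrivial consequence of property S that would itself need proof. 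Since you close by deferring ``the full details'' to Whyburn, the proposal in the end amounts to the same citation the paper makes, rather than a proof of the local connectedness of $\partial U$.
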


We conclude this section stating  a surgery result due to Shishikura, which will be generalized in Section~\ref{section:configurations} (see Proposition~\ref{mapin}).

\begin{thm}[\bf Shishikura {\cite[Theorem 2.1]{shishikura}}] \label{shishikura}
Let $V_0, V_1$ be simply connected domains in $\clC$ with $V_0\neq \clC$ and
let $f$ be a meromorphic map in a neighbourhood $N$ of $\clC \setminus
V_0$, such that $f(\partial V_0)=\partial V_1$ and $f(V_0\cap N)\subset V_1$.
Suppose that for some $k\geq 1$, the map $f^k$ is defined on $V_1$, such that 
\[
f^j(V_1)\cap V_0  = \emptyset \text{ for } j = 0, \ldots, k - 1\quad
\text{and}\quad f^k(\overline{V_1})\subset V_0.
\]
Then $f$ has a weakly repelling fixed point in $\clC \setminus \overline{V_0}$.
\end{thm}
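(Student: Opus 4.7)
The plan is to carry out Shishikura's quasi-conformal surgery: modify $f$ on $V_0$ to produce a globally quasi-regular map $g$ on $\widehat{\C}$ with an attracting periodic cycle that passes through $V_0$, straighten $g$ to a rational map via the Measurable Riemann Mapping Theorem, apply Fatou's Theorem~\ref{thm:fatouwrfp} to obtain a weakly repelling fixed point of that rational map, and transfer it back to $f$.

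To construct $g$, I use that $V_0\neq\widehat{\C}$ is simply connected and fix Riemann maps $\varphi_i:\D\to V_i$ for $i=0,1$. The hypotheses $f(V_0\cap N)\subset V_1$ and $f(\partial V_0)=\partial V_1$ translate, via $\varphi_0$ and $\varphi_1$, into the statement that $f$ already acts as a holomorphic map from an annular neighborhood of $\partial\D$ into $\D$. Interpolating this boundary behavior with, say, $z\mapsto z^d$ near the origin gives a quasi-regular proper self-map of $\D$, which can be pushed back through the uniformizers to a quasi-regular proper map $\tilde f:V_0\to V_1$ that coincides with $f$ on an annular collar of $\partial V_0$. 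Define $g:=f$ on a neighborhood of $\widehat{\C}\setminus V_0$ and $g:=\tilde f$ on $V_0$. Then $g$ is continuous and quasi-regular on $\widehat{\C}$, holomorphic off a compact set $K$ sitting in a relatively compact subset of $V_0$, and the assumptions yield the cycle of pairwise disjoint domains
\[
V_0\xrightarrow{g} V_1\xrightarrow{g}g(V_1)\xrightarrow{g}\cdots\xrightarrow{g}g^k(V_1)\subset V_0,
\]
which is attracting under $g^{k+1}$.

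Next I would construct a $g$-invariant Beltrami coefficient $\mu$ with $\|\mu\|_\infty<1$ by setting $\mu\equiv 0$ outside the grand orbit of $V_0$ and spreading it by the equation $g^*\mu=\mu$. The uniform bound on dilatation follows because any $g$-orbit visits the quasi-regular region $K$ at most once per period $k+1$: elsewhere $g=f$ is holomorphic, and the cycle domains $V_0,V_1,\ldots,g^k(V_1)$ are pairwise disjoint. The Measurable Riemann Mapping Theorem then yields a quasi-conformal homeomorphism $\phi:\widehat{\C}\to\widehat{\C}$ integrating $\mu$, so that $R:=\phi\circ g\circ\phi^{-1}$ is holomorphic on $\widehat{\C}$ and hence a rational map of degree at least two (the attracting cycle rules out $\deg R=1$). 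Fatou's Theorem~\ref{thm:fatouwrfp} produces a weakly repelling fixed point $p$ of $R$, and I set $q:=\phi^{-1}(p)$.

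Because $g(V_0)\subset V_1$ and the $k+1$ cycle domains are pairwise disjoint, $g$ has no fixed point in their union, so $q$ lies outside this cycle. Since $K$ is compactly contained in $V_0$, $\phi$ is conformal in a neighborhood of $\widehat{\C}\setminus K$, and consequently the multipliers of $R$ at $p$ and of $g$ at $q$ coincide, making $q$ weakly repelling for $g$. As $g=f$ on a neighborhood of $\widehat{\C}\setminus V_0$, $q$ is a weakly repelling fixed point of $f$ in $\widehat{\C}\setminus\overline{V_0}$. The main obstacle in the plan is producing the invariant Beltrami coefficient with uniformly bounded dilatation: orbits of $g$ visit $V_0$ infinitely often, so a naive pull-back would compound distortion without bound, and the delicate point is to exploit the disjointness of the cycle and the single quasi-regular iterate per period to keep $\|\mu\|_\infty<1$. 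A secondary technical issue is matching $\tilde f$ to $f$ quasi-regularly across $\partial V_0$ and excluding the degenerate situation $q\in\partial V_0$, which can be handled by a small enlargement of the collar if needed.
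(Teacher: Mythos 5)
This theorem is only quoted in the paper (from Shishikura), so the relevant comparison is with the original quasiconformal surgery proof, and your plan does follow that same route: modify $f$ inside $V_0$ to create an attracting cycle of domains, straighten by the Measurable Riemann Mapping Theorem, invoke Fatou's Theorem~\ref{thm:fatouwrfp}, and transfer back. The genuine gap is exactly the step you flag yourself: the construction of a $g$-invariant Beltrami coefficient with $\|\mu\|_\infty<1$. Your observation that an orbit meets the quasi-regular set $K$ ``at most once per period $k+1$'' gives no bound at all: by construction $g^{k+1}(V_0)\subset f^k(\overline{V_1})\subset V_0$, so an orbit can hit $K$ once in \emph{every} period, i.e.\ infinitely often, and the naive pull-back definition of $\mu$ then compounds the dilatation of $\tilde f$ without bound. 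With your choice of $\tilde f$ (interpolation with $z\mapsto z^d$, non-holomorphic on an uncontrolled compact $K\subset V_0$) there is no reason the returns $f^k(\overline{V_1})$ avoid $K$, so the straightening is not justified and the argument as written stops at its central point. The repair on which the surgery actually hinges is to place the non-holomorphic gluing in a thin collar $A$ around $\partial V_0$ chosen \emph{disjoint} from the compact set $f^k(\overline{V_1})$, taking $g$ holomorphic on $V_0\setminus A$ with image in $V_1$; then after its (at most one) passage through $A$ an orbit is trapped in the cycle $V_1\to f(V_1)\to\cdots\to f^k(\overline{V_1})\subset V_0\setminus A\to V_1$, never meeting $A$ again, so the pulled-back structure has dilatation bounded by that of $g$ alone and the Measurable Riemann Mapping Theorem applies.

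Two further points in the transfer step need fixing. First, it is not true that $\phi$ is conformal on a neighbourhood of $\clC\setminus K$: the invariant coefficient $\mu$ is non-zero on the whole backward orbit $\bigcup_{n\ge 0}g^{-n}(A)$, which in general accumulates on the Julia set of $R$, hence near repelling fixed points. One must instead argue that this backward orbit lies in the basin of the attracting cycle created by the surgery, so that $\phi$ is conformal on a neighbourhood of the relevant fixed point, or otherwise show that the quasiconformal conjugacy preserves the property of being weakly repelling (the indifferent case $\lambda=1$ needs a separate word). Second, the degenerate case $q\in\partial V_0$ is excluded not by ``enlarging the collar'' but directly from the hypotheses: a fixed point $z\in\partial V_0$ would satisfy $z\in f(\partial V_0)=\partial V_1\subset\overline{V_1}$, whence $z=f^k(z)\in f^k(\overline{V_1})\subset V_0$, contradicting $z\in\partial V_0$.
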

 
See Figure \ref{fig:shishi}.

\begin{center}
\begin{figure}[hbt!]
\def\svgwidth{0.4\textwidth}
\begingroup%
  \makeatletter%
  \providecommand\color[2][]{%
    \errmessage{(Inkscape) Color is used for the text in Inkscape, but the package 'color.sty' is not loaded}%
    \renewcommand\color[2][]{}%
  }%
  \providecommand\transparent[1]{%
    \errmessage{(Inkscape) Transparency is used (non-zero) for the text in Inkscape, but the package 'transparent.sty' is not loaded}%
    \renewcommand\transparent[1]{}%
  }%
  \providecommand\rotatebox[2]{#2}%
  \ifx\svgwidth\undefined%
    \setlength{\unitlength}{493.76936035bp}%
    \ifx\svgscale\undefined%
      \relax%
    \else%
      \setlength{\unitlength}{\unitlength * \real{\svgscale}}%
    \fi%
  \else%
    \setlength{\unitlength}{\svgwidth}%
  \fi%
  \global\let\svgwidth\undefined%
  \global\let\svgscale\undefined%
  \makeatother%
  \begin{picture}(1,0.63116512)%
    \put(0,0){\includegraphics[width=\unitlength]{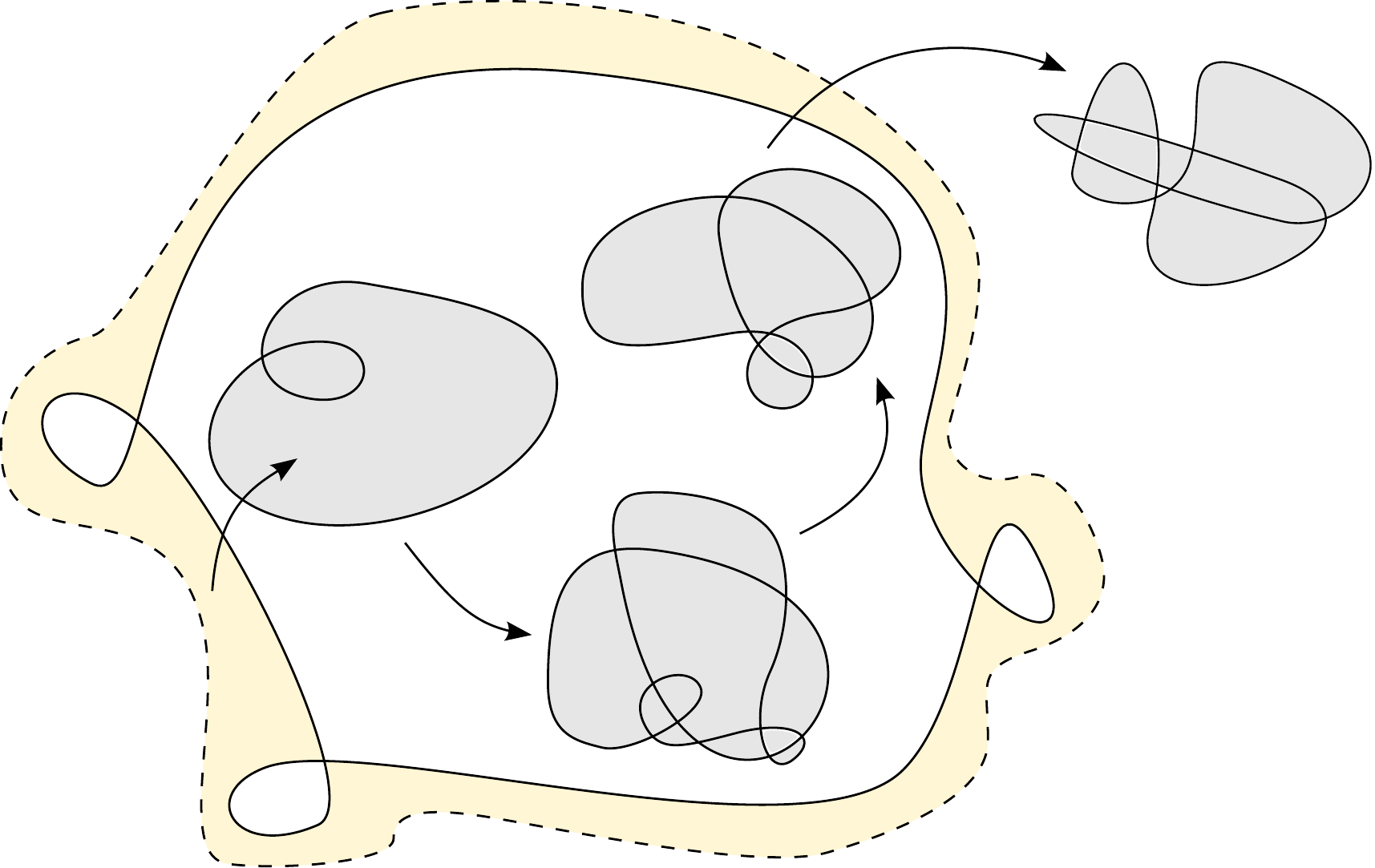}}%
    \put(0.0492187,0.49329894){\color[rgb]{0,0,0}\makebox(0,0)[lb]{\smash{$V_0$}}}%
    \put(0.28297516,0.32799975){\color[rgb]{0,0,0}\makebox(0,0)[lb]{\smash{$V_1$}}}%
    \put(0.82562393,0.59848934){\color[rgb]{0,0,0}\makebox(0,0)[lb]{\smash{$f^k³(V_1)$}}}%
    \put(0.26627826,0.58267111){\color[rgb]{0,0,0}\makebox(0,0)[lb]{\smash{\ss $N\cap V_0$}}}%
  \end{picture}%
\endgroup%
\caption{ Setup of Theorem~\ref{shishikura}.} 
\label{fig:shishi}
\end{figure}
\end{center}


\section{Proof of Theorem A} \label{sec:proof_thmA}

The general setup for this section is the following. Let $U$ be a hyperbolic domain in $\C$. Then there exists a
holomorphic universal covering $\pi$ from $\HH$ onto $U$. Take a holomorphic
map $F: U \to U$ as in
Theorem~A. Then $F$ can be lifted to a holomorphic map $G: \HH \to \HH$, such
that
\[
F\circ\pi = \pi\circ G.
\]
Since $F$ has no fixed points, the map $G$ has no fixed points either, so by
the Denjoy--Wolff's Theorem \ref{thm:denjoy}, conjugating $G$ by a suitable M\"obius
transformation preserving $\HH$, we can assume that $G^n \to
\infty$ as $n \to \infty$.
Hence, by Cowen's Theorem \ref{thm:cowen}, $G$ is semi-conjugated to a M\"obius
transformation $T$ on $\Omega$ by a map $\varphi$, which is univalent on $V$.
In other words, we have the following commutative diagram.
\[
\begin{CD}
\varphi(V) @. \;\; \subset \;\;@. \Omega @>T>> \Omega\\
@VV{\varphi^{-1}}V @. @AA\varphi A @AA\varphi A\\
V @. \;\; \subset \;\; @. \HH @>G>> \HH\\
@. @. @VV\pi V @VV\pi V\\
@. @. U @>F>> U
\end{CD}
\]
We use the above notation throughout the proof.

Since the proof of Theorem~A  is rather technical, we first briefly discuss its geometric ideas.

We will define the absorbing set $W$ as the projection $W=\pi (\varphi^{-1} (A))$ of a suitable domain $A \subset \varphi(V)$, which is absorbing for $T$. Then one can easily show that $W$ is absorbing for $F$. However, we should be careful to define $A$ sufficiently ``thin'', so that $\overline{W} \subset U$ and $\bigcap_{n=1}^\infty F^n(\overline{W}) =\emptyset$ (a~priori, we could have e.g.~$W = U$). 

Notice that the map $T$ is an isometry in the hyperbolic metric in $\H$ (in the case $\Omega = \H$) or the Euclidean metric in $\C$ (in the case $\Omega = \C$). Hence, the idea is to define $A$ (in the case $\Omega = \H$) in the form
\[
A=\bigcup_{n\geq m} \DD_{\HH} (T^n(\omega),c_n)
\]
for a point $\omega\in\Omega$ and a suitable sequence $c_n$ which increases to $\infty$ sufficiently slowly (in the case $\Omega=\mathbb C$ we take Euclidean discs instead of hyperbolic ones). Then we show that $\overline{A} \subset \varphi^{-1} (V)$, $A$ is absorbing for $T$ and (by Schwarz--Pick's Lemma), $T(\overline{A})\subset A$. Moreover, taking a suitable sequence $c_n$, we can achieve
\[
\overline{A} \subset \bigcup_{n\geq m} \DD_{\varphi(V)} (T^n(\omega),b_n) 
\]
for any given sequence $b_n$ with $b_n \to \infty$. (Notice that since $V \subset \H$ is simply connected and $\varphi$ is univalent, the set $\varphi(V)$ is simply connected and $\varphi(V) \subsetneq \C$, so $\varphi(V)$ is hyperbolic.) The precise construction of the suitable domain $A$ will be done in Proposition~\ref{prop:star}.

Then, using Schwarz--Pick's Lemma, for any $z_0 \in U$ and any sequence $r_n$ with $r_n\to \infty$ we will choose $\omega$ and $b_n$ such that
\[
W = \pi (\varphi^{-1} (A))\subset \bigcup_{n\geq 0} \DD_U (F^n(z_0),r_n). 
\]
Taking $r_n$ converging to $\infty$ slowly enough, depending on the speed of escaping of $F^n(z_0)$ to $\infty$, we will show that $W$ is sufficiently ``thin'' to satisfy the assertions of Theorem~A. Notice that although we construct $A$ to be simply connected, the set $W$ will not be in general simply connected, unless $U$ is simply connected. 

The construction of the absorbing domain $A$ is done in the following proposition.

\begin{prop}[\bf{Absorbing domains in $\boldsymbol \Omega$}]\label{prop:star} Under the notation of Cowen's Theorem~{\rm \ref{thm:cowen}}, for every $\omega \in\Omega$ and every sequence of positive numbers $\{b_n\}_{n\geq 0}$ with $\lim_{n\to \infty} b_n = \infty$, there exist
$m\in\N$ and a simply connected domain $A \subset \Omega$ with the following properties: 
\begin{itemize}
\item[$(a)$] $\overline{A} \subset \bigcup_{n=m}^\infty \DD_{\varphi(V)}(T^n(\omega), b_n)\subset \varphi(V)$,
\item[$(b)$] $T(\overline{A}) \subset A$,
\item[$(c)$] $A$ is absorbing for $T$ in $\Omega$.
\end{itemize}
Moreover, if $\Omega = \C$, $T(\omega) = \omega +1$, then for every $\omega \in\Omega$ and $b > 0$ there exist a sequence $\{b_n\}_{n\geq 0}$ with $b_n < b$ and $\lim_{n\to \infty} b_n = 0$, a number $m\in\N$ and a simply connected domain $A \subset \Omega$, such that the conditions $(a)$--$(c)$ are satisfied.
\end{prop}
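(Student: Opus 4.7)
The strategy is to construct $A$ as an increasing union of metric balls along the forward orbit of $\omega$,
\[
A \;=\; \bigcup_{n=m}^\infty \DD(T^n(\omega), c_n),
\]
where $\DD$ denotes the hyperbolic disc of $\H$ when $\Omega=\H$ and the Euclidean disc $\D$ when $\Omega=\C$, and $\{c_n\}$ is a positive strictly increasing sequence with $c_n\to+\infty$ to be chosen. In each of the three cases of Cowen's Theorem~\ref{thm:cowen}, $T$ is an isometry of the ambient metric on $\Omega$ (hyperbolic on $\H$, Euclidean on $\C$), and the orbit $\{T^n(\omega)\}$ lies on a simple smooth curve escaping to infinity; in particular consecutive orbit points are at a fixed ambient-metric distance $d_0:=\varrho_\Omega(\omega,T(\omega))$.

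The key preliminary observation is that, since $\varphi(V)$ is absorbing in $\Omega$ for $T$ and $T$ is an $\Omega$-isometry, the ambient-metric distance from $T^n(\omega)$ to $\bd\varphi(V)$ tends to $+\infty$: applying the absorbing property to the closed ball $\overline{\DD_\Omega(\omega,R)}$ together with $T^n(\DD_\Omega(\omega,R))=\DD_\Omega(T^n(\omega),R)$ gives $\DD_\Omega(T^n(\omega),R)\subset\varphi(V)$ for every fixed $R>0$ and all sufficiently large $n$. I would then pick $c_n\to+\infty$ slowly enough that $\overline{\DD(T^n(\omega),2c_n)}\subset\varphi(V)$ and $\overline{\DD(T^n(\omega),c_n)}\subset\DD_{\varphi(V)}(T^n(\omega),b_n)$ for all $n\geq m$. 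In the case $\Omega=\H$, the inclusion $\DD_\H(T^n(\omega),2c_n)\hookrightarrow\varphi(V)$ and Schwarz--Pick bound $\varrho_{\varphi(V)}\leq\varrho_{\DD_\H(T^n(\omega),2c_n)}$ on $\DD_\H(T^n(\omega),c_n)$, and the latter density is comparable to $\varrho_\H$ by a factor tending to $1$ as $c_n\to\infty$; integrating along the $\H$-geodesic keeps the $\varphi(V)$-distance below $b_n$ once, say, $c_n<b_n/2$. In the case $\Omega=\C$, the estimate $\varrho_{\varphi(V)}(z)\leq 2/\dist(z,\bd\varphi(V))$ from Lemma~\ref{lem:CG}, integrated along the Euclidean segment from $T^n(\omega)$, gives $\varphi(V)$-distance at most $O(c_n/R_n)$ with $R_n:=\dist(T^n(\omega),\bd\varphi(V))\to\infty$, which beats $b_n$ when $c_n$ is of order smaller than $b_n R_n$.

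With such $\{c_n\}$ in hand, the remaining properties are essentially formal. Property~(b) follows from the isometry of $T$: $T(\overline{\DD(T^n(\omega),c_n)})=\overline{\DD(T^{n+1}(\omega),c_n)}\subset\DD(T^{n+1}(\omega),c_{n+1})\subset A$ as soon as $c_{n+1}>c_n$. Property~(c) follows because an arbitrary compact $K\subset\Omega$ lies in $\DD_\Omega(\omega,D_K)$ for some $D_K$, so $T^n(K)\subset\DD_\Omega(T^n(\omega),D_K)\subset\DD(T^n(\omega),c_n)\subset A$ once $c_n\geq D_K$. Simple connectedness holds because the convex metric balls overlap consecutively (ensured by $c_n+c_{n+1}>d_0$ for large $n$) and are arranged along a simple escaping curve without wrapping, so their union deformation-retracts onto the underlying arc.

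For the ``moreover'' statement in the case $\Omega=\C$, $T(\omega)=\omega+1$, I would run the same construction in the reverse regime: take $c_n$ of fixed size (say $c_n=1$, so consecutive Euclidean unit discs along the horizontal orbit overlap), and set $b_n:=C/R_n$ with $C$ the constant from the Euclidean estimate above. Then $b_n\to 0$, and enlarging $m$ gives $b_n<b$ together with $\overline{\D(T^n(\omega),1)}\subset\DD_{\varphi(V)}(T^n(\omega),b_n)\subset\varphi(V)$ for $n\geq m$. Properties~(a)--(c) and simple connectedness follow exactly as in the main case. I expect the main technical obstacle to be property~(a), whose two requirements -- the $\overline{\DD}$-discs fitting inside $\varphi(V)$ and inside the prescribed $\DD_{\varphi(V)}$-discs -- force a careful quantitative balancing between the growth of $c_n$ and the rate at which $\varphi(V)$ opens up around the orbit; by contrast, once $\{c_n\}$ is fixed, properties~(b), (c) and simple connectedness are essentially immediate from $T$ being an $\Omega$-isometry and the orbit lying on a simple curve escaping to infinity.
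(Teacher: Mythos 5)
Your overall skeleton is the same as the paper's: take $A=\bigcup_{n\ge m}\DD(T^n(\omega),c_n)$ along the orbit, get the inclusions $\overline{\DD(T^n(\omega),c_n)}\subset\DD_{\varphi(V)}(T^n(\omega),b_n)\subset\varphi(V)$ from the absorbing property of $\varphi(V)$ plus Schwarz--Pick, and use convexity and consecutive overlap for simple connectedness. The genuine gap is precisely in the step you declare ``essentially formal''. Properties (a) and (b) are statements about $\overline{A}$, and the closure of an infinite union is in general strictly larger than the union of the closures; your per-disc inclusions only control $\bigcup_n\overline{\DD(T^n(\omega),c_n)}$. Your constraints on $c_n$ (small compared with $b_n$ and with the ``opening'' $d_n$ of $\varphi(V)$ around the orbit) do not exclude extra limit points. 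Concretely, take $\Omega=\HH$, $T(\omega)=\omega+i$, $G=T$, $V=\HH$, $\varphi=\id$, so $\varphi(V)=\HH$ and both of your conditions reduce to $c_n<b_n$; with $\omega=1$ and $b_n=n$ you are allowed to choose $c_n=n/2$, and then $\overline{\DD_\HH(1+ni,\,n/2)}$ is the Euclidean disc centred at $\cosh(n/2)+ni$ of radius $\sinh(n/2)$, whose nearest point to the origin tends to $0$; hence $0\in\overline{A}\setminus\HH$, and (a) and (b) both fail. The paper rules this out by the additional cap $c_n\le\tfrac12\varrho_\HH(T^n(\omega),\omega)$ (resp.\ $c_n\le n/2$ when $\Omega=\C$), which forces the $n$-th closed disc to leave every compact subset of $\C$ as $n\to\infty$; this is exactly what is needed to prove $\overline{A}=\bigcup_n\overline{\DD(T^n(\omega),c_n)}$ (the paper's \eqref{eq:clA}, \eqref{eq:clA1}), after which (a) and (b) do follow as you say. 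Your proposal is missing both this constraint and the limit-point argument it supports.

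The ``moreover'' part is also not correct as written. With constant radii $c_n=1$ the set $A$ is a horizontal tube of bounded width, so (c) fails: for a compact set $K$ containing a disc of radius $10$, $T^n(K)=K+n$ never fits inside $A$. Property (b) fails too, since $T$ maps the point $T^n(\omega)+i\in\overline{\D(T^n(\omega),1)}\subset\overline{A}$ to $T^{n+1}(\omega)+i$, which lies in no disc of $A$ when the radii do not increase. The whole point of the paper's construction here is that one can keep $c_n$ strictly increasing to $\infty$ (so that (b) and (c) survive exactly as in the main case) while still forcing the hyperbolic radii to satisfy $b_n\to0$, because in the Euclidean case $d_n\to\infty$: the paper takes $b_n=1/\sqrt{d_n}$ and $c_n=\tfrac12\min\bigl(\inf_{k\ge n}\tfrac{e^{b_k}-1}{e^{b_k}+1}d_k,\,n\bigr)$, which tends to infinity. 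Your own Euclidean estimate (hyperbolic radius $\lesssim c_n/R_n$) already points to the fix --- choose $c_n\to\infty$ with $c_n/R_n\to0$ and again $c_n\le n/2$ --- but the constant-radius choice you propose does not satisfy the proposition.
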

\begin{proof}
The proof splits in two cases, according to
$\Omega=\HH$ or $\Omega= \mathbb C$ in Cowen's Theorem~\ref{thm:cowen}.

\subsection*{Case 1. $\boldsymbol{\Omega= \HH}$} Then $T(\omega) = a \omega,\ a>1$ or $T(\omega) = \omega \pm i$. Notice that in this case $T$ is an isometry with respect to the hyperbolic metric in $\HH$. Take $\omega \in \HH$ and a sequence  $\{b_n\}_{n \geq 0}$ of positive numbers with $b_n \to \infty$ as $n\to \infty$. 

To define the domain $A$, first we claim that there is $m \in \N$ and a sequence of positive numbers $\{d_n\}_{n \geq 0}$ with $d_n \to \infty$ as $n\to \infty$, such that 
\begin{equation}\label{eq:omega}
\overline{\DD_\HH(T^n(\omega), d_n)} \subset \varphi(V) \qquad \text{for every }  n \ge m.
\end{equation}
To see the claim, suppose it is not true. Then there exists $d > 0$ such that $\overline{\DD_\HH(T^n(\omega), d)} \not\subset \varphi(V)$ for infinitely many $n$, 
which contradicts the assertion~(c) of Cowen's Theorem for the compact set $K = \overline{\DD_\HH(\omega, d)}$. Hence, we can take a sequence $\{d_n\}_{n \geq 0}$ satisfying \eqref{eq:omega}.

Now we define the absorbing set $A$ as
\[
A = \bigcup_{n=m}^\infty \DD_\HH(T^n(\omega), c_n),
\]
where
\[
c_n = \frac{1}{2} \min \left( \inf_{k\geq n}\ln \frac{1 + B_k D_k}{1 -
B_k D_k},\;  \varrho_\HH(T^n(\omega),\omega)\right)
\]
for
\[
B_n = \frac{e^{b_n} - 1}{e^{b_n} + 1}, \qquad D_n = \frac{e^{d_n} - 1}{e^{d_n} 
+ 1}.
\] 
Since, by definition, $b_n, d_n > 0$ and $b_n \to \infty$, $d_n \to \infty$ as $n\to \infty$, it follows that $0 < B_n < 1$, $0 < D_n < 1$ and $B_n \to 1$, $D_n \to 1$ as $n\to \infty$. Hence, the definition of $c_n$ implies (notice that $\varrho_\HH(T^n(\omega),\omega)\nearrow \infty$ as $n\to \infty$) that $\{c_n\}_{n \geq 0}$ is positive, increasing, tends to infinity and satisfies 
\[
c_n < \ln \frac{1 + B_n D_n}{1 - B_n D_n}.
\]
To ensure that $A$ is a domain we enlarge $m$ if necessary, so that 
$c_n > \varrho_\HH(\omega, T(\omega)) = \varrho_\HH (T^{n+1}(\omega), T^n(\omega))$ for all $n \geq m$. Hyperbolic discs in $\HH$ are Euclidean discs, so they are convex. Consequently, $A$ is simply connected, because it is a union of convex sets, all of them intersecting the straight line containing the trajectory of $T^n(\omega)$ under $T$. Notice also that defining
\[
C_n = \frac{e^{c_n} - 1}{e^{c_n} + 1},
\]
we have $C_n > 0$ and $c_n = \ln((1+C_n)/(1-C_n))$, so 
\begin{equation}\label{eq:C_n}
C_n < B_n D_n <D_n \quad \text{and} \quad c_n < d_n.
\end{equation}

The main ingredient to end the proof of the proposition is to show that 
the closure of $A$ equals the union of the closures of the respective discs, i.e.
\begin{equation}\label{eq:clA}
\overline{A} = \bigcup_{n=m}^\infty \overline{\DD_\HH(T^n(\omega), c_n)}.
\end{equation}

Before proving \eqref{eq:clA} we show how it implies the particular statements of the proposition. To prove the
statement~(b), it is enough to use \eqref{eq:clA} and notice that
\[
T(\overline{\DD_\HH(T^n(\omega), c_n)}) = \overline{\DD_\HH(T^{n+1}(\omega),
c_n)}
\subset \DD_\HH(T^{n+1}(\omega), c_{n+1}),
\]
because $c_{n+1} > c_n$. To show  the assertion~(c), take a compact set
$K \subset \HH$. Then $K \subset \DD_\HH(\omega, r)$ for some $r > 0$, so
\[
T^n(K) \subset T^n(\DD_\HH(\omega, r)) = \DD_\HH(T^n(\omega), r) \subset
\DD_\HH(T^n(\omega), c_n) \subset A 
\]
for sufficiently large $n$, because $c_n \to \infty$. 

Now we prove the statement~(a) of the proposition. By \eqref{eq:clA}, it suffices to show that
\begin{equation}\label{eq:d}
\overline{\DD_\HH(T^n(\omega), c_n)} \subset \DD_{\varphi(V)}(T^n(\omega),
b_n).
\end{equation}
Note that by \eqref{eq:omega} and Schwarz--Pick's Lemma~\ref{lemma:schwarz_pick} for the inclusion map, we have
\[
\overline{\DD_{\DD_\HH(T^n(\omega), d_n)}(T^n(\omega),
b_n)} \subset \DD_{\varphi(V)}(T^n(\omega), b_n), 
\]
and so, to show \eqref{eq:d} it is enough to prove
\begin{equation}\label{eq:d1}
\DD_\HH(T^n(\omega), c_n) \subset \DD_{\DD_\HH(T^n(\omega),
d_n)}(T^n(\omega), b_n).
\end{equation}
To show \eqref{eq:d1}, let $h_1$ be a M\"obius transformation of $\clC$ mapping $\HH$ onto $\D$ with $h_1(T^n(\omega)) = 0$. Then
\begin{align*}
& h_1(\DD_\HH(T^n(\omega), c_n)) = \DD_\D(0, c_n),\\
& h_1(\DD_{\DD_\HH(T^n(\omega), d_n)}(T^n(\omega), b_n)) =
\DD_{\DD_\D(0, d_n)}(0, b_n) = \DD_{\D(0, D_n)}(0, b_n),
\end{align*}
where the later equality follows from the definition of $D_n$ and the formula for the hyperbolic metric in $\D$. Hence, to prove
\eqref{eq:d1}, it suffices to check that
\begin{equation}\label{eq:d2}
\DD_\D(0, c_n) \subset \DD_{\D(0, D_n)}(0, b_n).
\end{equation}
Let $h_2(v) = v / D_n$ be the M\"{o}bius transformation which maps univalently 
$\D(0, D_n)$ onto $\D$. Similarly as before, we have
\begin{align*}
& h_2(\DD_\D(0, c_n)) = h_2(\D(0,C_n)) = \D\left(0,\frac{C_n}{D_n}\right),\\
&h_2(\DD_{\D(0, D_n)}(0, b_n)) = \DD_\D(0, b_n) = \D(0, B_n).
\end{align*}
Therefore, to prove \eqref{eq:d2} (and consequently \eqref{eq:d} and the statement~(a)), it is enough to show
\[
\D\left(0,\frac{C_n}{D_n}\right)\subset \D(0, B_n),
\]
which holds by \eqref{eq:C_n}. 

To end the proof of the proposition, it remains to prove \eqref{eq:clA}. Obviously, it suffices to show the inclusion $\overline{A} \subset \bigcup_{n=m}^\infty \overline{\DD_\HH(T^n(\omega), c_n)}$. Take $v \in \overline{A}$ and a sequence $v_k \in A$
such that $v_k \to v$ as $k \to \infty$. By the definition of $A$, there exists
a sequence $n_k \ge m$, such that
\[
v_k \in \DD_\HH(T^{n_k}(\omega), c_{n_k}).
\]
Since, by definition, $c_{n_k} \leq \varrho_\HH(T^{n_k}(\omega),\omega)/2$, we have
\[
\frac{\varrho_\HH(T^{n_k}(\omega),\omega)}{2} \geq c_{n_k} >
\varrho_\HH(T^{n_k}(\omega), v_k) \geq \varrho_\HH(T^{n_k}(\omega), \omega) - 
\varrho_\HH(v_k, \omega),
\]
so
\[
\varrho_\HH(v_k, \omega) > \frac{\varrho_\HH(T^{n_k}(\omega),\omega)}{2}.
\]
On the other hand, the sequence $\varrho_\HH(v_k, \omega)$ is bounded, because
$v_k \to v$.
Hence, the sequence $\varrho_\HH(T^{n_k}(\omega),\omega)$ must be bounded, so
$n_k$
is bounded. Therefore, taking a subsequence, we can assume that there exists $n
\geq m$ such that $n_k = n$ for every $k$, so
\[
v_k \in \DD_\HH(T^n(\omega), c_n).
\]
This implies 
\[
v \in \overline{\DD_\HH(T^n(\omega), c_n)},
\]
which finishes the proof of \eqref{eq:clA}. 

\subsection*{Case 2: $\boldsymbol{\Omega = \C}$}

In this case $T(\omega) = \omega +1$, so $T$ is an isometry with respect to the Euclidean metric in $\C$.
Since most of the arguments here are similar to the previous case (with the Euclidean metric instead of the hyperbolic one), we skip some details.

Similarly as before, we claim that the absorbing region $\varphi(V)$ must contain a union of appropriate discs of increasing radii. 
More precisely, for a given $\omega \in \C$ there exists $m \in\N$
and a sequence $\{d_n\}_{n \geq 0}$ of positive numbers with $d_n\to\infty$ as $n\to \infty$ such that 
\begin{equation}\label{eq:omega1}
\overline{\D(T^n(\omega), d_n)} \subset \varphi(V) \qquad \text{for every } n \ge
m.
\end{equation}
(If the claim was not true, then for the compact set $K =
\overline{\D(\omega, d)}$ we would have a contradiction with the assertion~(c) of Cowen's Theorem.) Hence, in what follows we will assume that the sequence $\{d_n\}_{n \geq 0}$ satisfies \eqref{eq:omega1}.

Take $b >0$ and let $b_n = 1 / \sqrt{d_n} \to 0$. Enlarging $m$ if necessary, we may assume $b_n < b$ for all $n\geq m$. We define the absorbing set $A$ as
\[
A = \bigcup_{n=m}^\infty \D(T^n(\omega), c_n)
\]
for
\[
c_n = \frac 1 2 \min \left(\inf_{k\geq n}\frac{e^{b_k} - 1}{e^{b_k} + 1}
d_k, n\right).
\]
Clearly, $\{c_n\}_{n \geq 0}$ is an increasing sequence of positive numbers. Moreover, we have 
\begin{equation}\label{eq:c_n}
c_n < \frac{e^{b_n} - 1}{e^{b_n} + 1} d_n < d_n \quad \text{and} \quad \frac{e^{b_n} - 1}{e^{b_n} + 1} d_n = \frac{e^{1/\sqrt{d_n}} -
1}{e^{1/\sqrt{d_n}} + 1} d_n \to \infty
\end{equation}
as $n\to\infty$. Hence, $c_n \to \infty$.
 
As in the previous case, enlarging $m$ if necessary, we can assume $A$ is a domain. Moreover, $A$ is simply connected, since it is a union of Euclidean discs intersecting the straight line
containing the $T$-trajectory of $\omega$. 

The main ingredient of the proof is to prove
\begin{equation}\label{eq:clA1}
\overline{A} = \bigcup_{n=m}^\infty \overline{\D(T^n(\omega), c_n)}.
\end{equation}
As in Case~1, first we show how \eqref{eq:clA1} implies the particular statements of the proposition. 
To show the statement~(b), we use \eqref{eq:clA1} and notice that
\[
T(\overline{\D(T^n(\omega), c_n)}) = \overline{\D(T^{n+1}(\omega), c_n)}
\subset \D(T^{n+1}(\omega), c_{n+1}),
\]
because $c_{n+1} > c_n$. To prove the assertion~(c), take a compact set
$K \subset \C$. Then $K \subset \D(\omega, r)$ for some $r > 0$, so
\[
T^n(K) \subset T^n(\D(\omega, r)) = \D(T^n(\omega), r) \subset
\D(T^n(\omega), c_n) \subset A 
\]
for sufficiently large $n$, because $c_n \to \infty$.

To prove the statement~(a), in view of \eqref{eq:clA1}, it suffices to show
\begin{equation}\label{eq:d'}
\overline{\D(T^n(\omega), c_n)} \subset \DD_{\varphi(V)}(T^n(\omega),
b_n).
\end{equation}
Note that by \eqref{eq:omega1} and Schwarz--Pick's Lemma \ref{lemma:schwarz_pick} we have
\[
\overline{\DD_{\D(T^n(\omega), d_n)}(T^n(\omega),
b_n)} \subset \DD_{\varphi(V)}(T^n(\omega), b_n),
\]
so, to show \eqref{eq:d'}, it is enough to prove
\begin{equation}\label{eq:d''}
\D(T^n(\omega), c_n) \subset \DD_{\D(T^n(\omega),
d_n)}(T^n(\omega), b_n).
\end{equation}
To see this is true we apply the univalent function  
$h(v) =\left (v- T^n(\omega)\right)/d_n$, which maps $\D(T^n(\omega), d_n)$ onto $\D$. We have
\begin{align*}
& h(\D(T^n(\omega), c_n)) = \D\left(0, \frac{c_n}{d_n}\right),\\ 
& h(\DD_{\D(T^n(\omega),
d_n)}(T^n(\omega), b_n)) = \DD_\D(0, b_n) = \D\left(0,
\frac{e^{b_n} - 1}{e^{b_n} + 1}\right).
\end{align*}
Therefore, to prove \eqref{eq:d''} (and consequently the statement~(a)), it is sufficient to check 
\[
\D\left(0, \frac{c_n}{d_n}\right) \subset \D\left(0,
\frac{e^{b_n} - 1}{e^{b_n} + 1}\right),
\]
which follows from \eqref{eq:c_n}. 

Finally, we prove \eqref{eq:clA1}. As in Case~1, it suffices to show $\overline{A} \subset \bigcup_{n=m}^\infty \overline{\D(T^n(\omega), c_n)}$. Take $v \in \overline{A}$ and a sequence $v_k \in A$ such that $v_k
\to v$ as $k
\to \infty$. Then there exists a sequence $n_k \ge m$, such that
\[
v_k \in \D(T^{n_k}(\omega), c_{n_k}).
\]
Since, by definition, $c_{n_k} \leq n_k/2$, we have
\[
\frac{n_k}{2} \geq c_{n_k} > |T^{n_k}(\omega) - v_k| = |n_k + \omega - v_k| \geq
n_k - |\omega| - |v_k|,
\]
so
\[
|v_k| > \frac{n_k}{2} - |\omega|.
\]
On the other hand, the sequence $v_k$ is bounded, because $v_k \to { v}$.
Hence, the sequence $n_k$ must be bounded, so taking a subsequence, we
can assume that $n_k = n$ for every $k$ and some $n \geq m$, so
\[
v_k \in \D(T^n(\omega), c_n) \text{ for every } k>0 \quad \text{and} \quad v \in \overline{\D(T^n(\omega), c_n)}.
\]
Hence, \eqref{eq:clA1} follows. 
\end{proof}

With Proposition \ref{prop:star} in hand, we are ready to prove Theorem A. 
We construct the absorbing region $W$ by projecting $A$ into the domain $U$.

\begin{proof}[Proof of Theorem~{\rm A}]
Changing coordinates in $\clC$ by a M\"obius transformation, we can assume
$\zeta = \infty$. Then the assertions~(b)--(d) of Theorem~A have the form
\begin{itemize}
\item[\textup{(b)}] $\overline{W} \subset U$,
\item[\textup{(c)}] $F^n(\overline{W}) =
\overline{F^n(W)} \subset F^{n-1}(W)$ for $n \ge 1$,
\item[\textup{(d)}] $\bigcap_{n=1}^\infty F^n(\overline{W})
= \emptyset$. 
\end{itemize}
Note that by Lemma~\ref{lem:dist}, there exist $c > 0$ and a large $r>0$ such that 
\begin{equation}\label{eq:delta}
\varrho_U(u) >\frac{c}{|u|\log|u|} \qquad \text{for }  u \in U, \; |u| \ge r.
\end{equation}
Fix some
$v_0 \in \varphi(V)$ and let 
$z_0 = \pi(\varphi^{-1}(v_0)).$
Since $F^n(z_0) \to \infty$, replacing $v_0$ by $T^j(v_0)$ for sufficiently
large $j$, we can assume
\begin{equation}\label{eq:c_1}
|F^n(z_0)| > r^{\log r} > r \quad \text{for every } n\geq 0.
\end{equation}

Take $z\in U$ and a sequence of positive numbers $\{r_n\}_{n\geq 0}$ with $r_n \to \infty$. Fix a number $n_0 \in \N$ such that 
\begin{equation}\label{eq:n_0}
r_n > 2\varrho_U(z,z_0) \quad \text{for every } n \geq n_0. 
\end{equation}
We define the sequence
\begin{equation}\label{eq:a_n}
a_n =  \frac{1}{2}\min\left(r_n, \frac{c}{2} \inf_{k\geq
n}\log\log|F^k(z_0)|\right).
\end{equation}
Clearly, $a_n\to \infty$ as $n\to \infty$. Let $A \subset \Omega$ be the domain from Proposition~\ref{prop:star} with $\omega = T^{n_0}(v_0)$ and $b_n = a_{n+n_0}$. Finally,  define 
\[
W = \pi(\varphi^{-1}(A)).
\]
By construction, we have the following commutative diagram.
\[
\begin{CD}
A @. \; \subset \;@. \varphi(V) @. \; \subset \;@. \Omega @>T>>
\Omega\\
@VV{\varphi^{-1}}V @. @VV{\varphi^{-1}}V @. @AA\varphi A @AA\varphi A\\
\varphi^{1}(A) @. \;\; \subset \;\;@. V @. \;\; \subset \;\; @. \HH @>G>> \HH\\
@VV\pi V @.@VV\pi V @. @VV\pi V @VV\pi V\\
W @. \;\; \subset \;\;@. \pi(V) @.\; \; \subset \;\; @. U @>F>> U
\end{CD}
\]
In the remaining part of the proof we show that $W$ satisfies the conditions listed in Theorem~A. 

First, we prove the statement~(a).
By Proposition~\ref{prop:star} we know that, for some $m\in\N$, 
\begin{equation}\label{eq:A}
A \subset \bigcup_{n = m}^\infty\DD_{\varphi(V)}(T^n(\omega), b_n) =
\bigcup_{n = m+n_0}^\infty\DD_{\varphi(V)}(T^n(v_0), a_n)\subset
\bigcup_{n =n_0}^\infty\DD_{\varphi(V)}(T^n(v_0), a_n).
\end{equation}
Hence, by Schwarz--Pick's Lemma~\ref{lemma:schwarz_pick} for $\varphi^{-1}$ and the inclusion map, we obtain
\[
\varphi^{-1}(A) \subset \bigcup_{n = n_0}^\infty \DD_V(\varphi^{-1}(T^n(v_0)),
a_n) = \bigcup_{n = n_0}^\infty \DD_V(G^n(\varphi^{-1}(v
_0)), a_n) \subset \bigcup_{n = n_0}^\infty \DD_\HH(G^n(\varphi^{-1}(v_0)), a_n)
\]
and
\[
W \subset \bigcup_{n =n_0}^\infty \DD_U(\pi(G^n(\varphi^{-1}(v_0))), a_n) = \bigcup_{n =
n_0}^\infty \DD_U(F^n(z_0), a_n).
\]
Using this together with \eqref{eq:n_0}, \eqref{eq:a_n} and Schwarz--Pick's
Lemma~\ref{lemma:schwarz_pick}, we get
\begin{multline*}
W \subset \bigcup_{n = n_0}^\infty \DD_U(F^n(z), a_n + \varrho_U(F^n(z),
F^n(z_0))) \\\subset \bigcup_{n = n_0}^\infty \DD_U(F^n(z), a_n +
\varrho_U(z, z_0)) \subset \bigcup_{n = n_0}^\infty \DD_U(F^n(z), r_n),
\end{multline*}
which ends the proof of the statement~(a). 

Now we prove the assertions (b)--(d).
Fix $j \geq 0$ and consider an arbitrary $u \in \overline{F^j(W)}$. Let $\{w_k\}_{k\ge 1}$ be a sequence of points in $W$, such that for $u_k = F^j(w_k)$ we have $u_k \to u$ as $k \to \infty$. Since $W = \pi(\varphi^{-1}(A))$, there exists a sequence of points $v_k \in A$ with $w_k = \pi(\varphi^{-1}(v_k))$. By \eqref{eq:A}, for every $k$ there exists $n_k \geq n_0$, such that
\begin{equation}\label{eq:v_k}
v_k \in \DD_{\varphi(V)}(T^{n_k}(v_0), a_{n_k}).
\end{equation}
Thus, by Schwarz--Pick's Lemma~\ref{lemma:schwarz_pick}, we have
\begin{equation}\label{eq:w_k,u_k}
w_k \in \DD_U(F^{n_k}(z_0), a_{n_k}), \quad 
u_k \in \DD_U(F^{n_k+j}(z_0), a_{n_k}).
\end{equation}

The key ingredient in the proof of the assertions (b)--(d) is to show
\begin{equation}\label{eq:u>}
|u_k| > e^{\sqrt{\log|F^{n_k + j}(z_0)|}}.
\end{equation}
To prove \eqref{eq:u>}, take $\gamma_k: [0,1] \to U$ to be a curve in $U$ such that $\gamma_k(0)=F^{n_k + j}(z_0)$, $\gamma_k(1)=u_k$,
\begin{equation}\label{eq:int}
\int_{\gamma_k} \varrho_U(\xi) |d\xi| < 2\varrho_U(F^{n_k + j}(z_0),u_k)
\end{equation}
and let
\[
t_k = \sup\{t \in [0,1]: |\gamma_k({ t' })| \geq r \text{ for all } 0< t' < t\}.
\]
By \eqref{eq:c_1}, $|\gamma_k(0)| > r$, so $t_k$ is well defined. Moreover, we have $|\gamma_k(t)| \ge r$ for $t \in [0, t_k]$ and $|\gamma_k(t_k)|\in \{r, |\gamma_k(1)|\}$. Notice that if $|\gamma_k(0)| < |\gamma_k(1)|$, then \eqref{eq:u>} follows from \eqref{eq:c_1}. Hence, we may assume $|\gamma_k(0)| \geq |\gamma_k(1)|$, which implies $|\gamma_k(0)| \geq |\gamma_k(t_k)|$. 
Using this together with
\eqref{eq:delta}, \eqref{eq:a_n}, \eqref{eq:w_k,u_k} and \eqref{eq:int}, we obtain
\begin{multline*}
\frac{c}{4} \log\log|F^{n_k + j}(z_0)| \geq a_{n_k} > \varrho_U(
F^{n_k + j}(z_0),u_k) \\>
\frac 1 2 \int_{\gamma_k} \varrho_U(\xi) |d\xi| \geq \frac 1  2
\int_{\gamma_k([0, t_k])} \varrho_U(\xi) |d\xi| \geq  \frac c 2 
\int_{\gamma_k([0,
t_k])} \frac{|d\xi|}{|\xi|\log|\xi|} {\geq} \frac c 2
\int_{|\gamma_k(t_k)|}^{|\gamma_k(0)|} \frac{ds}{s\log s}\\ = \frac c 2
(\log\log|F^{n_k
+ j}(z_0)| - \log\log|\gamma_k(t_k)|),
\end{multline*}
where the later inequality follows from the definition of the Riemann integral.
We conclude that $\log\log|\gamma_k(t_k)| > (\log\log|F^{n_k + j}(z_0)|)/2$, which means
\begin{equation}\label{eq:gamma_k>}
|\gamma_k(t_k)| > e^{\sqrt{\log|F^{n_k + j}(z_0)|}}.
\end{equation}
In particular, this implies that $|\gamma_k(t_k)| \neq r$, because otherwise we have a contradiction with \eqref{eq:c_1}. Hence, $|\gamma_k(t_k)| = |\gamma_k(1)|=|u_k|$, so \eqref{eq:gamma_k>} shows \eqref{eq:u>}.

Having \eqref{eq:u>}, we now prove the assertions (b)--(d) of Theorem~{\rm A}.
First, notice that since $u_k \to u$ as $k \to \infty$ and $F^n(z_0) \to \infty$ as $n\to\infty$, \eqref{eq:u>} implies that the sequence
$n_k$ is bounded. Hence, \eqref{eq:v_k} shows that the sequence $v_k$ is bounded, so taking a subsequence, we can assume that 
\[
v_k \to v \in\overline{A},
\]
and, by Proposition~\ref{prop:star}, $v \in \varphi(V)$. Therefore, by continuity, 
\begin{equation}\label{eq:w_k_to_w}
w_k \to w  = \pi(\varphi^{-1}(v)) \in \overline{W} \cap U \quad \text{and} \quad F^j(w) = u.
\end{equation}
Recall that $u$ was taken as an arbitrary point in $\overline{F^j(W)}$.
Hence, for $j=0$, \eqref{eq:w_k_to_w} implies $u = w \in U$, which proves the statement~(b) and shows that $F^j(\overline{W})$ is well defined for $j\ge 1$. To prove the assertion~(c), notice that \eqref{eq:w_k_to_w} gives $u = F^j(w) \in F^j(\overline{W})$, which shows $\overline{F^j(W)} \subset F^j(\overline{W})$. On the other hand, the inclusion $F^j(\overline{W}) \subset
\overline{F^j(W)}$ is obvious by the continuity of $F^j$, so $F^j(\overline{W}) = \overline{F^j(W)}$ for $j\ge 1$. To end the proof of the assertion~(c), it is sufficient to show $F^j(\overline{W}) \subset F^{j-1}(W)$ for $j \geq 1$. To do it, notice that Proposition~\ref{prop:star} implies $T(v) \in T(\overline{A}) \subset A$, so for $j=1$ \eqref{eq:w_k_to_w} gives  $u = F(w) = F(\pi(\varphi^{-1}(v))) =  \pi(\varphi^{-1}(T(v))) \in W$. Hence,
\[
F(\overline{W}) = \overline{F(W)}\subset W.
\]
This and induction on $j$ proves $F^j(\overline{W}) \subset F^{j-1}(W)$ for $j \geq 1$, which ends the proof of the assertion~(c). 

To show the statement~(d), notice that \eqref{eq:u>} implies $|u| \geq \inf_{n\ge
j+n_0}e^{\sqrt{\log|F^n(z_0)|}}$, so
\[
F^j(\overline{W}) =  \overline{F^j(W)}\subset \C \setminus \D\left(0, \inf_{n\ge
j+n_0}e^{\sqrt{\log|F^n(z_0)|}}\right).
\]
This proves (d), because $F^n(z_0) \to \infty$ as $n \to \infty$.

Now we show the statement~(e).
Take a compact set $K \subset U$ and a point $u \in K$. Let $w \in \HH$ be such
that $\pi(w) = u$ and take $N(w)$ to be an open neighbourhood of $w$, such that
$\overline{N(w)} \subset \HH$. Then $\pi (N(w))$ is an open neighbourhood of
$u$, so by the compactness of $K$, we can choose a finite number of points $u_1,
\ldots, u_k \in K$, such that $K \subset \bigcup_{j=1}^k \pi (N(w_j))$. Since
$L = \bigcup_{j=1}^k \varphi (\overline{N(w_j)})$ is a compact set in $\Omega$,
by Proposition~\ref{prop:star}, there exists $n$ such that $T^n(L) \subset A$.
This implies 
\[
\bigcup_{j=1}^k
G^n(N(w_j)) \subset 
\varphi^{-1}\left(\bigcup_{j=1}^k T^n (\varphi(N(w_j)))\right) =
\varphi^{-1}\left(\bigcup_{j=1}^k \varphi(G^n(N(w_j)))\right) \subset 
\varphi^{-1}(A),
\]
so 
\[
F^n(K) \subset \bigcup_{j=1}^k F^n(\pi(N(w_j))) = \bigcup_{j=1}^k
\pi(G^n(N(w_j))) \subset W,
\]
which ends the proof of the statement~(e).

To end the proof of Theorem~{\rm A}, notice that by Proposition~\ref{prop:star}, the domain $A$ is simply connected, $T|_A: A \to A$ is univalent and the map $\pi \circ\varphi^{-1}|_A$ is a holomorphic covering conjugating $T|_A$ to $F|_W: W \to W$. Hence, the lift of $F|_W$ under a universal covering is univalent. By Lemma~\ref{lem:cover}, the induced endomorphism $(F|_W)^*$ of the fundamental group of $W$ is an isomorphism. This ends the proof of Theorem~{\rm A}.
\end{proof}


\section{Configurations of domains and their images} \label{section:configurations}

In this section we present preliminary lemmas which we use repeatedly throughout the proofs of Theorems~B and~C. They provide the existence of weakly repelling fixed points for meromorphic maps in some domains under certain combinatorial conditions related to the configuration of the domain and its subsequent images.  These lemmas are formulated in a general setup and may have further applications apart from the ones used in this paper. 

The first lemma shows that a meromorphic map is proper on 
bounded components of the preimage of a domain with finite Euler
characteristic. 

\begin{lem}[\bf Proper restrictions of meromorphic maps] \label{lem:proper} Let $D\subset
\clC$ be a domain with finite Euler characteristic and 
let $f$ be a map, which is non-constant and meromorphic on a neighbourhood of $\overline{D^{\prime}}$,
where $D'$ is a bounded component of $f^{-1}(D)$. Then $D'$ has finite Euler
characteristic and the restriction $f:D' \to D$ is proper.
\end{lem}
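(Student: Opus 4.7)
The plan is to first establish properness of $f: D' \to D$, and then deduce finiteness of the Euler characteristic from the Riemann--Hurwitz formula. Throughout I would treat $f$ as a continuous map into $\clC$ on a neighbourhood of the compact set $\overline{D'}$, so that poles of $f$ cause no trouble (and the case $\infty \in D$ is handled uniformly).

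The crucial preliminary step is to show that $f(\partial D') \subset \partial D$. If $z \in \partial D'$, then $z$ is a limit of points of $D' \subset f^{-1}(D)$, so by continuity $f(z) \in \overline{D}$. If one had $f(z) \in D$, there would be a connected open neighbourhood $V$ of $z$ with $f(V) \subset D$, hence $V \subset f^{-1}(D)$; since $V$ meets $D'$ and is connected, $V$ would lie entirely in the component $D'$, contradicting $z \notin D'$. Hence $f(z) \in \partial D$, as claimed.

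Given this, properness is immediate: for any compact $K \subset D$ the set $f^{-1}(K) \cap \overline{D'}$ is a closed subset of the compactum $\overline{D'}$, hence compact; and since $f(\partial D')$ is disjoint from $K$, this set actually equals $f^{-1}(K) \cap D'$. The open mapping theorem applied to the non-constant holomorphic map $f$, together with connectedness of $D$ and properness, then forces $f(D') = D$ (the image is open, closed in $D$, and non-empty). Moreover, for each $y \in D$ the fibre $f^{-1}(y) \cap \overline{D'}$ is discrete in a compact set, hence finite, so $f: D' \to D$ has a well-defined finite degree $d$.

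For the Euler characteristic I would invoke the Riemann--Hurwitz formula $\chi(D') = d\,\chi(D) - B$, where $B$ is the total branching number of $f|_{D'}$. The key point is that $B$ is finite: since $f$ is meromorphic on a neighbourhood of the compactum $\overline{D'}$, the derivative $f'$ has only finitely many zeros and $f$ has only finitely many poles there, and these together account for all ramification of $f|_{D'}$. Since $\chi(D)$ is finite by hypothesis, $\chi(D') < \infty$ follows. The main subtlety to watch is precisely this finiteness of degree and of branching in a non-compact setting; both reduce to compactness of $\overline{D'}$ combined with the meromorphic extension hypothesis, so no genuine obstacle is expected.
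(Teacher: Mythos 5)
Your argument is correct, and the comparison splits naturally in two. The properness half is essentially the paper's own reasoning, just spelled out more fully: you prove $f(\partial D')\subset\partial D$ and combine it with compactness of $\overline{D'}$, while the paper condenses this into the assertion that each component of $\partial D'$ is mapped onto a component of $\partial D$ and concludes properness from that. For the finite Euler characteristic, however, you take a genuinely different route. The paper argues elementarily: $\partial D$ has finitely many components, each component of $\partial D'$ maps onto one of them, and if $\partial D'$ had infinitely many components then some value $w_0\in\partial D$ would be attained by $f$ on a set with an accumulation point in the compact set $\overline{D'}$, forcing $f\equiv w_0$, a contradiction; hence $\partial D'$ has finitely many components and $\chi(D')$ is finite. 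You instead deduce finiteness of $\chi(D')$ from Riemann--Hurwitz, $\chi(D')=d\,\chi(D)-B$, after checking that the degree $d$ and the total branching $B$ are finite. This is legitimate, but it should be quoted in the form valid for proper branched coverings over a finite-type base \emph{without} assuming a priori that the cover has finite type: one removes the finitely many branch values and their finitely many preimages and uses multiplicativity of the Euler characteristic under degree-$d$ coverings, so that finiteness of $\chi(D')$ comes out as the conclusion rather than being implicitly presupposed by the formula. The trade-off is clear: the paper's argument needs only the identity theorem and compactness, whereas yours invokes heavier covering-space machinery but yields extra quantitative information (the exact relation $\chi(D')=d\,\chi(D)-B$, hence a bound on the connectivity of $D'$ in terms of the degree and the critical points and multiple poles of $f$ in $\overline{D'}$), which the lemma as stated does not require.
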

\begin{proof} Clearly, we have $f(D') = D$. Since $D$ has finite Euler characteristic,
its boundary has a finite number of connected components, and each 
component of $\bd D^{\prime}$ is mapped by $f$ onto a component of $\bd D$.
Hence, the boundary of $D'$ has finitely many components, because otherwise
we could find $w_0$ in the boundary of $D$, such that $f $ takes the value $w_0$ on
a set with an accumulation point in $\overline{D'}$, so $f \equiv w_0$.
This implies that $D'$ has finite Euler characteristic and $f:D' \to
D$ is proper.
\end{proof}

\begin{defn*}[\bf Exterior of a compact set]
For a compact set $X \subset \C$ we denote by $\ext(X)$
the connected component of $\clC \setminus X$ containing infinity. 
We set $K(X) = \clC \setminus \ext(X)$. For a Jordan curve $\gamma \subset \C$
we denote by $\inter(\gamma)$ the bounded component of $\C \setminus \gamma$.
\end{defn*}

The following facts are immediate consequences of some standard topological
facts and the
maximum principle. We will use them repeatedly without explicit quotation. 

\begin{lem}[{\bf Properties of $\boldsymbol{K(X)}$ and $\boldsymbol{\ext(X)}$}]\label{lem:K} Let $X \subset \C$ 
be a compact set. Then:
\begin{itemize}
\item[$(a)$] if $X$ is connected, then $\ext(X)$ is a simply connected subset of $\clC$ and $K(X)$ is a connected subset of $\C$,
\item[$(b)$] if $X$ has a finite number of components, then $\ext(X)$ has finite Euler
characteristic,
\item[$(c)$]
$K(X)$ is a full compact set in $\C$,
\item[$(d)$]
if $Y \subset X$ is a compact set, then $\ext(Y) \supset
\ext(X)$ and $K(Y) \subset K(X)$,
\item[$(e)$]
if $f$ is meromorphic map in a neighbourhood of $K(X)$ and $K(X)$ does not
contain poles of $f$, then $f(K(X)) \subset K(f(X))$. 
\end{itemize}
\end{lem}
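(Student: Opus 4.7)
The plan is to dispatch parts (a)--(d) using classical planar topology and to prove (e) via the open mapping theorem together with the inclusion $\partial f(K)\subset f(\partial K)$ for $f$ holomorphic on a neighbourhood of a compact set $K$.

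For (a), I would invoke the classical fact that every connected component of the complement in $\clC$ of a connected compact planar set is simply connected; in particular $\ext(X)$ is simply connected as a subset of $\clC$, and consequently its complement $K(X)\subset\C$ is connected (the complement in $\clC$ of a non-empty simply connected open set is connected). For (b), writing $X$ as the disjoint union of its $k$ connected components and observing that each bounded component of $\C\setminus X$ is swallowed into exactly one connected component of $K(X)$, I would deduce that $K(X)$ has at most $k$ connected components, so $\chi(\ext(X))=2-(\text{number of components of }K(X))$ is finite. Part (c) follows immediately from the definitions: $\ext(X)$ is open and contains a punctured neighbourhood of $\infty$, so $K(X)=\clC\setminus\ext(X)$ is closed and bounded in $\C$, hence compact, with connected complement $\ext(X)$ in $\clC$. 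Part (d) is also immediate: $Y\subset X$ gives $\clC\setminus X\subset\clC\setminus Y$, so the connected set $\ext(X)$ lies in the single component of $\clC\setminus Y$ containing $\infty$, namely $\ext(Y)$; complementation yields $K(Y)\subset K(X)$.

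For (e), the plan is first to shrink the neighbourhood on which $f$ is meromorphic so that $f$ becomes holomorphic on an open neighbourhood $N\supset K(X)$; this uses compactness of $K(X)$ and absence of poles on it. Set $L:=f(K(X))$, a compact subset of $\C$. The crucial intermediate step is $\partial L\subset f(\partial K(X))$: for $w=f(z_0)\in\partial L$ with $z_0\in K(X)$, either $z_0\in\partial K(X)$ (done), or $z_0$ lies in some component $C$ of $\inter(K(X))$, in which case the open mapping theorem forces $f|_C$ to be constant (else $w$ would be interior to $L$), and by continuity $f\equiv w$ on $\overline{C}$, whose boundary lies in $\partial K(X)$. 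Since $\partial K(X)\subset X$, this gives $\partial L\subset f(X)$, so each connected component of $\clC\setminus f(X)$ is entirely contained in $L$ or entirely disjoint from it; the unbounded component $\ext(f(X))$ cannot sit inside the compact set $L$, so $\ext(f(X))\cap L=\emptyset$, which rearranges to $f(K(X))\subset K(f(X))$.

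The main obstacle is the open-mapping step in (e): one must correctly handle the possibility that $f$ is locally constant at $z_0\in\inter(K(X))$, which is done via the identification $\partial C\subset\partial K(X)$ for components $C$ of the open set $\inter(K(X))$ (a standard consequence of $K(X)$ being closed). Apart from this, every item is routine planar topology, and the connectedness hypothesis in (a) is exactly what makes the simply-connected-complement theorem on the sphere applicable.
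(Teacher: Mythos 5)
Your proposal is correct; note that the paper never proves Lemma~\ref{lem:K} at all, simply declaring its items ``immediate consequences of some standard topological facts and the maximum principle,'' and your write-up supplies exactly those details in the intended spirit, with the open mapping theorem playing the role of the maximum principle in part (e) via the chain $\partial f(K(X)) \subset f(\partial K(X)) \subset f(X)$ together with the connectedness of $\ext(f(X))$. The only place worth tightening is (b): to get at most $k$ components of $K(X)$ you should say explicitly that the closure of each bounded complementary component of $\C\setminus X$ has boundary in $X$, so every component of $K(X)$ contains at least one component of $X$ --- which is clearly what your ``swallowed into'' is gesturing at.
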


The next lemma shows that the multiple connectivity of a Fatou component $U$
implies the existence of a pole of $f$ in a bounded component of the complement of some image of $U$. This will be an important
property used in the proofs of the main theorems.

\begin{lem}[\bf Poles in loops] \label{poles-in-holes} 
Let $f:\C \to \clC$ be
a transcendental non-entire meromorphic map and let $\gamma \subset \C$ be a
closed curve in a Fatou component $U$ of $f$, such that $K(\gamma) \cap J(f) \ne
\emptyset$. Then there exists $n \geq 0$, such that $K(f^n(\gamma))$ contains
a pole of $f$. Consequently, if $U$ is multiply connected then there exists a bounded component of $\clC \setminus f^n(U)$, which contains a pole.
\end{lem}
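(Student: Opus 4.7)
The plan is to argue by contradiction: assume that $K(f^n(\gamma))$ contains no pole of $f$ for every $n\ge 0$, and derive a contradiction with $K(\gamma)\cap J(f)\ne\emptyset$. The first step is to show, by induction on $n$, that under this assumption $f^n$ is holomorphic on a neighbourhood of $K(\gamma)$ and $f^n(K(\gamma))\subset K(f^n(\gamma))$. Indeed, if this holds for $n-1$, then the compact set $K(f^{n-1}(\gamma))$ is disjoint from the isolated pole set of $f$, so $f$ is holomorphic on a neighbourhood of it; hence $f^n=f\circ f^{n-1}$ is holomorphic on a neighbourhood of $K(\gamma)$, and the inclusion follows from Lemma~\ref{lem:K}(e) applied to $X=f^{n-1}(\gamma)$.

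The crucial consequence is that \emph{no point of $K(\gamma)$ is a prepole of $f$}: if $z\in K(\gamma)$ satisfied $f^k(z)=\infty$ with $k\ge 1$ minimal, then $f^{k-1}(z)$ would be a pole of $f$ lying in $f^{k-1}(K(\gamma))\subset K(f^{k-1}(\gamma))$, contradicting the standing assumption. On the other hand, since $\gamma\subset U\subset F(f)$ we have $\gamma\cap J(f)=\emptyset$, so the hypothesised point of $K(\gamma)\cap J(f)$ actually lies in the open set $K(\gamma)\setminus\gamma$ (the union of bounded components of $\C\setminus\gamma$). Because $f$ is transcendental, non-entire and meromorphic, the prepoles of $f$ are dense in $J(f)$ (as recalled in the introduction), so this open set must contain a prepole, contradicting the previous sentence and proving the first statement.

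For the ``Consequently'' part, assume $U$ is multiply connected and pick some bounded component $H$ of $\clC\setminus U$. Take a closed curve $\gamma\subset U$ of winding number one around points of $H$; then points of $H$ are not in $\ext(\gamma)$, so $H\subset K(\gamma)\setminus\gamma$, and $\partial H\subset\overline{H}\subset K(\gamma)$, while $\partial H\subset\partial U\subset J(f)$, so $K(\gamma)\cap J(f)\supset\partial H\ne\emptyset$. The first part yields some $n$ and a pole $p\in K(f^n(\gamma))$. Since $p\in J(f)$ and $f^n(U)\subset F(f)$, one has $p\notin f^n(U)$; and since $p\in K(f^n(\gamma))=\clC\setminus\ext(f^n(\gamma))$, the point $p$ is separated from $\infty$ by the curve $f^n(\gamma)\subset f^n(U)$, so $p$ lies in a bounded component of $\clC\setminus f^n(U)$.

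The most delicate point is the exceptional case (flagged in the introduction) in which $f$ has a unique omitted pole, so prepoles fail to be dense in $J(f)$. In that regime the density step above must be replaced by a direct normality argument: on the open set $K(\gamma)\setminus\gamma$ the family $\{f^n\}$ would automatically omit both $\infty$ and the unique omitted pole of $f$, and one must produce a third dynamically forced omitted value (or invoke the density of repelling periodic points in $J(f)$) so that Montel's theorem renders $\{f^n\}$ normal on $K(\gamma)\setminus\gamma$, which contradicts the presence there of a point of $J(f)$.
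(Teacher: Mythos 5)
Your argument for the main case is correct and is essentially the paper's own proof: assuming that no $K(f^n(\gamma))$ contains a pole, each $f^n$ is holomorphic on a neighbourhood of $K(\gamma)$, so $f^n(K(\gamma))\subset K(f^n(\gamma))$ by Lemma~\ref{lem:K}(e); hence $K(\gamma)$ contains no prepole, while the Julia point of $K(\gamma)$ lies in the open set $K(\gamma)\setminus\gamma$ and prepoles are dense in $J(f)$ --- a contradiction. Your deduction of the ``Consequently'' part is also fine.

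The genuine gap is precisely the exceptional case you flag at the end: $f$ with exactly one pole which is an omitted value, where prepoles are \emph{not} dense in $J(f)$. There you do not give an argument, only a statement of what ``must'' be done, and neither of the two routes you suggest is viable as stated. A third omitted value need not exist: for $f(z)=e^z/z$ the origin is the unique (omitted) pole, and $e^z=cz$ is solvable for every $c\neq 0$ (an entire function of order one without zeros would be of the form $e^{az+b}$), so the family $\{f^n\}$ on $K(\gamma)\setminus\gamma$ omits only $\infty$ and the pole, which is not enough for Montel. Density of repelling periodic points does not help either: a repelling periodic orbit stays in $\bigcup_n K(f^n(\gamma))$, so it produces no contradiction with the standing assumption, and it certainly does not supply an omitted value; if anything, a repelling periodic point inside $K(\gamma)\setminus\gamma$ obstructs the normality you are trying to force rather than yielding it. The paper closes this case differently: such an $f$ is an analytic self-map of the punctured plane $\C\setminus\{p\}$, and the claim is obtained from the structure theory of those maps (Theorem~1 of \cite{bakdom}). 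Some input of that kind is needed here; without it your proof covers only the non-exceptional case.
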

\begin{proof} If $f$ has exactly one pole which is an omitted value, then $f$
is a self-map of a punctured plane and the claim follows easily from \cite[Theorem 1]{bakdom}. Hence,
we can assume that $f$ has at least two poles or exactly one pole, which is
not an omitted value. Then prepoles are dense in $J(f)$, so there is a prepole
in $K(\gamma)$. Suppose $K(f^n(\gamma))$ does not contain poles of $f$ for
every $n \ge 0$. Then $f^n$ is holomorphic
in a neighbourhood of $K(\gamma)$, so by Lemma~\ref{lem:K},
$f^n(K(\gamma)) \subset
K(f^n(\gamma))$ for every $n \geq 0$. Hence, $K(\gamma)$ cannot contain any
prepoles of $f$, which gives a contradiction. 
\end{proof}

The next lemma is a consequence of Buff's results on the existence of
weakly repelling fixed points for rational-like maps (Theorem~\ref{rat-like} and Corollary~\ref{cor:boundarycontact}).
\begin{lem}[\bf Boundary maps out] \label{mapout}
Let $\Omega \subset \C$ be a bounded domain with finite Euler
characteristic and let $f$ be a meromorphic map 
in a neighbourhood of $\overline{\Omega}$. Assume that there exists a component
$D$ of \ $\clC \setminus
f(\bd\Omega)$, such that:
\begin{itemize}
\item[$(a)$] $\overline{\Omega} \subset D$,
\item[$(b)$] there exists $z_0 \in \Omega$ such that $f(z_0) \in D$. 
\end{itemize}
Then $f$ has a weakly repelling fixed point in $\Omega$. Moreover, if
additionally $\Omega$ is simply connected with locally
connected boundary, then the assumption~$(a)$ can be replaced by: 
\begin{itemize}
\item[$(a')$] $\Omega \subsetneq D$ and $f$ has no fixed points in
$\bd \Omega \cap f(\bd\Omega)$.
\end{itemize}
\end{lem}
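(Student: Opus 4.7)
\textbf{Proof plan for Lemma~\ref{mapout}.}
The plan is to find in each case a subdomain $\Omega' \subset \Omega$ of finite Euler characteristic containing $z_0$ on which $f$ restricts to a proper map onto a target domain of finite Euler characteristic, and then apply Theorem~\ref{rat-like} in Case~$(a)$ and Corollary~\ref{cor:boundarycontact} in Case~$(a')$. The natural candidate is $\Omega'$, the connected component of $f^{-1}(\tilde D)\cap \Omega$ containing $z_0$, for a suitably chosen $\tilde D\subset D$. The essential observation, used throughout, is that $\partial D\subset f(\partial\Omega)$ and $D\cap f(\partial\Omega)=\emptyset$ by definition of $D$; hence $\partial\Omega\cap f^{-1}(\tilde D)=\emptyset$ whenever $\tilde D\subset D$, which prevents any such component from reaching $\partial\Omega$ ``from inside''.

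For Case~$(a)$, since $D$ itself need not have finite Euler characteristic, I would first construct an auxiliary domain $D_0 \subset D$ with finite Euler characteristic, $\overline\Omega\cup\{f(z_0)\}\subset D_0$ and $\overline{D_0}\subset D$; this exists by a standard topological construction (cover the compact set $\overline\Omega\cup\{f(z_0)\}$ by finitely many small spherical discs compactly contained in the connected open set $D$, and join their components by tubular neighbourhoods of arcs lying in $D$). After a M\"obius change of coordinates sending a point of $f(\partial\Omega)$ to infinity, I may assume $D_0\subset \C$. Now let $\Omega'$ be the component of $f^{-1}(D_0)\cap\Omega$ containing $z_0$. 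The crucial estimate is $\overline{\Omega'}\subset\Omega$: any limit point $z\in\overline{\Omega'}\cap\partial\Omega$ would satisfy simultaneously $f(z)\in\overline{D_0}\subset D$ and $f(z)\in f(\partial\Omega)\subset\clC\setminus D$. The same argument shows that $\Omega'$ is a bounded component of $f^{-1}(D_0)$ in a neighbourhood of $\overline\Omega$ (any larger connected extension would have to cross $\partial\Omega\cap f^{-1}(D_0)=\emptyset$), so Lemma~\ref{lem:proper} gives that $\Omega'$ has finite Euler characteristic and $f\colon\Omega'\to D_0$ is proper. Since $\overline{\Omega'}\subset\overline\Omega\subset D_0$, Theorem~\ref{rat-like} yields the desired weakly repelling fixed point of $f$ in $\Omega'\subset\Omega$.

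For Case~$(a')$, the simply-connectedness of $\Omega$ together with local connectedness of $\partial\Omega$ implies that $f(\partial\Omega)$ is a locally connected continuum; consequently every component of $\clC\setminus f(\partial\Omega)$ is simply connected and, by Torhorst's Theorem~\ref{theorem:torhorst}, has locally connected boundary. In particular $D$ is simply connected, hence of finite Euler characteristic, and I can work directly with $\tilde D=D$: let $\Omega'$ be the component of $f^{-1}(D)\cap\Omega$ containing $z_0$. As in the component argument above, $\Omega'$ is a bounded component of $f^{-1}(D)$ in a neighbourhood of $\overline\Omega$, so Lemma~\ref{lem:proper} provides that $\Omega'$ has finite Euler characteristic and $f\colon\Omega'\to D$ is proper. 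To apply Corollary~\ref{cor:boundarycontact} in the case $\deg f\ge 2$, I must exclude fixed points of $f$ on $\partial D\cap\partial\Omega'$; this is precisely where hypothesis $(a')$ enters, through the simple but crucial observation that $\Omega\subset D$ forces $\partial D\cap\Omega=\emptyset$, which combined with $\partial\Omega'\subset\Omega\cup\partial\Omega$ gives
\[
\partial D\cap\partial\Omega' \;\subset\; \partial D\cap\partial\Omega \;\subset\; f(\partial\Omega)\cap\partial\Omega,
\]
a set which is fixed-point-free by hypothesis. In the remaining degree-one case, the chain $\Omega'\subset\Omega\subsetneq D$ guarantees $\Omega'\ne D$, so Corollary~\ref{cor:boundarycontact} applies and produces a weakly repelling fixed point in $\Omega'\subset\Omega$.

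The hardest step is the boundary analysis in Case~$(a')$: one must recognise that the a~priori rather different sets $\partial\Omega\cap f(\partial\Omega)$ (appearing in the hypothesis) and $\partial D\cap\partial\Omega'$ (appearing in the statement of Corollary~\ref{cor:boundarycontact}) are bridged by the elementary fact $\Omega\subset D\Rightarrow \partial D\cap\Omega=\emptyset$, and that this inclusion is exactly what is needed to relocate every candidate boundary fixed point back to $\partial\Omega$. A secondary technicality is the construction of the auxiliary finite-type domain $D_0$ in Case~$(a)$, necessary because $D$, being a component of the complement of the possibly wild continuum $f(\partial\Omega)$, may itself fail to have finite Euler characteristic.
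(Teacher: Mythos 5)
Your proposal is correct and follows essentially the same route as the paper: restrict $f$ to the preimage component containing $z_0$, use $f(\bd\Omega)\cap D=\emptyset$ to confine that component compactly inside $\Omega$, and invoke Lemma~\ref{lem:proper} together with Theorem~\ref{rat-like} in case~(a), respectively Corollary~\ref{cor:boundarycontact} in case~(a') via Torhorst's Theorem~\ref{theorem:torhorst} and the relocation of possible boundary fixed points into $\bd\Omega\cap f(\bd\Omega)$. The only deviation, the auxiliary domain $D_0$ in case~(a), is harmless but unnecessary: since $\Omega$ has finite Euler characteristic, $f(\bd\Omega)$ has finitely many components and every complementary component of $D$ contains one of them, so $D$ itself already has finite Euler characteristic, which is exactly how the paper argues.
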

 
\begin{center}
\begin{figure}[hbt!]
\def\svgwidth{0.25\textwidth}
\begingroup%
  \makeatletter%
  \providecommand\color[2][]{%
    \errmessage{(Inkscape) Color is used for the text in Inkscape, but the package 'color.sty' is not loaded}%
    \renewcommand\color[2][]{}%
  }%
  \providecommand\transparent[1]{%
    \errmessage{(Inkscape) Transparency is used (non-zero) for the text in Inkscape, but the package 'transparent.sty' is not loaded}%
    \renewcommand\transparent[1]{}%
  }%
  \providecommand\rotatebox[2]{#2}%
  \ifx\svgwidth\undefined%
    \setlength{\unitlength}{189.20207715bp}%
    \ifx\svgscale\undefined%
      \relax%
    \else%
      \setlength{\unitlength}{\unitlength * \real{\svgscale}}%
    \fi%
  \else%
    \setlength{\unitlength}{\svgwidth}%
  \fi%
  \global\let\svgwidth\undefined%
  \global\let\svgscale\undefined%
  \makeatother%
  \begin{picture}(1,1.05598474)%
    \put(0,0){\includegraphics[width=\unitlength]{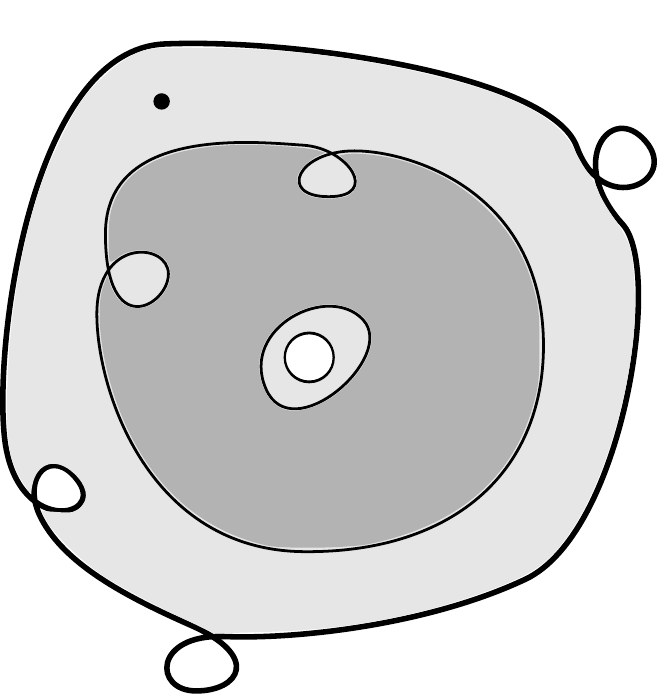}}%
    \put(0.71228117,0.76817204){\color[rgb]{0,0,0}\makebox(0,0)[lb]{\smash{$D$}}}%
    \put(0.27335038,0.88756295){\color[rgb]{0,0,0}\makebox(0,0)[lb]{\smash{\tiny $f(z_0)$}}}%
    \put(0.59349658,0.98489286){\color[rgb]{0,0,0}\makebox(0,0)[lb]{\smash{\small $f(\partial \Omega)$}}}%
    \put(0.3176375,0.66157669){\color[rgb]{0,0,0}\makebox(0,0)[lb]{\smash{\ss $z_0$}}}%
    \put(0.53499959,0.39370273){\color[rgb]{0,0,0}\makebox(0,0)[lb]{\smash{$\Omega$}}}%
  \end{picture}%
\endgroup%
\hfil
\def\svgwidth{0.25\textwidth}
\begingroup%
  \makeatletter%
  \providecommand\color[2][]{%
    \errmessage{(Inkscape) Color is used for the text in Inkscape, but the package 'color.sty' is not loaded}%
    \renewcommand\color[2][]{}%
  }%
  \providecommand\transparent[1]{%
    \errmessage{(Inkscape) Transparency is used (non-zero) for the text in Inkscape, but the package 'transparent.sty' is not loaded}%
    \renewcommand\transparent[1]{}%
  }%
  \providecommand\rotatebox[2]{#2}%
  \ifx\svgwidth\undefined%
    \setlength{\unitlength}{187.98725348bp}%
    \ifx\svgscale\undefined%
      \relax%
    \else%
      \setlength{\unitlength}{\unitlength * \real{\svgscale}}%
    \fi%
  \else%
    \setlength{\unitlength}{\svgwidth}%
  \fi%
  \global\let\svgwidth\undefined%
  \global\let\svgscale\undefined%
  \makeatother%
  \begin{picture}(1,0.98726729)%
    \put(0,0){\includegraphics[width=\unitlength]{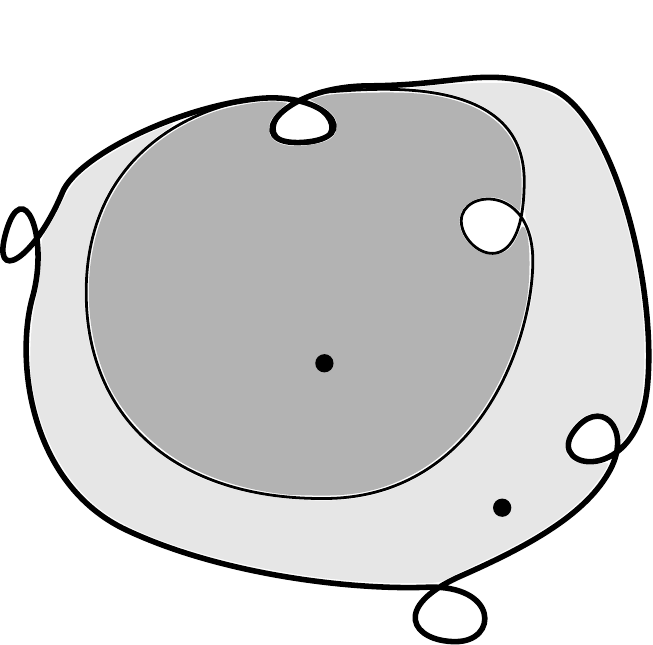}}%
    \put(0.66032452,0.915716){\color[rgb]{0,0,0}\makebox(0,0)[lb]{\smash{\small $f(\partial \Omega)$}}}%
    \put(0.71575316,0.24721122){\color[rgb]{0,0,0}\makebox(0,0)[lb]{\smash{\tiny $f(z_0)$}}}%
    \put(0.83322506,0.47768625){\color[rgb]{0,0,0}\makebox(0,0)[lb]{\smash{$D$}}}%
    \put(0.29778749,0.63744239){\color[rgb]{0,0,0}\makebox(0,0)[lb]{\smash{$\Omega$}}}%
    \put(0.43093459,0.47274){\color[rgb]{0,0,0}\makebox(0,0)[lb]{\smash{\ss $z_0$}}}%
  \end{picture}%
\endgroup%
\caption{Setup of Lemma~\ref{mapout} with the assumption~(a) (left) and~(a') (right).} 
\label{fig:mapout}
\end{figure}
\end{center}

\begin{rem*}
Observe that if $\Omega$ is simply connected with locally connected boundary,
then  $f(\bd\Omega)$ is allowed to have common points with $\bd\Omega$ (see Figure~\ref{fig:mapout}). A version of this lemma requiring $f(\bd\Omega)$ to be
disjoint from $\bd\Omega$ and $f(z_0) = \infty$ appeared in
\cite[Lemma 1]{berter}. 
\end{rem*}

\begin{proof}[Proof of Lemma~\rm\ref{mapout}]
By the assumption (b), there exists a component $D^{\prime}$ of $f\inv (D)$
containing $z_0$. Observe that 
\[
D^{\prime} \subset \Omega.
\]
To see this, suppose that $D^{\prime}$ is not contained in $\Omega$. 
Then there exists $z\in D^{\prime}\cap \bd\Omega$. Consequently, $f(z)\in 
D \cap f(\bd\Omega)$. This is a contradiction since, by definition, $D \cap 
f(\bd\Omega) =\emptyset$.

As a consequence, $D^{\prime}$ is bounded. Moreover, since $\Omega$ has finite
Euler characteristic, $\bd\Omega$ (and hence $f(\bd\Omega)$ and $\bd D$) has a
finite number of components, so $D$ has finite Euler characteristic.
Therefore, by Lemma~\ref{lem:proper}, 
$D'$ has finite Euler characteristic and the restriction $f:D^{\prime} \to D$ is
proper. Moreover, the assumption~(a) implies $\overline{D'}
\subset D$. Hence (possibly after a change of coordinates in
$\clC$ by a M\"obius transformation), $f:D^{\prime} \to D$ is a 
rational-like map, so by Theorem~\ref{rat-like}, the map $f$ has a weakly
repelling fixed point in $D^{\prime} \subset \Omega$.

Finally, assume that $\Omega$ is simply connected with locally
connected boundary, and the assumption~(a) is replaced by~(a'). Then $\bd\Omega$ (and hence $f(\bd\Omega)$) is a 
locally connected continuum in $\clC$, so $D$ is
simply connected and, by the Torhorst Theorem~\ref{theorem:torhorst}, has locally connected boundary. Moreover, since $D'\subset
\Omega \subset D$ and the boundary of $D$ is contained in $f(\bd \Omega)$, the
intersection of the boundaries of $D$ and $D'$ is either empty or is contained
in $\bd \Omega \cap f(\bd \Omega)$. 
This together with the condition~(a') implies that the restriction $f:D^{\prime} \to D$
satisfies the assumptions of Corollary~\ref{cor:boundarycontact}, which ends
the proof.
\end{proof}

In particular, Lemma~\ref{mapout} implies the following two corollaries (see Figures~\ref{fig:boundaryoutpole}--\ref{fig:boundaryoutout}).

\begin{cor}[\bf Continuum surrounds a pole and maps out] \label{cor:mapout1} 
Let $X \subset \C$ 
be a continuum and let  $f$ be a meromorphic map in a neighbourhood of $K(X)$. Suppose that:
\begin{itemize}
\item[$(a)$] $f$ has no poles in $X$, 
\item[$(b)$] $K(X)$ contains a pole of $f$,
\item[$(c)$] $K(X) \subset \ext(f(X))$.
\end{itemize}
Then $f$ has a weakly repelling fixed point in the interior of $K(X)$.
\end{cor}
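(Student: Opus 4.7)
The plan is to apply Lemma~\ref{mapout} to a bounded complementary component of $X$ that encloses a pole of $f$.

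First, I would select the domain. By (a) no pole of $f$ lies on $X$, so any pole $p\in K(X)$ given by (b) must sit in $K(X)\setminus X$, which is the union of the bounded components of $\C\setminus X$. Let $\Omega$ be the one containing $p$. Then $\Omega$ is an open bounded domain with $\partial\Omega\subset X$ and $\overline\Omega\subset K(X)$, so $f$ is meromorphic on a neighborhood of $\overline\Omega$. I would also verify that $\Omega$ is simply connected (and so of finite Euler characteristic): the set $\clC\setminus\Omega$ is the union of $X$ with the other components $W$ of $\clC\setminus X$, and each such $W$ satisfies $\partial W\subset X$, so $X\cup W$ is connected; hence $\clC\setminus\Omega$ is connected and $\Omega$ is simply connected.

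Next, I would identify the component $D$ required by Lemma~\ref{mapout}. Since $\partial\Omega\subset X$, we have $f(\partial\Omega)\subset f(X)$, and Lemma~\ref{lem:K}(d) then yields $\ext(f(X))\subset\ext(f(\partial\Omega))$. Combining with assumption~(c),
\[
\overline\Omega \;\subset\; K(X) \;\subset\; \ext(f(X)) \;\subset\; \ext(f(\partial\Omega)).
\]
Letting $D$ be the component of $\clC\setminus f(\partial\Omega)$ containing $\infty$, we obtain $\overline\Omega\subset D$, which is hypothesis~(a) of Lemma~\ref{mapout}. Choosing $z_0:=p\in\Omega$, the pole satisfies $f(z_0)=\infty\in D$, which is hypothesis~(b).

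Lemma~\ref{mapout} then furnishes a weakly repelling fixed point of $f$ in $\Omega$, and since $\Omega$ is an open subset of $K(X)$ it is contained in $\inter(K(X))$, yielding the corollary. I do not expect any real obstacle here; the only mild technical point is the simple connectivity of $\Omega$, which follows from the standard sphere argument above.
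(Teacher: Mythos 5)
Your proposal is correct and follows essentially the same route as the paper's proof: both take $\Omega$ to be the bounded (simply connected) complementary component of $X$ containing the pole $p$, set $D=\ext(f(\bd\Omega))$ using $f(\bd\Omega)\subset f(X)$ together with hypothesis~(c), and then apply Lemma~\ref{mapout} with $z_0=p$, $f(p)=\infty\in D$. Your explicit verification of the simple connectivity of $\Omega$ is a detail the paper leaves implicit, but it is the standard argument and changes nothing.
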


\begin{center}
\begin{figure}[hbt!]
\def\svgwidth{0.45\textwidth}
\begingroup%
  \makeatletter%
  \providecommand\color[2][]{%
    \errmessage{(Inkscape) Color is used for the text in Inkscape, but the package 'color.sty' is not loaded}%
    \renewcommand\color[2][]{}%
  }%
  \providecommand\transparent[1]{%
    \errmessage{(Inkscape) Transparency is used (non-zero) for the text in Inkscape, but the package 'transparent.sty' is not loaded}%
    \renewcommand\transparent[1]{}%
  }%
  \providecommand\rotatebox[2]{#2}%
  \ifx\svgwidth\undefined%
    \setlength{\unitlength}{237.08172759bp}%
    \ifx\svgscale\undefined%
      \relax%
    \else%
      \setlength{\unitlength}{\unitlength * \real{\svgscale}}%
    \fi%
  \else%
    \setlength{\unitlength}{\svgwidth}%
  \fi%
  \global\let\svgwidth\undefined%
  \global\let\svgscale\undefined%
  \makeatother%
  \begin{picture}(1,0.38161262)%
    \put(0,0){\includegraphics[width=\unitlength]{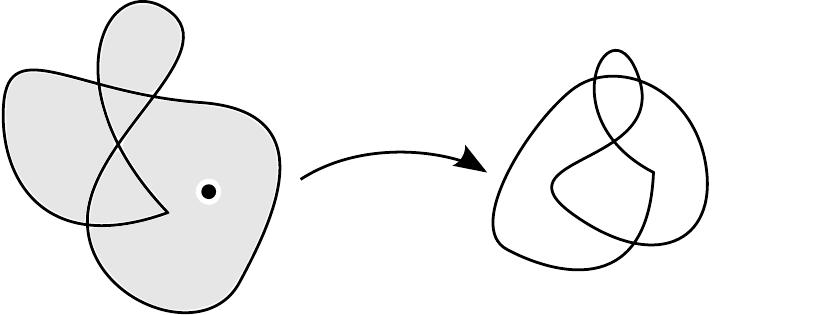}}%
    \put(0.80524992,0.28149383){\color[rgb]{0,0,0}\makebox(0,0)[lb]{\smash{\ss $f(X)$}}}%
    \put(0,0.20329486){\color[rgb]{0,0,0}\makebox(0,0)[lb]{\smash{\ss $K(X)$}}}%
    \put(0.26076438,0.27728506){\color[rgb]{0,0,0}\makebox(0,0)[lb]{\smash{\ss $X$}}}%
    \put(0.2343199,0.173851){\color[rgb]{0,0,0}\makebox(0,0)[lb]{\smash{\ss $p$}}}%
  \end{picture}%
\endgroup%
\caption{Setup of Corollary~\ref{cor:mapout1}.} 
\label{fig:boundaryoutpole}
\end{figure}
\end{center}

\begin{proof} Let $p \in K(X)$ be a pole of $f$. Observe that by the assumption~(a), the set $f(X)$ (and hence $K(f(X))$) is a continuum in
$\C$. Moreover, (a) implies
\[
p \in \Omega \subset \overline\Omega \subset K(X)
\]
for a bounded simply connected component $\Omega$ of $\clC \setminus X$. 
We have $\bd\Omega \subset X$, which gives $f(\bd\Omega) \subset f(X)$, so by the assumption~(c),
\[
K(X) \subset \ext(f(\bd\Omega)),
\]
which implies $\overline\Omega \subset\ext(f(\bd\Omega))$.

Let $D = \ext(f(\bd\Omega))$. We have $\overline\Omega \subset D$, 
$p \in \Omega$ and $f(p) = \infty \in D$. Hence, the assumptions of
Lemma~\ref{mapout} are satisfied for
$\Omega, D, p$, so $f$ has a weakly
repelling fixed point in $\Omega$, which is a subset of the interior of
$K(X)$.
\end{proof}

\begin{cor}[\bf Continuum maps out twice] \label{cor:mapout2} 
Let $X \subset \C$ be a continuum and let
$f$ be a meromorphic map in a neighbourhood of $X \cup K(f(X))$.
Suppose that:
\begin{itemize}
\item[$(a)$] $f$ has no poles in $X$, 
\item[$(b)$] $X \subset K(f(X))$,
\item[$(c)$] $f^2(X) \subset \ext(f(X))$.
\end{itemize}
Then $f$ has a weakly repelling fixed point in the interior of $K(f(X))$.
\end{cor}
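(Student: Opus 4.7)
The plan is to apply Lemma~\ref{mapout} (Boundary maps out) to a suitable bounded simply connected component $\Omega$ of $\C \setminus f(X)$ that meets $X$. The setup is parallel to that of Corollary~\ref{cor:mapout1}: the role played there by a pole $p \in K(X)$, which maps to $\infty \in \ext(f(X))$, will be played here by a point $x_0 \in X$ lying in a bounded complementary component of $f(X)$; its image, being in $f(X)$, turns out to lie in the same component of $\clC \setminus f(\bd \Omega)$ as $\overline{\Omega}$.

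First, I will observe that hypotheses~(b) and~(c) together force $X \not\subset f(X)$. Indeed, if $X \subset f(X)$, then applying $f$ gives $f(X) \subset f^2(X)$, and by~(c), $f^2(X) \subset \ext(f(X))$, so $f(X) \subset \ext(f(X))$, contradicting $f(X) \cap \ext(f(X)) = \emptyset$ (note $f(X) \neq \emptyset$ by~(a), since $X$ is a non-empty continuum). Hence I may pick $x_0 \in X \setminus f(X)$; by~(b), $x_0 \in K(f(X)) \setminus f(X)$, so $x_0$ belongs to some bounded component $\Omega$ of $\C \setminus f(X)$. Since $f(X)$ is a continuum, $\Omega$ is simply connected, with $\bd \Omega \subset f(X)$ and $\overline{\Omega} \subset K(f(X))$.

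Next I will verify the hypotheses of Lemma~\ref{mapout} for this $\Omega$. The map $f$ is meromorphic on a neighbourhood of $\overline{\Omega}$, since $\overline{\Omega} \subset K(f(X))$. By~(c), $f(\bd \Omega) \subset f^2(X) \subset \ext(f(X))$, so $f(\bd \Omega)$ is disjoint from $K(f(X))$. Since $K(f(X))$ is connected by Lemma~\ref{lem:K}(a), it lies in a single component $D$ of $\clC \setminus f(\bd \Omega)$ -- namely the one containing $\overline{\Omega}$. In particular, $f(X) \subset K(f(X)) \subset D$, and taking $z_0 = x_0 \in \Omega$ we get $f(z_0) \in f(X) \subset D$. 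Lemma~\ref{mapout} then produces a weakly repelling fixed point of $f$ in $\Omega$, which is an open subset of $K(f(X))$ and therefore lies in $\inter K(f(X))$.

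The main step is the verification of condition~(b) of Lemma~\ref{mapout}: exhibiting a $z_0 \in \Omega$ with $f(z_0) \in D$. The decisive input is the connectedness of $K(f(X))$, which forces the whole set $K(f(X))$ -- hence both $\overline{\Omega}$ and $f(X)$ -- into a single component $D$ of $\clC \setminus f(\bd \Omega)$, so that any point of the non-empty intersection $\Omega \cap X$ serves as a valid $z_0$.
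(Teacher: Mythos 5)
Your proof is correct and follows essentially the same route as the paper: both arguments apply Lemma~\ref{mapout} to a bounded simply connected complementary component $\Omega$ of $f(X)$ meeting $X$, taking $D$ to be the component of $\clC\setminus f(\bd\Omega)$ containing the connected set $K(f(X))$ (which is disjoint from $f(\bd\Omega)\subset f^2(X)$ by hypothesis (c)), and $z_0\in X\cap\Omega$. The only immaterial difference is that the paper observes $X\cap f(X)=\emptyset$, so that all of $X$ lies in $\Omega$, whereas you extract just one point $x_0\in X\setminus f(X)$, which suffices.
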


\begin{center}
\begin{figure}[hbt!]
\def\svgwidth{0.65\textwidth}
\begingroup%
  \makeatletter%
  \providecommand\color[2][]{%
    \errmessage{(Inkscape) Color is used for the text in Inkscape, but the package 'color.sty' is not loaded}%
    \renewcommand\color[2][]{}%
  }%
  \providecommand\transparent[1]{%
    \errmessage{(Inkscape) Transparency is used (non-zero) for the text in Inkscape, but the package 'transparent.sty' is not loaded}%
    \renewcommand\transparent[1]{}%
  }%
  \providecommand\rotatebox[2]{#2}%
  \ifx\svgwidth\undefined%
    \setlength{\unitlength}{360.73447633bp}%
    \ifx\svgscale\undefined%
      \relax%
    \else%
      \setlength{\unitlength}{\unitlength * \real{\svgscale}}%
    \fi%
  \else%
    \setlength{\unitlength}{\svgwidth}%
  \fi%
  \global\let\svgwidth\undefined%
  \global\let\svgscale\undefined%
  \makeatother%
  \begin{picture}(1,0.35797389)%
    \put(0,0){\includegraphics[width=\unitlength]{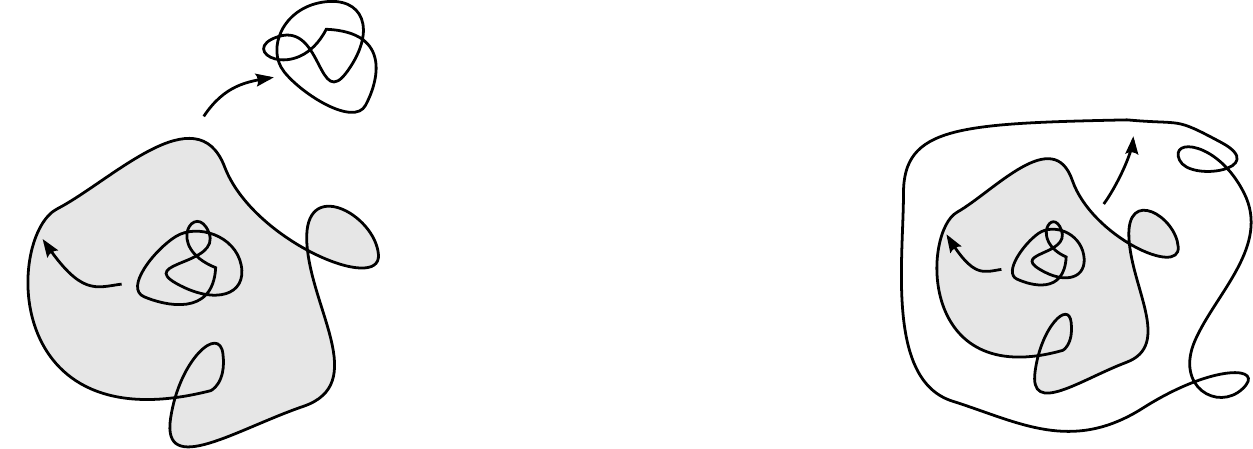}}%
    \put(-0,0.21468287){\color[rgb]{0,0,0}\makebox(0,0)[lb]{\smash{\ss $f(X)$}}}%
    \put(0.11104908,0.08932744){\color[rgb]{0,0,0}\makebox(0,0)[lb]{\smash{\ss $X$}}}%
    \put(0.12692485,0.31371206){\color[rgb]{0,0,0}\makebox(0,0)[lb]{\smash{\ss$f^2(X)$}}}%
    \put(0.84964011,0.1118699){\color[rgb]{0,0,0}\makebox(0,0)[lb]{\smash{\ss $X$}}}%
    \put(0.7246421,0.20212971){\color[rgb]{0,0,0}\makebox(0,0)[lb]{\smash{\ss$f(X)$}}}%
    \put(0.82785575,0.27326345){\color[rgb]{0,0,0}\makebox(0,0)[lb]{\smash{\ss $f^2(X)$}}}%
  \end{picture}%
\endgroup%
\caption{Two possible setups of Corollary~\ref{cor:mapout2}.} 
\label{fig:boundaryoutout}
\end{figure}
\end{center}

\begin{proof} By the assumption (a), the set $f(X)$ (and hence $K(f(X))$) is a continuum in
$\C$ and $f^2(X)$ is a continuum in $\clC$. Moreover, $X \cap f(X) = \emptyset$
(otherwise $f(X) \cap f^2(X) \ne \emptyset$, which contradicts the assumption~(c)). Hence, by (b),
\[
X \subset \Omega \subset \overline\Omega \subset K(f(X))
\]
for some bounded simply connected component $\Omega$ of $\clC \setminus f(X)$. 
We have $\bd\Omega \subset f(X)$, so $f(\bd\Omega) \subset f^2(X)$ and by the assumption~(c),
\[
K(f(X)) \subset \clC \setminus f^2(X) \subset \clC \setminus f(\bd\Omega),
\]
which gives $K(f(X)) \subset D$ for some component $D$ of $\clC
\setminus f(\bd\Omega)$. Consequently, $\overline\Omega \subset K(f(X))\subset D$. Moreover, for any $z_0 \in X$ we have $z_0 \in \Omega$ and $f(z_0) \in f(X)
\subset D$. Hence, the assumptions of Lemma~\ref{mapout} are satisfied for
$\Omega, D, z_0$, so $f$ has a weakly
repelling fixed point in $\Omega$, which is contained in the interior of
$K(f(X))$.
\end{proof}

The previous results gave some conditions for the existence of a weakly repelling fixed point in the case when
a closed curve is mapped by $f$ into its exterior. The following proposition, which is a considerable generalization of Shishikura's Theorem~\ref{shishikura}, gives conditions for the existence of a weakly repelling fixed point in the case when a closed curve before mapping out is mapped by $f$ several times into its interior (see Figure \ref{fig:mapin}).

\begin{prop}[\bf Boundary maps in] \label{mapin} Let $\Omega \subset \C$ be a
bounded simply connected domain and let $f$ be a meromorphic map in a
neighbourhood of $\overline{\Omega}$. Suppose that:
\begin{itemize}
\item[$(a)$] there exists $m \ge 2$, such that $f^m$ is defined on
$\bd \Omega$, 
\item[$(b)$] $f^j(\bd \Omega) \subset \overline{\Omega}$ for $j = 1,
\ldots, m - 1$,
\item[$(c)$] $f^m(\bd \Omega) \cap \overline{\Omega} = \emptyset$.
\end{itemize}
Then $f$ has a weakly repelling fixed point in $\Omega$.
\end{prop}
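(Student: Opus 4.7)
The proposition generalizes Shishikura's surgery theorem (Theorem~\ref{shishikura}), and my plan is to reduce it to Lemma~\ref{mapout} (Boundary maps out) applied to the iterate $F=f^m$ on $\Omega$, following the pattern used throughout this section. The target domain is the component $D$ of $\clC\setminus f^m(\bd\Omega)$ that contains $\overline{\Omega}$; this $D$ is well-defined and contains $\overline{\Omega}$ precisely because (c) guarantees $f^m(\bd\Omega)\cap\overline{\Omega}=\emptyset$ and $\bd\Omega$ is a continuum (as $\Omega$ is simply connected). So hypothesis~(a) of Lemma~\ref{mapout} is immediate.

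The heart of the argument is to verify hypothesis~(b): the existence of $z_0\in\Omega$ with $f^m(z_0)\in D$. A crucial first remark is that $f^{m-1}(\bd\Omega)\subset\Omega$ (strictly interior): if some $z\in\bd\Omega$ had $f^{m-1}(z)\in\bd\Omega$, then by (b) we would get $f^m(z)=f(f^{m-1}(z))\in f(\bd\Omega)\subset\overline{\Omega}$, contradicting (c). Hence near each $\xi=f^{m-1}(z)\in\Omega$, the map $f$ sends an interior point of $\Omega$ to $f(\xi)=f^m(z)\in\clC\setminus\overline{\Omega}\subset D$. Pulling this back one step at a time through $f^{m-1}$, which is defined and meromorphic on a neighborhood of $\bd\Omega$, I would construct the required $z_0$; if $f$ has a pole $p$ in $\overline{\Omega}$ whose forward orbit under $f$ avoids the remaining poles for fewer than $m$ iterates, the meromorphic extension $f^m(p)=\infty$ already lies in $D$, since $D$ is the unbounded component of $\clC\setminus f^m(\bd\Omega)$.

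Once (a) and (b) of Lemma~\ref{mapout} are verified, the lemma (applied, if necessary, on $\Omega$ minus the discrete set of points where $f^m$ is undefined, which reduces to the rational-like setting of Theorem~\ref{rat-like}) produces a weakly repelling fixed point $w$ of $F=f^m$ in $\Omega$. The final step is to promote $w$ to a weakly repelling fixed point of $f$ itself. The plan is to use a minimality argument: choose $m\geq 2$ minimal among integers for which a domain $\Omega$ satisfying~(a)--(c) exists around $w$. If the true period $p$ of $w$ under $f$ satisfied $1<p<m$, the orbit of $w$ together with a shrunk version of $\Omega$ would produce a configuration with return-time $p<m$, contradicting minimality; hence $p=1$ and $f(w)=w$. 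Weak repellence of $w$ under $f$ then follows from the chain-rule identity $(f^m)'(w)=\prod_{j=0}^{m-1}f'(f^j(w))=f'(w)^m$ when $p=1$.

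\paragraph{Main obstacle.} I expect the delicate point to be Step~5, i.e.\ passing from a weakly repelling fixed point of $f^m$ to one of $f$. A direct minimality reduction is conceptually clean but must be executed carefully to ensure the ``shrunk'' domain along a shorter orbit inherits the simple connectedness and boundary-iteration hypotheses. An alternative that may be cleaner is to pull back $\clC\setminus\overline{\Omega}$ step-by-step through $f$, producing a nested tower $\Omega=\Omega_0\supset\Omega_1\supset\cdots\supset\Omega_{m-1}$ of simply connected subdomains on which $f$ itself (not $f^m$) acts as a rational-like or boundary-contact map via Theorem~\ref{rat-like} or Corollary~\ref{cor:boundarycontact}; this would directly yield a fixed point of $f$, bypassing the need to convert period-$m$ information into period-$1$ information.
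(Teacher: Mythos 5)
There is a genuine gap, and in fact two. First, your central reduction applies Lemma~\ref{mapout} (equivalently Theorem~\ref{rat-like}) to $F=f^m$ on $\Omega$, but the hypotheses of the proposition do not make $f^m$ meromorphic -- or even defined -- on a neighbourhood of $\overline{\Omega}$: only the iterates of $\bd\Omega$ are controlled. The map $f$ is merely assumed meromorphic \emph{near} $\overline{\Omega}$, so interior orbits may leave the domain of definition after one step; and even when $f$ is globally meromorphic (the case of interest), in the nontrivial situation $\Omega$ must contain poles and prepoles of $f$, and at a pole of a transcendental $f$ the composition $f^2$ has no meromorphic extension (the pole is sent to the essential singularity at $\infty$). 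Your parenthetical fix -- deleting the ``discrete set of points where $f^m$ is undefined'' -- does not restore the setting of Theorem~\ref{rat-like}: one loses properness of $F:\Omega'\to D$, compact containment, and finite Euler characteristic, and the phrase ``the meromorphic extension $f^m(p)=\infty$'' has no meaning here. Second, even granting a weakly repelling fixed point $w$ of $f^m$ in $\Omega$, your promotion of $w$ to a fixed point of $f$ does not work: a weakly repelling fixed point of $f^m$ is in general a periodic point of $f$ of period dividing $m$, minimality of $m$ for the \emph{domain} configuration says nothing about the period of $w$, and the chain-rule identity $(f^m)'(w)=f'(w)^m$ presupposes $f(w)=w$, which is exactly what is to be shown. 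This last step is the crux of the statement (it asserts a fixed point of $f$, not of $f^m$), and it is not salvageable along these lines.

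For comparison, the paper never iterates $f$ on the interior of $\Omega$ and never considers fixed points of $f^m$. It works only with the curves $f^j(\bd\Omega)$, first showing they are pairwise disjoint with $K(f^j(\bd\Omega))\subset\Omega$, reducing to the case where each image lies in the exterior of the previous one, and then replacing $f(\bd\Omega)$ by a nearby Jordan curve $\sigma_1$ and pulling back its exterior by a \emph{single} application of $f$ (to which Lemma~\ref{lem:proper} and Buff's results legitimately apply) to produce a Jordan curve $\sigma_0\subset\Omega$ with $f(\sigma_0)=\sigma_1$ and a pole in $\inter(\sigma_0)$. A case analysis using Corollaries~\ref{cor:mapout1} and~\ref{cor:mapout2} and Lemma~\ref{mapout} -- each time applied to $f$ itself, yielding a weakly repelling fixed point of $f$ and hence a contradiction -- whittles the configuration down to exactly the hypotheses of Shishikura's surgery Theorem~\ref{shishikura} with $V_0=\ext(\sigma_0)$, $V_1=\inter(\sigma_1)$, which then delivers the weakly repelling fixed point of $f$ in $\inter(\sigma_0)\subset\Omega$ directly. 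Your alternative closing remark (pulling back $\clC\setminus\overline{\Omega}$ step by step through $f$) points in roughly this direction, but it is only a sketch and omits the case analysis and the surgery step that make the argument close.
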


\begin{figure}[hbt!]
\def\svgwidth{0.4\textwidth}
\begingroup%
  \makeatletter%
  \providecommand\color[2][]{%
    \errmessage{(Inkscape) Color is used for the text in Inkscape, but the package 'color.sty' is not loaded}%
    \renewcommand\color[2][]{}%
  }%
  \providecommand\transparent[1]{%
    \errmessage{(Inkscape) Transparency is used (non-zero) for the text in Inkscape, but the package 'transparent.sty' is not loaded}%
    \renewcommand\transparent[1]{}%
  }%
  \providecommand\rotatebox[2]{#2}%
  \ifx\svgwidth\undefined%
    \setlength{\unitlength}{486.00760295bp}%
    \ifx\svgscale\undefined%
      \relax%
    \else%
      \setlength{\unitlength}{\unitlength * \real{\svgscale}}%
    \fi%
  \else%
    \setlength{\unitlength}{\svgwidth}%
  \fi%
  \global\let\svgwidth\undefined%
  \global\let\svgscale\undefined%
  \makeatother%
  \begin{picture}(1,0.57315371)%
    \put(0,0){\includegraphics[width=\unitlength]{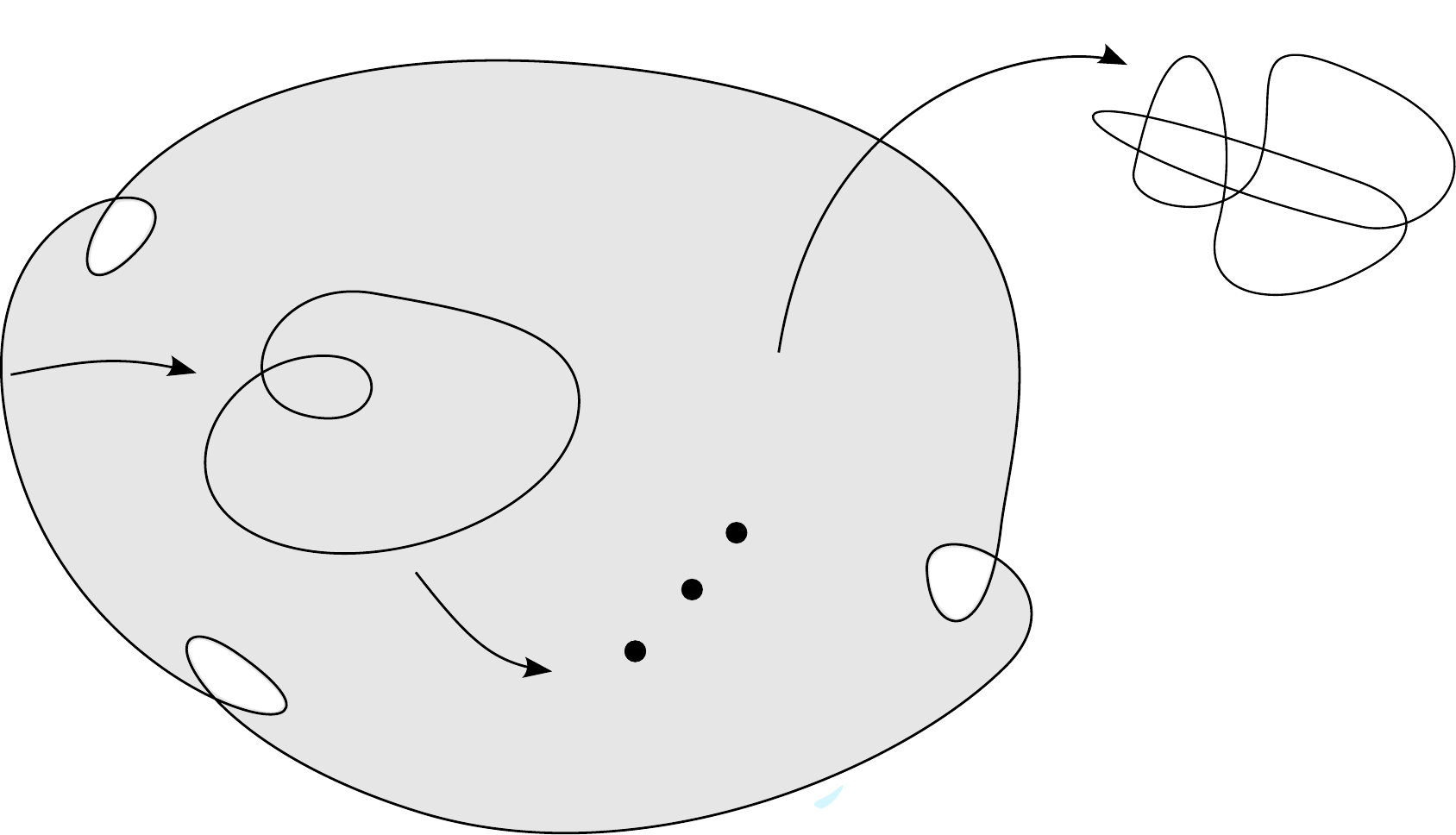}}%
    \put(0.41961323,0.44296727){\color[rgb]{0,0,0}\makebox(0,0)[lb]{\smash{$\Omega$}}}%
    \put(0.82504193,0.54947299){\color[rgb]{0,0,0}\makebox(0,0)[lb]{\smash{\small $f^m(\partial \Omega)$}}}%
    \put(0.1498929,0.38529126){\color[rgb]{0,0,0}\makebox(0,0)[lb]{\smash{\ss $f(\partial \Omega)$}}}%
  \end{picture}%
\endgroup%
\caption{Possible setup for Proposition~\ref{mapin}.}
\label{fig:mapin}
\end{figure}

\begin{proof} We proceed by contradiction, i.e.~we assume that $f$ has no weakly repelling fixed points in $\Omega$. The proof is split into a number of steps.

\subsection*{Step 1.} First, note that the simple connectedness of $\Omega$ implies that 
$\bd\Omega$ (and hence $f^j(\bd \Omega)$ for $j = 1, \ldots, m$) is connected. Moreover,
the following conditions are satisfied:
\begin{align}
\label{eq:disjoint1} &\bd \Omega, f(\bd \Omega),\ldots, f^m(\bd \Omega) \text{ are pairwise
disjoint}, \\
& K(f^j(\bd \Omega)) \subset \Omega \quad \text{for } j = 1, \ldots, m - 1. \label{eq:inOmega}
\end{align}
To see \eqref{eq:disjoint1}, notice that if $z \in f^{j'}(\bd \Omega) \cap f^{j''}(\bd \Omega)$ for some $0 \leq j' < j'' \leq m$, then $f^{m - j''}(z) \in f^{m + j' - j''}(\bd \Omega) \cap
f^m(\bd \Omega)$ and $0 \leq m + j' - j'' < m$, which contradicts the assumptions~(b)--(c). Hence, \eqref{eq:disjoint1} follows. Now \eqref{eq:disjoint1} together with (b) implies \eqref{eq:inOmega}.

\subsection*{Step 2.}  We show that we can reduce the proof to the case
\begin{equation}\label{eq:not_in}
f^{j+1}(\bd \Omega) \subset \ext(f^j(\bd \Omega))
\quad \text{for } j = 1, \ldots, m - 1.
\end{equation}
To see this, suppose that there exists
$j_0 \in \{ 1, \ldots, m - 1\}$ such that $f^{j_0+1}(\bd \Omega) \subset K(
f^{j_0}(\bd \Omega))$ and take the maximal number $j_0$ with this
property. Then by the assumption~(c) and \eqref{eq:inOmega}, $j_0 \neq m-1$ and
$f^{j_0+1}(\bd \Omega) \subset \Omega_0 \subset \Omega$ for some
bounded simply connected component $\Omega_0$ of $\clC \setminus f^{j_0}(\bd
\Omega)$. We have
$f^k(\bd\Omega_0) \subset f^{k+j_0}(\bd\Omega)$ for $k\ge 0$. Hence, it follows from
\eqref{eq:inOmega} and (c) that there exists $m_0 \ge 2$ such that $f^j(\bd
\Omega_0) \subset
\Omega_0$ for $j = 1, \ldots, m_0 - 1$ and $f^{m_0}(\bd \Omega_0) \cap
\overline{\Omega_0} = \emptyset$. Thus, the assumptions~(a)--(c)
are satisfied for $\Omega_0$, $m_0$. Since $j_0$ was maximal, this implies that replacing, respectively, 
$\Omega$  and $m$ by $\Omega_0$ and $m_0$, we can assume 
$f^{j+1}(\bd \Omega) \not\subset K(f^j(\bd \Omega))$ for $j = 1, \ldots, m - 1$.
Since by \eqref{eq:disjoint1}, there is no intersection between the images of $\bd\Omega$, we have proven that we can reduce the proof to the case \eqref{eq:not_in}.

\subsection*{Step 3.} We claim that there exists a Jordan curve $\sigma_1 \subset \C$ close to $f(\bd \Omega)$ such that:
\begin{align}
& \inter(\sigma_1)\supset K(f(\bd \Omega)),\label{eq:Kinsigma} \\
&\text{$\sigma_1$ contains no values of critical points of $f$ in $\overline{\Omega}$}\notag,\\
&\sigma_1, f(\sigma_1), \ldots, f^{m-2}(\sigma_1) \text{ are pairwise disjoint subsets of $\Omega$} \quad
\text{and} \quad f^{m-1}(\sigma_1) \cap \overline{\Omega} = \emptyset, \label{eq:disjoint'} \\
&f^{j+1}(\sigma_1) \subset \ext(f^j(\sigma_1)) \quad \text{for } \quad j = 0, \ldots, m - 2. \label{eq:not_in'}
\end{align}
The existence of a curve satisfying these four conditions follows easily from \eqref{eq:disjoint1}, \eqref{eq:inOmega}, \eqref{eq:not_in}, the assumption~(c) and the fact that the set of critical points in $\overline{\Omega}$ is finite.

We then consider the set 
\[
D=\ext(\sigma_1).
\]
By the assumption~(c) and \eqref{eq:disjoint'}, we have $f^m(\bd \Omega) \subset D$. 
Hence, there exists a component $D^{\prime}$ of $f\inv(D)$ containing $f^{m-1}(\bd \Omega)$. By definition, $D^{\prime}$ intersects $\Omega$ and contains a pole of
$f$. Consequently,
\[
D' \subset \Omega,
\]
because otherwise $D' \cap \bd \Omega\neq \emptyset$, so $D \cap f(\bd
\Omega) \neq \emptyset$, which is impossible by \eqref{eq:Kinsigma}. 
Therefore, $D'$ is bounded and by Lemma~\ref{lem:proper}, it has finite Euler
characteristic and the restriction $f:D^{\prime} \to D$ is proper. In fact,
since $\bd D$ contains no values of critical points of $f$ in $\bd D'$,
the boundary of $D'$ consists of finitely many disjoint
Jordan curves, $f$ is a finite degree
covering in a neighbourhood of every component of $\bd D'$ and maps this
component onto $\sigma_1$. 

We now define $\sigma_0$ to be the Jordan curve, which is the boundary of the unbounded
component of $\clC\setminus D'$. Notice that $D' \subset \inter(\sigma_0) \subset \Omega$, moreover $\inter(\sigma_0)$ 
contains a pole of $f$ and $f(\sigma_0) = \sigma_1$. We will use the notation
\[
\sigma_j = f^j(\sigma_0).
\]
By \eqref{eq:disjoint'}, we have $\sigma_0 \cap
\sigma_j = \emptyset$ for $j = 1, \ldots, m$, which means
that
\begin{equation}\label{eq:disjoint''}
K(\sigma_j) \subset \inter(\sigma_0) \text{ or } \sigma_j
\subset \ext(\sigma_0).
\end{equation}
(see Figure \ref{fig:proof_Lemma_map_in_1}). Finally, we note that $\sigma_0$ and $\sigma_1$ are, by construction, Jordan curves, while $\sigma_j$ for $j=2,\ldots m$ 
are closed curves, which are not necessarily Jordan.

\begin{center}
\begin{figure}[hbt!]
\def\svgwidth{0.9\textwidth}
\begingroup%
  \makeatletter%
  \providecommand\color[2][]{%
    \errmessage{(Inkscape) Color is used for the text in Inkscape, but the package 'color.sty' is not loaded}%
    \renewcommand\color[2][]{}%
  }%
  \providecommand\transparent[1]{%
    \errmessage{(Inkscape) Transparency is used (non-zero) for the text in Inkscape, but the package 'transparent.sty' is not loaded}%
    \renewcommand\transparent[1]{}%
  }%
  \providecommand\rotatebox[2]{#2}%
  \ifx\svgwidth\undefined%
    \setlength{\unitlength}{801.52098874bp}%
    \ifx\svgscale\undefined%
      \relax%
    \else%
      \setlength{\unitlength}{\unitlength * \real{\svgscale}}%
    \fi%
  \else%
    \setlength{\unitlength}{\svgwidth}%
  \fi%
  \global\let\svgwidth\undefined%
  \global\let\svgscale\undefined%
  \makeatother%
  \begin{picture}(1,0.30903312)%
    \put(0,0){\includegraphics[width=\unitlength]{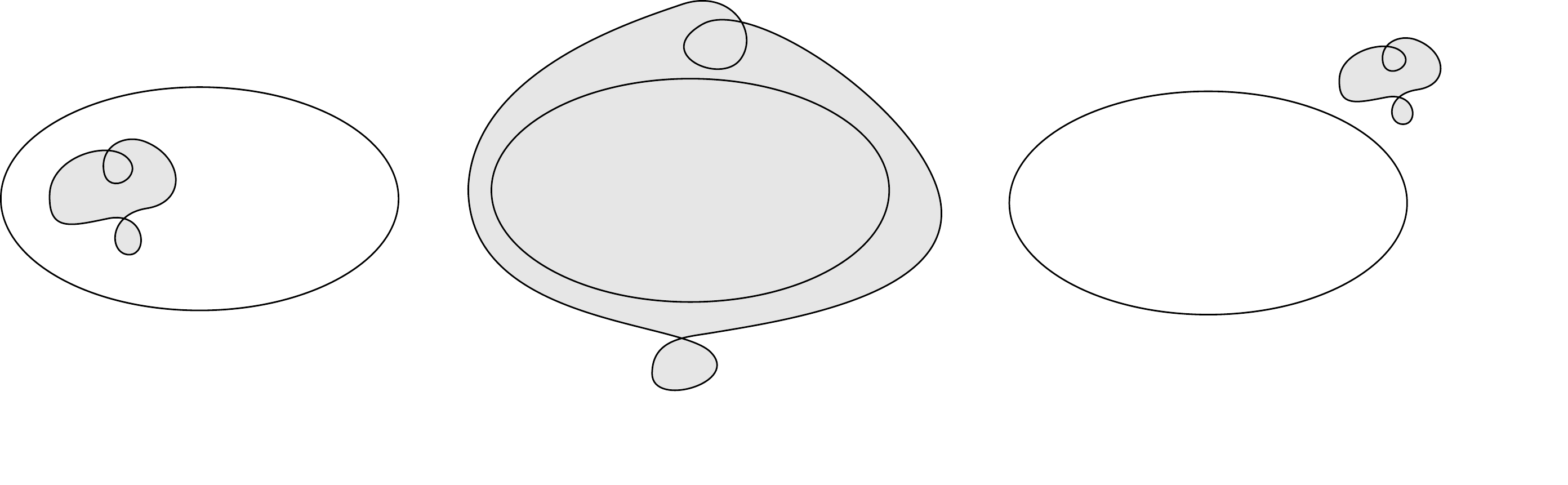}}%
    \put(0.20559516,0.23950374){\color[rgb]{0,0,0}\makebox(0,0)[lb]{\smash{\small $\sigma_0$}}}%
    \put(0.11067809,0.20453048){\color[rgb]{0,0,0}\makebox(0,0)[lb]{\smash{\small $\sigma_j$}}}%
    \put(0.57362292,0.23417315){\color[rgb]{0,0,0}\makebox(0,0)[lb]{\smash{\small $\sigma_j$}}}%
    \put(0.70256694,0.25012282){\color[rgb]{0,0,0}\makebox(0,0)[lb]{\smash{\small $\sigma_0$}}}%
    \put(0.81595196,0.26374779){\color[rgb]{0,0,0}\makebox(0,0)[lb]{\smash{\small $\sigma_j$}}}%
    \put(0.03730696,0.02141664){\color[rgb]{0,0,0}\makebox(0,0)[lb]{\smash{\small $K(\sigma_j)\subset \inter(\sigma_0)$}}}%
    \put(0.36670016,0.03861031){\color[rgb]{0,0,0}\makebox(0,0)[lb]{\smash{\small $\sigma_j \subset \ext(\sigma_0)$}}}%
    \put(0.36739599,0.00683848){\color[rgb]{0,0,0}\makebox(0,0)[lb]{\smash{\small $\sigma_0 \subset K(\sigma_j)$}}}%
    \put(0.67661948,0.0037434){\color[rgb]{0,0,0}\makebox(0,0)[lb]{\smash{\small $\sigma_0\cup K(\sigma_j) = \emptyset$}}}%
    \put(0.67755056,0.03217835){\color[rgb]{0,0,0}\makebox(0,0)[lb]{\smash{\small $\sigma_j \subset \ext(\sigma_0)$}}}%
    \put(0.39184766,0.26731356){\color[rgb]{0,0,0}\makebox(0,0)[lb]{\smash{\small $\sigma_0$}}}%
  \end{picture}%
\endgroup%
\caption{Possible relative distribution of the curve $\sigma_j$ for some 
$j=1,\ldots m$ and the curve $\sigma_0$.}
\label{fig:proof_Lemma_map_in_1}
\end{figure}
\end{center}

\subsection*{Step 4.} We show that the following conditions hold:
\begin{align}
&K(\sigma_1) \subset \inter(\sigma_0), \label{eq:sigma_in}\\
&K(\sigma_j) \subset \ext(\sigma_{j+1}) \text{ for } j = 1, \ldots, m - 2,\label{eq:in_ext}\\
&f \text{ has no poles in } K(\sigma_j)\quad \text{for } j = 1, \ldots, m -2.\label{eq:no_poles}
\end{align}
To prove it, note first that if $\sigma_j \subset K(\sigma_{j+1})$ for
some $j \in \{ 0, \ldots, m - 2\}$, then for $X_1 = \sigma_j$ we have
$X_1 \subset K(f(X_1))$, $f$ has no poles in $X_1$ and, by \eqref{eq:not_in'},
$f^2(X_1) \subset \ext(f(X_1))$, so the assumptions of Corollary~\ref{cor:mapout2} are satisfied for $X_1$. Hence,
$f$ has a weakly repelling fixed point in $K(f(X_1)) = K(f^{j+1}(\sigma_0))$,
which is contained in $\Omega$ by \eqref{eq:disjoint'}. This makes a contradiction. Hence, we have $\sigma_j \not\subset
K(\sigma_{j+1})$ for $j = 0, \ldots, m - 2$, which together with  \eqref{eq:not_in'} and \eqref{eq:disjoint''} shows $K(\sigma_1) \cap \sigma_0 = \emptyset$ and \eqref{eq:in_ext}. 

To end the proof of \eqref{eq:sigma_in}, it remains to exclude the case $K(\sigma_0) \subset \ext(\sigma_1)$. If it holds, then (since $\inter(\sigma_0)$ contains a pole of $f$), the assumptions of Corollary~\ref{cor:mapout1} are satisfied for $X = \sigma_0$. Hence, $f$ has a
weakly repelling fixed point in $K(\sigma_0) \subset \Omega$, which is a contradiction. In this way we have proved \eqref{eq:sigma_in}. 

Finally, to show \eqref{eq:no_poles}, suppose that
$f$ has a pole in $K(f^j(\sigma_0))$ for some 
$j \in \{ 1, \ldots, m - 2\}$ and take $X_2 = f^j(\sigma_0)$. Then by \eqref{eq:in_ext}, we have $K(X_2) \subset \ext(f(X_2))$, moreover $f$ has no poles in $X_2$ and $K(X_2)$ contains a pole of $f$, so by
Corollary~\ref{cor:mapout1} for $X_2$, the map $f$
has a weakly repelling fixed point in $K(X_2) = K(f^j(\sigma_0))$, which is
contained in $\Omega$ by \eqref{eq:disjoint'}. This is a contradiction. Hence, the assertion \eqref{eq:no_poles} is proved.

Notice that by \eqref{eq:disjoint'}, \eqref{eq:disjoint''} and \eqref{eq:sigma_in}, there exists $1 \le k \le m-1$, such that 
\begin{equation}\label{eq:f^k}
K(\sigma_j) \subset \inter(\sigma_0) \text{ for } j = 1, \ldots, k
\quad \text{and} \quad \sigma_{k+1} \subset \ext(\sigma_0)
\end{equation}
(see Figure \ref{fig:proof_Lemma_map_in_2}). 
\begin{figure}[hbt!]
\def\svgwidth{0.45\textwidth}
\begingroup%
  \makeatletter%
  \providecommand\color[2][]{%
    \errmessage{(Inkscape) Color is used for the text in Inkscape, but the package 'color.sty' is not loaded}%
    \renewcommand\color[2][]{}%
  }%
  \providecommand\transparent[1]{%
    \errmessage{(Inkscape) Transparency is used (non-zero) for the text in Inkscape, but the package 'transparent.sty' is not loaded}%
    \renewcommand\transparent[1]{}%
  }%
  \providecommand\rotatebox[2]{#2}%
  \ifx\svgwidth\undefined%
    \setlength{\unitlength}{576.07617244bp}%
    \ifx\svgscale\undefined%
      \relax%
    \else%
      \setlength{\unitlength}{\unitlength * \real{\svgscale}}%
    \fi%
  \else%
    \setlength{\unitlength}{\svgwidth}%
  \fi%
  \global\let\svgwidth\undefined%
  \global\let\svgscale\undefined%
  \makeatother%
  \begin{picture}(1,0.58425626)%
    \put(0,0){\includegraphics[width=\unitlength]{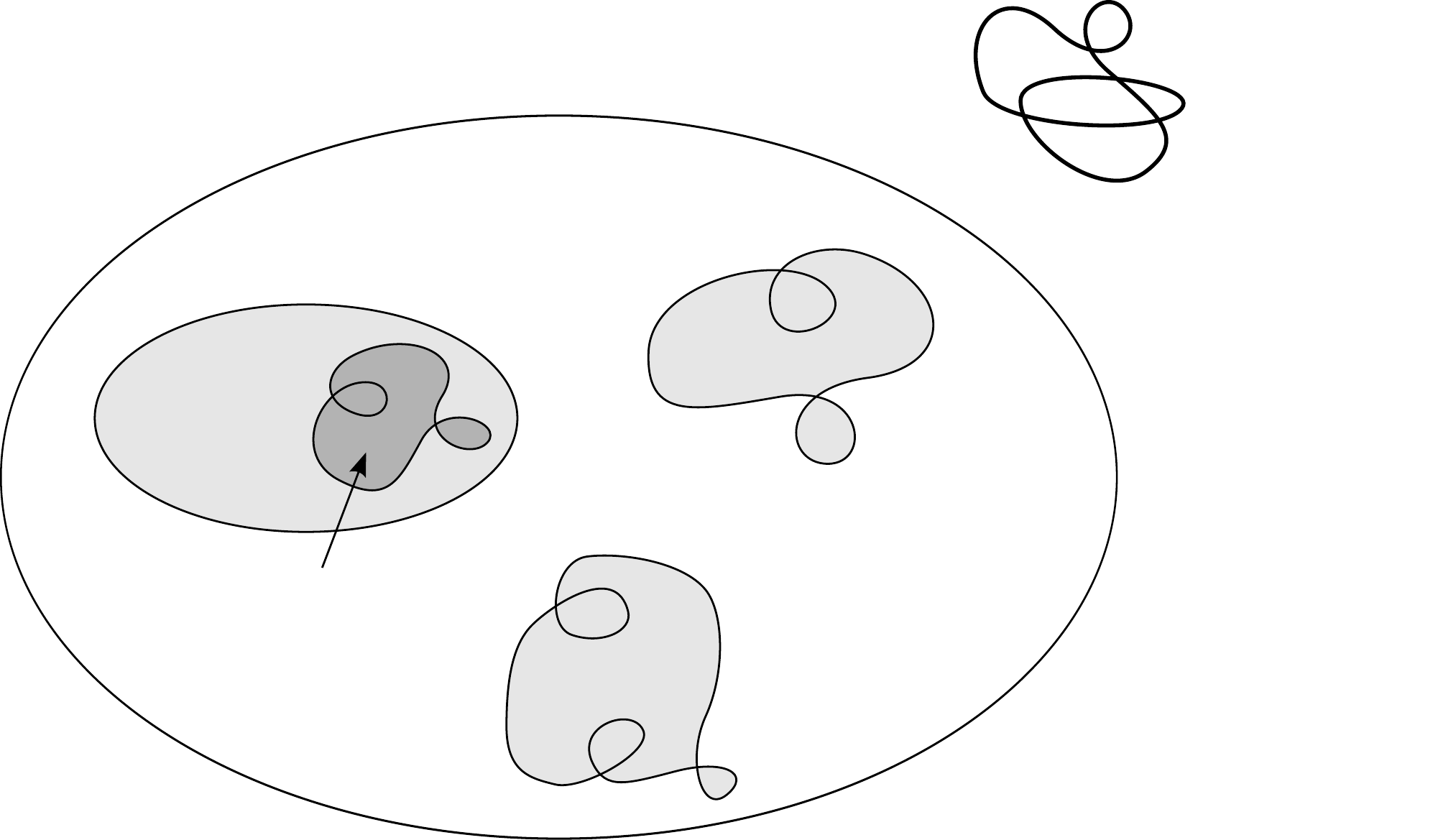}}%
    \put(0.09705251,0.45660485){\color[rgb]{0,0,0}\makebox(0,0)[lb]{\smash{\small $\sigma_0$}}}%
    \put(0.18426357,0.38096739){\color[rgb]{0,0,0}\makebox(0,0)[lb]{\smash{\small $\sigma_1$}}}%
    \put(0.08085618,0.29561944){\color[rgb]{0,0,0}\makebox(0,0)[lb]{\smash{\tiny $K(\sigma_1)$}}}%
    \put(0.50694476,0.1320606){\color[rgb]{0,0,0}\makebox(0,0)[lb]{\smash{\tiny $K(\sigma_2)$}}}%
    \put(0.17678845,0.15590691){\color[rgb]{0,0,0}\makebox(0,0)[lb]{\smash{\tiny $K(\sigma_3)$}}}%
    \put(0.45337226,0.42163515){\color[rgb]{0,0,0}\makebox(0,0)[lb]{\smash{\tiny $K(\sigma_k)$}}}%
    \put(0.76011864,0.42356616){\color[rgb]{0,0,0}\makebox(0,0)[lb]{\smash{\small  $\sigma_{k+1}$}}}%
    \put(0.36445252,0.3071855){\color[rgb]{0,0,0}\rotatebox{17.33948562}{\makebox(0,0)[lb]{\smash{$\dots$}}}}%
  \end{picture}%
\endgroup%
\hfil
\def\svgwidth{0.45\textwidth}
\begingroup%
  \makeatletter%
  \providecommand\color[2][]{%
    \errmessage{(Inkscape) Color is used for the text in Inkscape, but the package 'color.sty' is not loaded}%
    \renewcommand\color[2][]{}%
  }%
  \providecommand\transparent[1]{%
    \errmessage{(Inkscape) Transparency is used (non-zero) for the text in Inkscape, but the package 'transparent.sty' is not loaded}%
    \renewcommand\transparent[1]{}%
  }%
  \providecommand\rotatebox[2]{#2}%
  \ifx\svgwidth\undefined%
    \setlength{\unitlength}{599.40039597bp}%
    \ifx\svgscale\undefined%
      \relax%
    \else%
      \setlength{\unitlength}{\unitlength * \real{\svgscale}}%
    \fi%
  \else%
    \setlength{\unitlength}{\svgwidth}%
  \fi%
  \global\let\svgwidth\undefined%
  \global\let\svgscale\undefined%
  \makeatother%
  \begin{picture}(1,0.57897496)%
    \put(0,0){\includegraphics[width=\unitlength]{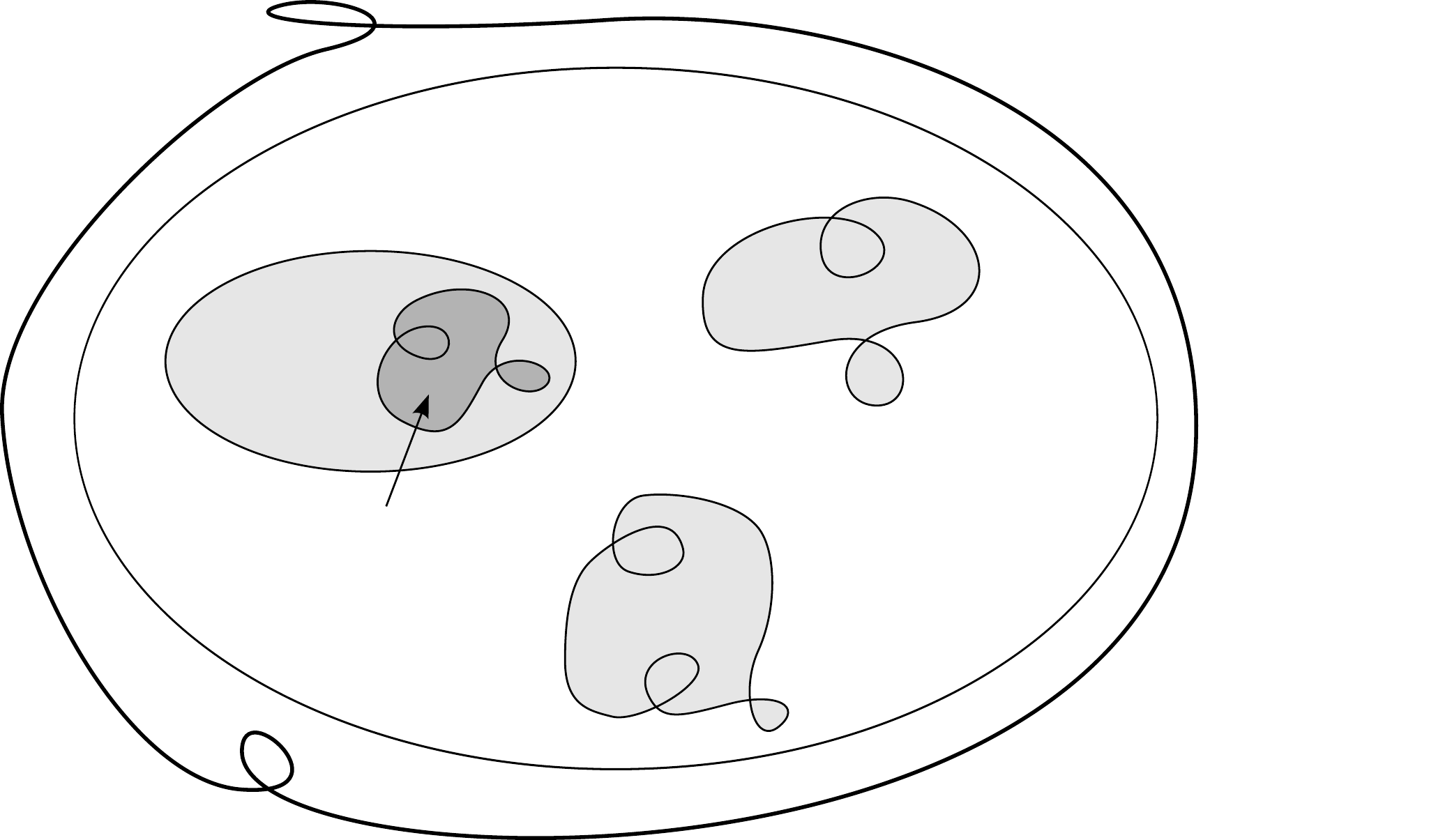}}%
    \put(0.34655516,0.49828593){\color[rgb]{0,0,0}\makebox(0,0)[lb]{\smash{\small $\sigma_0$}}}%
    \put(0.22774589,0.41418645){\color[rgb]{0,0,0}\makebox(0,0)[lb]{\smash{\small $\sigma_1$}}}%
    \put(0.12836235,0.33215962){\color[rgb]{0,0,0}\makebox(0,0)[lb]{\smash{\tiny $K(\sigma_1)$}}}%
    \put(0.53787072,0.17496528){\color[rgb]{0,0,0}\makebox(0,0)[lb]{\smash{\tiny $K(\sigma_2)$}}}%
    \put(0.22056163,0.20211854){\color[rgb]{0,0,0}\makebox(0,0)[lb]{\smash{\tiny $K(\sigma_3)$}}}%
    \put(0.48638286,0.45327173){\color[rgb]{0,0,0}\makebox(0,0)[lb]{\smash{\tiny $K(\sigma_k)$}}}%
    \put(0.67607674,0.53373398){\color[rgb]{0,0,0}\makebox(0,0)[lb]{\smash{\small $\sigma_{k+1}$}}}%
    \put(0.41700982,0.33293158){\color[rgb]{0,0,0}\rotatebox{15.16635796}{\makebox(0,0)[lb]{\smash{$\dots$}}}}%
  \end{picture}%
\endgroup%
\caption{ Two possible relative positions of $\sigma_k=f^{k}(\sigma_0)$ and $\sigma_0$ under the condition 
\eqref{eq:f^k}. In both cases, $\sigma_{k+1} \subset \ext(\sigma_0)$.}
\label{fig:proof_Lemma_map_in_2}
\end{figure}

\subsection*{Step 5.} We show
\begin{equation}\label{eq:f^k-1}
f(K(\sigma_k)) \subset \ext(\sigma_0).
\end{equation}
To see it, suppose otherwise, i.e.~$f(K(\sigma_k)) \not\subset \ext(\sigma_0)$
(see Figure~\ref{fig:proof_Lemma_map_in_3}). 
\begin{figure}[hbt!]
\def\svgwidth{0.45\textwidth}
\begingroup%
  \makeatletter%
  \providecommand\color[2][]{%
    \errmessage{(Inkscape) Color is used for the text in Inkscape, but the package 'color.sty' is not loaded}%
    \renewcommand\color[2][]{}%
  }%
  \providecommand\transparent[1]{%
    \errmessage{(Inkscape) Transparency is used (non-zero) for the text in Inkscape, but the package 'transparent.sty' is not loaded}%
    \renewcommand\transparent[1]{}%
  }%
  \providecommand\rotatebox[2]{#2}%
  \ifx\svgwidth\undefined%
    \setlength{\unitlength}{602.93344594bp}%
    \ifx\svgscale\undefined%
      \relax%
    \else%
      \setlength{\unitlength}{\unitlength * \real{\svgscale}}%
    \fi%
  \else%
    \setlength{\unitlength}{\svgwidth}%
  \fi%
  \global\let\svgwidth\undefined%
  \global\let\svgscale\undefined%
  \makeatother%
  \begin{picture}(1,0.58253966)%
    \put(0,0){\includegraphics[width=\unitlength]{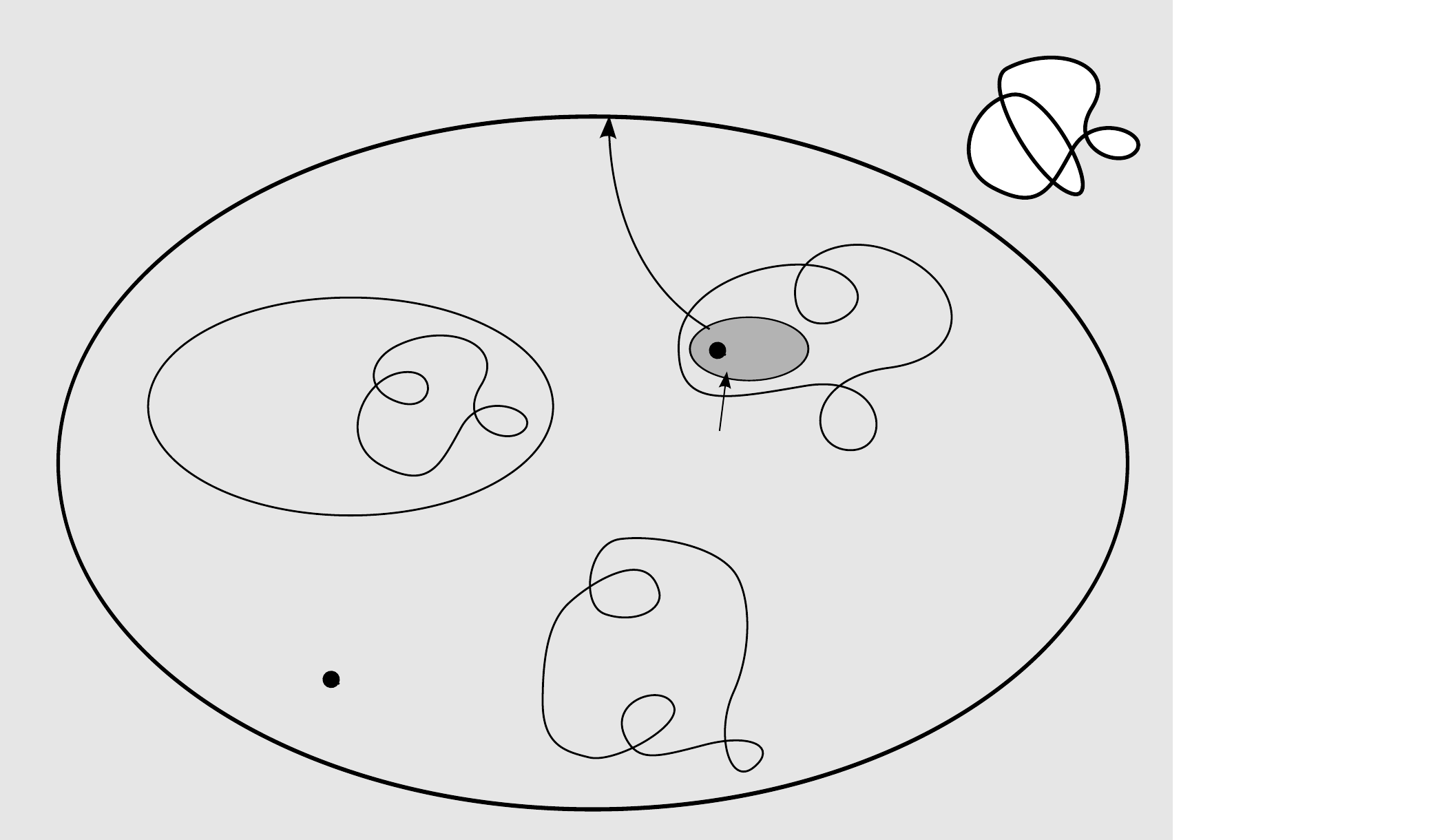}}%
    \put(0.11657145,0.45124969){\color[rgb]{0,0,0}\makebox(0,0)[lb]{\smash{\small $\sigma_0$}}}%
    \put(0.21581978,0.38428888){\color[rgb]{0,0,0}\makebox(0,0)[lb]{\smash{\small $\sigma_1$}}}%
    \put(0.4729412,0.42314513){\color[rgb]{0,0,0}\makebox(0,0)[lb]{\smash{\tiny $K(\sigma_k)$}}}%
    \put(0.75061919,0.41647502){\color[rgb]{0,0,0}\makebox(0,0)[lb]{\smash{\small $\sigma_{k+1}$}}}%
    \put(0.51005108,0.33268698){\color[rgb]{0,0,0}\makebox(0,0)[lb]{\smash{\tiny $z_0$}}}%
    \put(0.24067661,0.10570131){\color[rgb]{0,0,0}\makebox(0,0)[lb]{\smash{\tiny $f(z_0)$}}}%
    \put(0.64575132,0.15787319){\color[rgb]{0,0,0}\makebox(0,0)[lb]{\smash{$D_1$}}}%
    \put(0.47165889,0.24614621){\color[rgb]{0,0,0}\makebox(0,0)[lb]{\smash{$\Omega_1$}}}%
    \put(0.38737081,0.30212469){\color[rgb]{0,0,0}\rotatebox{18.68561467}{\makebox(0,0)[lb]{\smash{$\dots$}}}}%
  \end{picture}%
\endgroup%
\hfil
\def\svgwidth{0.45\textwidth}
\begingroup%
  \makeatletter%
  \providecommand\color[2][]{%
    \errmessage{(Inkscape) Color is used for the text in Inkscape, but the package 'color.sty' is not loaded}%
    \renewcommand\color[2][]{}%
  }%
  \providecommand\transparent[1]{%
    \errmessage{(Inkscape) Transparency is used (non-zero) for the text in Inkscape, but the package 'transparent.sty' is not loaded}%
    \renewcommand\transparent[1]{}%
  }%
  \providecommand\rotatebox[2]{#2}%
  \ifx\svgwidth\undefined%
    \setlength{\unitlength}{560.89567264bp}%
    \ifx\svgscale\undefined%
      \relax%
    \else%
      \setlength{\unitlength}{\unitlength * \real{\svgscale}}%
    \fi%
  \else%
    \setlength{\unitlength}{\svgwidth}%
  \fi%
  \global\let\svgwidth\undefined%
  \global\let\svgscale\undefined%
  \makeatother%
  \begin{picture}(1,0.6187208)%
    \put(0,0){\includegraphics[width=\unitlength]{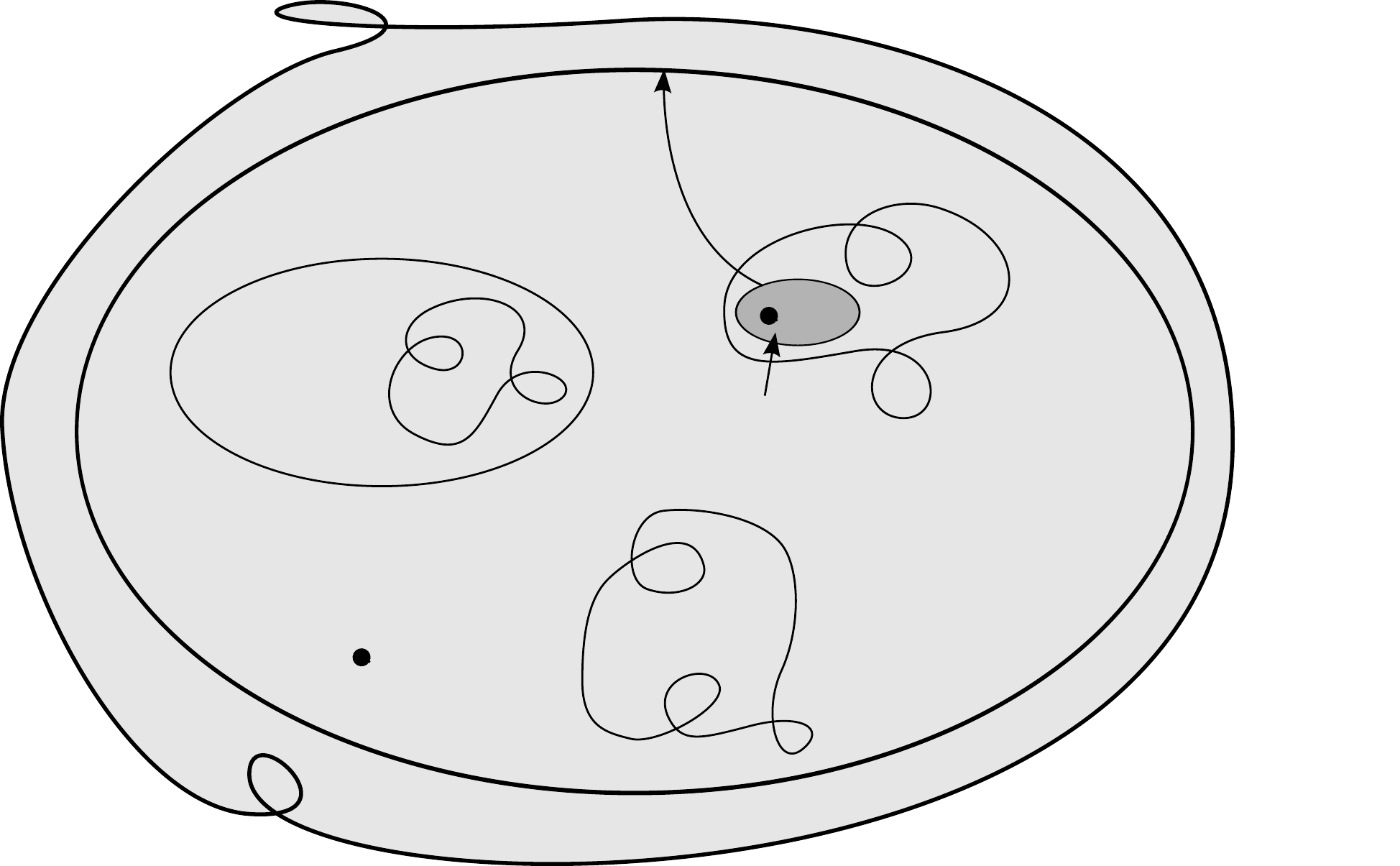}}%
    \put(0.23093253,0.49038924){\color[rgb]{0,0,0}\makebox(0,0)[lb]{\smash{\small $\sigma_0$}}}%
    \put(0.12947473,0.34413889){\color[rgb]{0,0,0}\makebox(0,0)[lb]{\smash{\small $\sigma_1$}}}%
    \put(0.51977239,0.48438822){\color[rgb]{0,0,0}\makebox(0,0)[lb]{\smash{\tiny $K(\sigma_k)$}}}%
    \put(0.76742635,0.53138091){\color[rgb]{0,0,0}\makebox(0,0)[lb]{\smash{\small $\sigma_{k+1}$}}}%
    \put(0.55966357,0.38715046){\color[rgb]{0,0,0}\makebox(0,0)[lb]{\smash{\tiny $z_0$}}}%
    \put(0.27010012,0.14315276){\color[rgb]{0,0,0}\makebox(0,0)[lb]{\smash{\tiny $f(z_0)$}}}%
    \put(0.69663582,0.18712473){\color[rgb]{0,0,0}\makebox(0,0)[lb]{\smash{$D_1$}}}%
    \put(0.51839397,0.29412366){\color[rgb]{0,0,0}\makebox(0,0)[lb]{\smash{$\Omega_1$}}}%
    \put(0.43171249,0.35838661){\color[rgb]{0,0,0}\rotatebox{19.82529035}{\makebox(0,0)[lb]{\smash{$\dots$}}}}%
  \end{picture}%
\endgroup%
\caption{Sketch of Step 5.}
\label{fig:proof_Lemma_map_in_3}
\end{figure}
Then there
exists $z_0 \in K(\sigma_k)$ such that $f(z_0) \in K(\sigma_0)$. 
By \eqref{eq:f^k}, we have 
\[
z_0 \in \Omega_1 \subset \overline{\Omega_1} \subset K(\sigma_k)
\]
for some bounded simply connected component $\Omega_1$ of $\chat
\setminus \sigma_k$. We have $\bd \Omega_1 \subset \sigma_k$, so
$f(\bd \Omega_1) \subset \sigma_{k+1}$, which together with \eqref{eq:f^k}
implies $\overline{\Omega_1} \subset K(\sigma_0) \subset D_1$ for some component
$D_1$ of $\chat \setminus f(\bd \Omega_1)$. Moreover, $z_0 \in \Omega_1$ and
$f(z_0) \in K(\sigma_0) \subset D_1$. Hence, the assumptions of Lemma~\ref{mapout} are satisfied for
$\Omega_1, D_1, z_0$, so $f$ has a weakly repelling fixed point in
$\Omega_1$, which is contained in $\Omega$ by \eqref{eq:disjoint'}. This makes a contradiction. Therefore, \eqref{eq:f^k-1} is satisfied.

\subsection*{\bf Step 6.} We check that we are under the assumptions of Shishikura's Theorem~\ref{shishikura}. Let 
\[
V_0 = \ext(\sigma_0), \qquad V_1 = \inter(\sigma_1),
\]
and let us check that $V_0, V_1$ satisfy the required assumptions. By definition, $V_0, V_1$ are simply connected and $f(\bd V_0) = \bd
V_1$. Since $f$ is a covering in some neighbourhood $N$ of $\sigma_0 = \bd V_0$,
we have
\[
f(V_0 \cap N) = f(N \setminus \overline{D'}) \subset \C \setminus
\overline{D} = V_1.
\]
By \eqref{eq:f^k}, $K(f^j(\bd V_1)) \subset \C \setminus
\overline{V_0}$ for $j = 0, \ldots, k-1$ and $f^k(\bd V_1) \subset V_0$. Moreover, by \eqref{eq:no_poles} and \eqref{eq:f^k-1}, the
map $f^k$ is defined on $\overline{V_1}$ and
\[
f^j(\overline{V_1}) \subset K(f^j(\bd V_1)) \subset \C \setminus \overline{V_0}
\text{ for } j = 0, \ldots, k-1 \quad \text{and} \quad f^k(\overline{V_1}) \subset V_0.
\]
See Figure \ref{fig:proof_Lemma_map_in_3-2}. 
\begin{figure}[hbt!]
\def\svgwidth{0.45\textwidth}
\begingroup%
  \makeatletter%
  \providecommand\color[2][]{%
    \errmessage{(Inkscape) Color is used for the text in Inkscape, but the package 'color.sty' is not loaded}%
    \renewcommand\color[2][]{}%
  }%
  \providecommand\transparent[1]{%
    \errmessage{(Inkscape) Transparency is used (non-zero) for the text in Inkscape, but the package 'transparent.sty' is not loaded}%
    \renewcommand\transparent[1]{}%
  }%
  \providecommand\rotatebox[2]{#2}%
  \ifx\svgwidth\undefined%
    \setlength{\unitlength}{547.81668389bp}%
    \ifx\svgscale\undefined%
      \relax%
    \else%
      \setlength{\unitlength}{\unitlength * \real{\svgscale}}%
    \fi%
  \else%
    \setlength{\unitlength}{\svgwidth}%
  \fi%
  \global\let\svgwidth\undefined%
  \global\let\svgscale\undefined%
  \makeatother%
  \begin{picture}(1,0.55089575)%
    \put(0,0){\includegraphics[width=\unitlength]{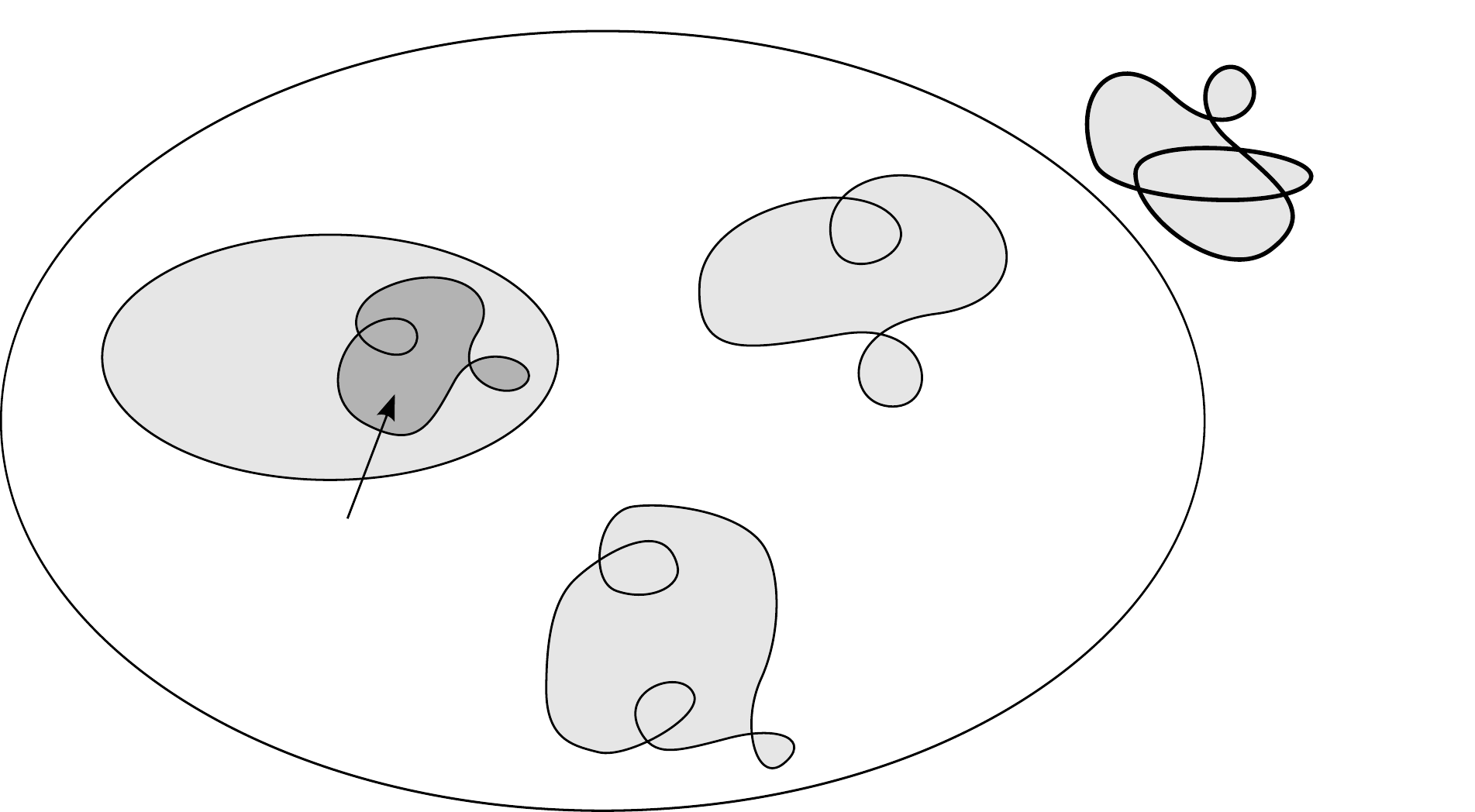}}%
    \put(0.10205903,0.48015911){\color[rgb]{0,0,0}\makebox(0,0)[lb]{\smash{\small $\sigma_0$}}}%
    \put(0.19376893,0.40061984){\color[rgb]{0,0,0}\makebox(0,0)[lb]{\smash{\small $\sigma_1$}}}%
    \put(0.09963066,0.31086917){\color[rgb]{0,0,0}\makebox(0,0)[lb]{\smash{$V_1$}}}%
    \put(0.53309584,0.13887304){\color[rgb]{0,0,0}\makebox(0,0)[lb]{\smash{\tiny $K(\sigma_2)$}}}%
    \put(0.18590814,0.16600658){\color[rgb]{0,0,0}\makebox(0,0)[lb]{\smash{\tiny $K(\sigma_3)$}}}%
    \put(0.47675977,0.44338548){\color[rgb]{0,0,0}\makebox(0,0)[lb]{\smash{\tiny $K(\sigma_k)$}}}%
    \put(0.63075566,0.52467089){\color[rgb]{0,0,0}\makebox(0,0)[lb]{\smash{\tiny  $K(\sigma_{k+1})\supset f^k(V_1)$}}}%
    \put(0.3898976,0.31708409){\color[rgb]{0,0,0}\rotatebox{16.68976894}{\makebox(0,0)[lb]{\smash{$\dots$}}}}%
  \end{picture}%
\endgroup%
\hfil
\def\svgwidth{0.45\textwidth}
\begingroup%
  \makeatletter%
  \providecommand\color[2][]{%
    \errmessage{(Inkscape) Color is used for the text in Inkscape, but the package 'color.sty' is not loaded}%
    \renewcommand\color[2][]{}%
  }%
  \providecommand\transparent[1]{%
    \errmessage{(Inkscape) Transparency is used (non-zero) for the text in Inkscape, but the package 'transparent.sty' is not loaded}%
    \renewcommand\transparent[1]{}%
  }%
  \providecommand\rotatebox[2]{#2}%
  \ifx\svgwidth\undefined%
    \setlength{\unitlength}{534.6996582bp}%
    \ifx\svgscale\undefined%
      \relax%
    \else%
      \setlength{\unitlength}{\unitlength * \real{\svgscale}}%
    \fi%
  \else%
    \setlength{\unitlength}{\svgwidth}%
  \fi%
  \global\let\svgwidth\undefined%
  \global\let\svgscale\undefined%
  \makeatother%
  \begin{picture}(1,0.69664433)%
    \put(0,0){\includegraphics[width=\unitlength]{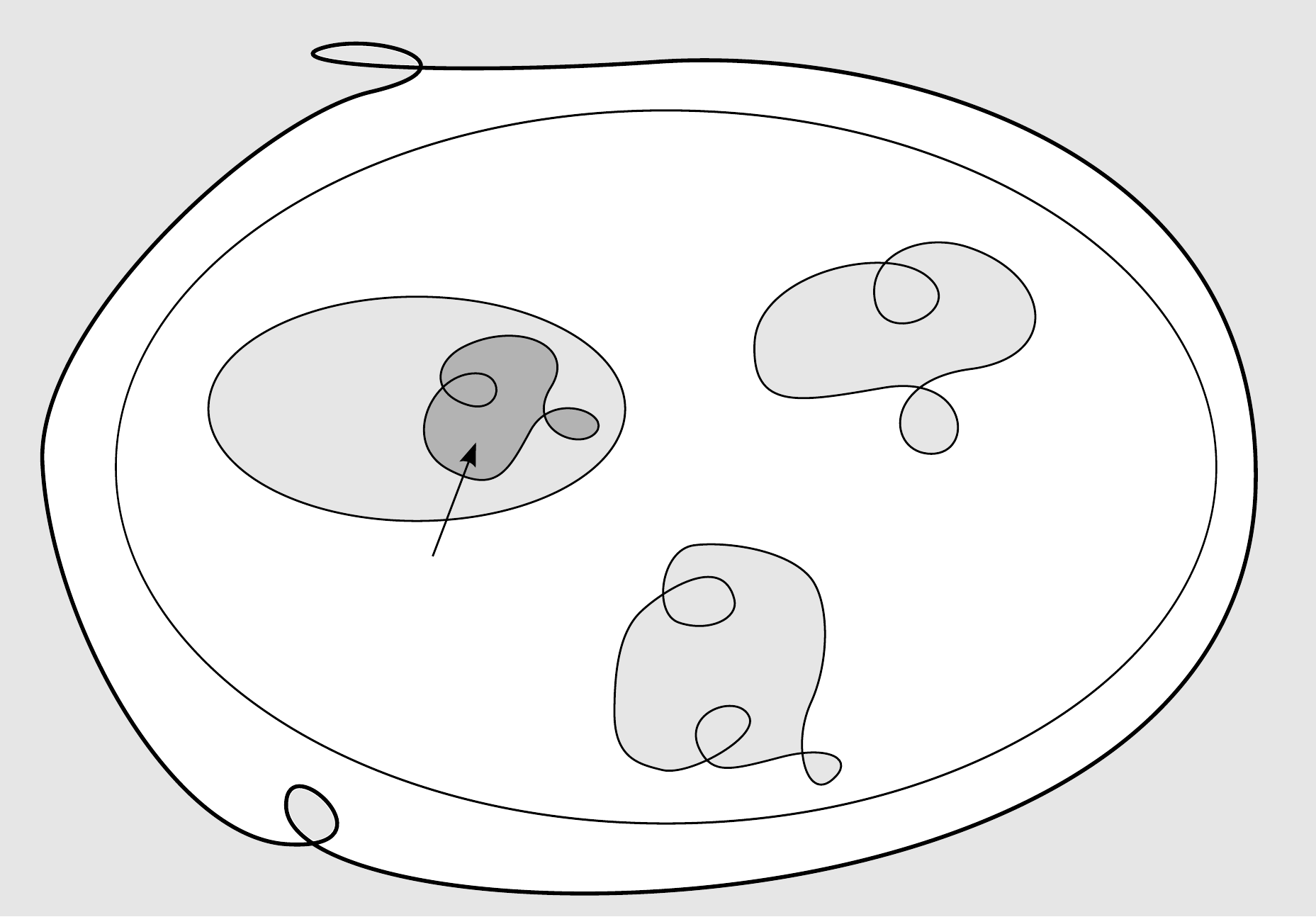}}%
    \put(0.38870249,0.57130799){\color[rgb]{0,0,0}\makebox(0,0)[lb]{\smash{\small $\sigma_0$}}}%
    \put(0.14551229,0.2948649){\color[rgb]{0,0,0}\makebox(0,0)[lb]{\smash{\small $\sigma_1$}}}%
    \put(0.18971535,0.37779652){\color[rgb]{0,0,0}\makebox(0,0)[lb]{\smash{$V_1$}}}%
    \put(0.63355714,0.21218078){\color[rgb]{0,0,0}\makebox(0,0)[lb]{\smash{\tiny $K(\sigma_2)$}}}%
    \put(0.27785238,0.24419501){\color[rgb]{0,0,0}\makebox(0,0)[lb]{\smash{\tiny $K(\sigma_3)$}}}%
    \put(0.57583905,0.52416339){\color[rgb]{0,0,0}\makebox(0,0)[lb]{\smash{\tiny $K(\sigma_k)$}}}%
    \put(0.77209022,0.6254906){\color[rgb]{0,0,0}\makebox(0,0)[lb]{\smash{\tiny $\supset f^k(V_1)$}}}%
    \put(0.4809968,0.40254202){\color[rgb]{0,0,0}\rotatebox{15.36194887}{\makebox(0,0)[lb]{\smash{$\dots$}}}}%
  \end{picture}%
\endgroup%
\caption{ Sketch of Step 6.}
\label{fig:proof_Lemma_map_in_3-2}
\end{figure}
Hence, Shishikura's Theorem~\ref{shishikura} concludes that $f$ has a weakly repelling point in $\clC \setminus\overline{V_0} = \inter(\sigma_0) \subset \Omega$, which finishes the proof.
\end{proof}


\section{Proof of Theorem B}\label{sec:proof_thmB}

Let $f: \C \to \clC$ be a transcendental meromorphic map and let $U_0, \ldots,
U_{p-1}$ be a periodic cycle of Baker domains of $f$ of
(minimal) period $p \ge 1$. Recall that for $j = 0, \ldots, p-1$ we have $f^{pn}
\to \zeta_j$
locally uniformly on $U_j$ as $n \to \infty$ for some $\zeta_j \in \clC$ such that
$\zeta_j = \infty$ for at least one $j$. Renumbering the Baker domains, we may
assume $\zeta_0 = \infty$, i.e.~the domain $U_0$ is unbounded and
\[
f^{pn}(z) \to \infty \quad \text{for } z\in U_0 \quad \text{as } n \to \infty. 
\]

As the first step in the proof of Theorem~B we show a technical lemma which allows us to discard some of the possible configurations of the $U_j$'s. 
More precisely, we show that under certain relative positions of the $U_j$'s the existence of a weakly repelling fixed point follows directly from the results in Section~\ref{section:configurations}.

\begin{lem}[\bf{Configurations of Baker domains}]\label{lem:U_0}
Suppose there exist a simply connected bounded domain
$\Omega\subset \C$ and a pole $p_0$ of $f$, such that
\[
p_0 \in \Omega \quad \text{and} \quad \bd\Omega \subset U_j
\]
for some $j = 0, \ldots, p-1$. Then either $f$ has a weakly
repelling fixed point or there exist $n \ge 0$ and a bounded simply connected
domain $\Omega_0\subset \C$, such that
\[
p_0 \in \Omega_0 \quad \text{and} \quad \bd\Omega_0 \subset f^n(\bd\Omega)\subset U_0.
\]
\end{lem}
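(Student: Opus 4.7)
The plan is to iterate $\bd \Omega$ under $f$ and track how $p_0$ sits relative to the images. Write $\gamma_k := f^k(\bd\Omega)$, which is a continuum in $U_{(j+k) \bmod p}$ (using that $\bd\Omega$ is connected for the bounded simply connected domain $\Omega$). If $j = 0$ the conclusion is immediate with $n = 0$ and $\Omega_0 = \Omega$, so I assume $j \in \{1,\ldots,p-1\}$ (so $p \geq 2$). Note that $\gamma_k \cap \gamma_{k'} = \emptyset$ whenever $k \not\equiv k' \pmod p$, since these lie in distinct Fatou components.

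I construct inductively bounded simply connected domains $\Omega^{(k)} \ni p_0$ with $\bd\Omega^{(k)} \subset \gamma_k$, starting from $\Omega^{(0)} = \Omega$. At each step, provided $p_0 \in K(\gamma_{k+1})$, I let $\Omega^{(k+1)}$ be the bounded component of $\C \setminus \gamma_{k+1}$ containing $p_0$; this is simply connected because $\gamma_{k+1}$ is a continuum. If the induction succeeds through $k_0 := p - j$, then $\gamma_{k_0} \subset U_0$ and $\Omega_0 := \Omega^{(k_0)}$ satisfies the required conclusion.

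The main content is to show that if the induction fails, i.e.\ $p_0 \in K(\gamma_k)$ but $p_0 \notin K(\gamma_{k+1})$ for some $k$, then $f$ has a weakly repelling fixed point. Since $\gamma_{k+1}$ is connected and disjoint from $\gamma_k$, it lies in a single component of $\C \setminus \gamma_k$, splitting the analysis into three cases.
\emph{(i)} $\gamma_{k+1} \subset \ext(\gamma_k)$. Then $K(\gamma_k) \cap \gamma_{k+1} = \emptyset$, so the continuum $K(\gamma_k)$ lies in one component of $\chat \setminus \gamma_{k+1}$; as $p_0 \in K(\gamma_k) \cap \ext(\gamma_{k+1})$, that component is $\ext(\gamma_{k+1})$, and Corollary~\ref{cor:mapout1} applied to $X = \gamma_k$ produces a weakly repelling fixed point.
\emph{(ii)} $\gamma_{k+1}$ lies in a bounded component $B \neq \Omega^{(k)}$ of $\C \setminus \gamma_k$. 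Then $\overline{\Omega^{(k)}} \cap \gamma_{k+1} = \emptyset$ (since $\Omega^{(k)}$ and $B$ are disjoint components and $\bd \Omega^{(k)} \subset \gamma_k$ avoids $B$). I apply Lemma~\ref{mapout} with $\Omega = \Omega^{(k)}$, $z_0 = p_0$, and $D$ the component of $\chat \setminus f(\bd\Omega^{(k)})$ containing $\infty$; the inclusion $\overline{\Omega^{(k)}} \subset D$ follows by joining $\overline{\Omega^{(k)}}$ to $\infty$ through $\ext(\gamma_k)$, which is disjoint from $B \supset \gamma_{k+1}$, and $f(z_0) = \infty \in D$ gives the remaining hypothesis.
\emph{(iii)} $\gamma_{k+1} \subset \Omega^{(k)}$. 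Then $f(\bd\Omega^{(k)}) \subset \Omega^{(k)}$, setting up Proposition~\ref{mapin} with $\Omega^* = \Omega^{(k)}$. A suitable exponent $m \geq 2$ exists because the sub-sequence of iterates $\gamma_{k+j}$ landing in $U_0$ escapes to infinity while $\Omega^{(k)}$ is bounded. For the minimal such $m$, when $m \not\equiv 0 \pmod p$ the disjointness $\gamma_{k+m} \cap \gamma_k = \emptyset$ together with connectedness of $\gamma_{k+m}$ forces $\gamma_{k+m} \cap \overline{\Omega^{(k)}} = \emptyset$, as required by Proposition~\ref{mapin}.

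The main technical obstacle is the residual edge case in (iii) where the minimal $m$ with $\gamma_{k+m} \not\subset \overline{\Omega^{(k)}}$ is a multiple of $p$, so that $\gamma_{k+m}$ and $\bd\Omega^{(k)} \subset \gamma_k$ lie in the same Fatou component and may intersect. I would expect to handle this either by refining $\Omega^{(k)}$ to a smaller component of $\Omega^{(k)} \setminus \gamma_{k+1}$, or by combining Proposition~\ref{mapin} with a direct application of Shishikura's Theorem~\ref{shishikura} in the spirit of Step~6 of the proof of Proposition~\ref{mapin}.
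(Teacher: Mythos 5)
Your overall strategy---following the pole $p_0$ through the images $\gamma_k=f^k(\bd\Omega)$ and, at the first failure of $p_0\in K(\gamma_{k+1})$, invoking Corollary~\ref{cor:mapout1}, Lemma~\ref{mapout} or Proposition~\ref{mapin} according to the position of the continuum $\gamma_{k+1}$ relative to $\gamma_k$---is essentially the paper's argument; the paper differs only in its stopping rule (it takes the minimal $n$ with $K(f^n(\bd\Omega))\not\subset K(f^{n+1}(\bd\Omega))$, then works with the bounded component $\Omega_0$ of $\C\setminus f^n(\bd\Omega)$ containing $p_0$) and in that, when all of $f(\bd\Omega_0),\ldots,f^{p-1}(\bd\Omega_0)$ stay inside $\Omega_0$, it concludes $\bd\Omega_0\subset U_0$, i.e.\ the second alternative of the lemma, rather than a contradiction. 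Your cases (i) and (ii) are correct as sketched. The genuine gap is the ``residual edge case'' of (iii), which you leave open: as written the proof is incomplete there, and neither of your two suggested fixes is worked out (the Shishikura-type route is exactly what Proposition~\ref{mapin} already packages, and its hypothesis $f^m(\bd\Omega^{(k)})\cap\overline{\Omega^{(k)}}=\emptyset$ is precisely what is missing in that case, while ``refining $\Omega^{(k)}$'' is not developed at all).

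In fact the edge case cannot occur, and the observation that rules it out is exactly the key step at the end of the paper's proof. At a failure step you have $k<k_0=p-j$, so $\bd\Omega^{(k)}\subset\gamma_k\subset U_{j+k}$ with $1\le j+k\le p-1$; hence $U_0$ is disjoint from $\bd\Omega^{(k)}$, and being connected and unbounded it lies in the unbounded component of $\C\setminus\bd\Omega^{(k)}$, so $U_0\cap\overline{\Omega^{(k)}}=\emptyset$. If the minimal escape time $m$ were a multiple of $p$, then $m\ge p$, and in particular $f^{i^*}(\bd\Omega^{(k)})\subset\overline{\Omega^{(k)}}$ for $i^*=p-(j+k)\in\{1,\ldots,p-1\}\subset\{1,\ldots,m-1\}$; but $f^{i^*}(\bd\Omega^{(k)})\subset U_0$, a contradiction. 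Therefore $2\le m\le p-1$, so $f^m(\bd\Omega^{(k)})$ lies in a Fatou component different from the one containing $\bd\Omega^{(k)}$, and your disjointness-plus-connectedness argument yields $f^m(\bd\Omega^{(k)})\cap\overline{\Omega^{(k)}}=\emptyset$, so Proposition~\ref{mapin} applies. (The paper uses the same connected-and-unbounded-$U_0$ argument, but since its $\Omega_0$ may already have boundary in $U_0$ it obtains the lemma's second alternative instead of a contradiction.) With this observation inserted, your proof closes and coincides in substance with the paper's.
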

\begin{proof} If $p = 1$ then we can take $n = 0$ and $\Omega_0 = \Omega$. Hence, in what follows we assume $p > 1$. 

Since $p>1$, it is clear that $\bd\Omega, f(\bd\Omega),
\ldots, f^{p-1}(\bd\Omega)$ are pairwise disjoint and we cannot have
\[
K(\bd\Omega) \subset K(f(\bd\Omega)) \subset
\cdots \subset K(f^p(\bd\Omega)),
\]
because it would contradict the connectedness of $U_j$. Thus, there is a minimal $n \ge 0$ such that 
\begin{equation}\label{eq:not_in_K}
K(f^n(\bd\Omega)) \not\subset K(f^{n+1}(\bd\Omega)).
\end{equation}
Note that we have $p_0 \in K(f^n(\bd\Omega)) \setminus
f^n(\bd\Omega)$. Hence, there exists a bounded component $\Omega_0$ of
$\clC \setminus f^n(\bd\Omega)$, such that $p_0 \in \Omega_0$. Since $\Omega$
is simply connected, $\Omega_0$ is also simply connected. 

As $\bd\Omega_0 \cap f(\bd\Omega_0) = \emptyset$, one of the
three possibilities holds: $\overline{\Omega_0} \subset K(
f(\bd\Omega_0))$, $\overline{\Omega_0}
\subset \ext(f(\bd \Omega_0))$ or $f(\bd\Omega_0) \subset \Omega_0$. Since
$\bd\Omega_0 \subset f^n(\bd\Omega)$ and $f(\bd\Omega_0) \cap f^n(\bd\Omega) = \emptyset$, the first possibility does not occur by \eqref{eq:not_in_K}.
If the second possibility holds, then the assumptions of
Corollary~\ref{cor:mapout1} are satisfied for $X = \bd \Omega_0$, so $f$ has a weakly repelling fixed point. Hence, we are left with the third possibility, i.e.~$f(\bd\Omega_0) \subset \Omega_0$.

Note that $\bd\Omega_0, f(\bd\Omega_0),
\ldots, f^{p-1}(\bd\Omega_0)$ are pairwise disjoint. Therefore, if there exists a (minimal)
number $2 \leq m \leq p-1$ such that $f^m(\bd\Omega_0) \not\subset \Omega_0$,
then $f(\bd\Omega_0), \ldots, f^{m-1}(\bd\Omega_0) \subset \Omega_0$ and
$f^m(\bd\Omega_0) \cap \overline{\Omega_0} = \emptyset$, so the assumptions of
Proposition~\ref{mapin} are fulfilled for $\Omega_0$ and we conclude that $f$ has a weakly
repelling fixed point in that case. Thus, we can assume $f(\bd\Omega_0), \ldots,
f^{p-1}(\bd\Omega_0) \subset \Omega_0$. This implies $\bd\Omega_0 \subset U_0$,
because otherwise $\bd\Omega_0 \cap U_0 = \emptyset$ and one of the sets
$f(\bd\Omega_0), \ldots, f^{p-1}(\bd\Omega_0)$ is contained in $U_0$, which
contradicts the fact that $U_0$ is connected and unbounded. Hence, $\Omega_0$ satisfies the assertion of the lemma.
\end{proof}

Let $W \subset U_0$ be an absorbing domain which
exists according to Corollary~A' (for the map $F = f^p$ and $\zeta =\infty$). Note that $W$ is unbounded and does not contain poles of $f$. 
The proof of Theorem~B splits into two cases depending on the connectivity of $W$.

\subsection*{\bf Case 1. $\boldsymbol{W}$ is not simply connected.}

\

Under this assumption we can take a closed curve
\[
\gamma \subset W,
\]
such that $K(\gamma) \cap J(f) \neq \emptyset$. Notice that, because of Corollary~A', $f^{\ell p}(\gamma)\subset W$ for all $\ell\geq 0$.

By Lemma~\ref{poles-in-holes}, there exists $n_0 \ge 0$ and a
pole $p_0$ of $f$, such that $p_0 \in K(f^{n_0}(\gamma))$. Then $p_0$ is in a
bounded simply connected component $\Omega$ of $\clC\setminus f^{n_0}(\gamma)$, such that $\bd\Omega \subset f^{n_0}(W)$.
By Lemma~\ref{lem:U_0}, we may reduce the proof to the case when there exists a bounded
simply connected domain $\Omega_0$ with 
\[
\bd\Omega_0 \subset f^{n_1}(\bd\Omega) \subset f^{n_1}(\gamma) \subset U_0 \cap f^{n_0+n_1}(W) 
\]
for some $n_1 \ge 0$, such that $p_0 \in \Omega_0$. In particular, this implies that $n_0+n_1 = \ell p$ for some $\ell \ge 0$, so by Corollary~A' we have $f^{n_0+n_1}(W) \subset W$, which implies $\bd \Omega_0 \subset W$. We conclude that
there exists a bounded component $\Omega_1$ of $\C \setminus
\overline{W}$, such that $p_0 \in \Omega_1$. Since $W$ is connected we know that 
$\Omega_1$ is simply connected. We claim that
\begin{equation}\label{eq:Omega_disjoint}
\bd\Omega_1, f(\bd\Omega_1), \ldots, f^p(\bd\Omega_1) \quad \text{are pairwise
disjoint}.
\end{equation}
To see the claim it is enough to notice that $\bd\Omega_1, f(\bd\Omega_1), \ldots, f^{p-1}(\bd\Omega_1)$ are in different Fatou
components. Moreover, $\bd\Omega_1 \subset \overline{W}\subset 
U_0$, so by Corollary~A' we get
\begin{equation}\label{eq:bdOmega}
f^p(\bd \Omega_1) \subset f^p(\overline{W}) \subset
f^p(W) \subset \C \setminus \overline{\Omega_1}. 
\end{equation}

Now we proceed like in the proof of Lemma~\ref{lem:U_0}.
By \eqref{eq:Omega_disjoint}, we have
$f(\bd\Omega_1)
\subset \Omega_1$, $\overline{\Omega_1}
\subset \ext(f(\bd \Omega_1))$ or $\overline{\Omega_1} \subset K(
f(\bd\Omega_1))$. 
In the first case, by \eqref{eq:bdOmega}
we have $p > 1$ and there exists $m \in \{2, \ldots, p\}$ such that
$f(\bd\Omega_1), \ldots, f^{m-1}(\bd\Omega_1) \subset \Omega_1$ and
$f^m(\bd\Omega_1)
\cap \overline{\Omega_1} = \emptyset$. Hence, $f$ has a weakly repelling fixed
point by Proposition~\ref{mapin} applied to $\Omega_1$. In the second case we use
Corollary~\ref{cor:mapout1} for $X = \bd \Omega_1$. Thus, we can assume that the third possibility takes place, i.e.
\[
\overline{\Omega_1} \subset K(f(\bd \Omega_1))
\]
Note that this implies 
\[
p = 1,
\]
because if $p > 1$, then $\Omega_1 \subset U_0$ and $f(\bd\Omega_1) \cap U_0 =
\emptyset$, which contradicts the fact that $U_0$ is connected and unbounded.

Let
\[
\NN = \{n \ge 0: p_0 \text{ is contained in a bounded component of } \C \setminus \overline{f^n(W)}\}.
\]
Note that $0 \in \NN$, so $\sup \NN$ is well defined.
We consider two further subcases.

\subsubsection*{Case $($\rm{i}$)$: $\sup\NN = N < \infty$}

\

Then $p_0$ is contained in a bounded
component $\Omega_2$ of $\C \setminus \overline{f^N(W)}$ but is not contained in any bounded component of
$\C \setminus \overline{f^{N+1}(W)}$. Moreover, by Corollary~A' we have
\[
f(\bd \Omega_2) \subset f(\overline{f^N(W)}) = 
f^{N+1}(\overline{W}) \subset
f^N(W) \subset \C \setminus \overline{\Omega_2}.
\]
This implies $\overline{\Omega_2}\subset \ext(f(\bd\Omega_2))$. 
Consequently, the assumptions of Corollary~\ref{cor:mapout1} 
are satisfied for $X = \bd \Omega_2$, and so $f$ has a weakly repelling fixed
point. 

\subsubsection*{Case $($\rm{ii}$)$: $\sup\NN = \infty$}

\

Fix some point $z_0\in\C$, which is not a pole of
$f$. By assumption and Corollary~A', for sufficiently large $n$ there
exists a bounded component $\Omega_3$ of $\C \setminus \overline{f^n(W)}$
containing $p_0, z_0, f(z_0)$, such that
\[
f(\bd \Omega_3) \subset f(\overline{f^n(W)}) = f^{n+1}(\overline{W})
\subset f^n(W) \subset \C \setminus \overline{\Omega_3}. 
\]
Hence,
\[
\overline{\Omega_3} \subset D,
\]
where $D$ is a component of $\clC \setminus f(\bd \Omega_3)$. We have $z_0,
f(z_0) \in \Omega_3 \subset D$. Hence, $\Omega_3, D, z_0$ satisfy the
assumptions of Lemma~\ref{mapout}, so $f$ has a weakly repelling fixed point.
This ends the proof of Theorem~B in Case~1 ($W$ is multiply connected).

\subsection*{\bf Case 2. $\boldsymbol{W}$ is simply connected.}

\

By assumption, one of the domain $U_j$ is multiply connected, so like in the proof in Case~1, using Lemmas~\ref{poles-in-holes} and~\ref{lem:U_0} we can assume that there exists a curve 
\[
\gamma \subset U_0
\]
and a pole $p_0$ of $f$, such that $p_0 \in K(\gamma)$ (the difference with respect to the previous case is that  the curve $\gamma$ was taken in $W$). Let 
\[
\Gamma = \bigcup_{n=0}^\infty f^n(\gamma).
\]
Note that $p_0 \notin \Gamma$ and $f(\Gamma) \subset \bigcup_{n=1}^\infty
f^n(\gamma) \subset
\Gamma$. Moreover, $\Gamma$ is the union of $p$ disjoint sets
\[
\Gamma_j = \bigcup_{n=0}^\infty f^{pn+j}(\gamma) \subset U_j,
\]
for $j \in\{0,\ldots p-1\}$, such that $f(\Gamma_j) \subset \Gamma_{j+1 \text{ mod } p}$ and $f^{pn} \to
\zeta_j$ uniformly on $\Gamma_j$ as $n\to\infty$. In particular, this
implies that $\Gamma_0$ is a closed subset of $\C$. 

Define
\begin{multline*}
\NN = \{n \ge 0: p_0 \text{ is contained in a bounded simply connected domain}\\\text{with boundary in } f^n(\Gamma_0)\}.
\end{multline*}
Since $p_0 \in K(\gamma) \setminus \gamma$ and $\gamma \subset \Gamma_0$, we have $0 \in \NN$, so $\sup \NN$ is well defined. By Lemma~\ref{lem:U_0}, we can reduce the proof to the case, when the following holds:
\begin{equation}\label{eq:N}
\text{for every } n \in \NN \text{ there exists } N \in \NN \text{ such that } N \geq n \text{ and } f^N(\Gamma_0) \subset U_0.
\end{equation}

Suppose $\sup\NN = \infty$. Then \eqref{eq:N} implies that there are arbitrarily large $N$ such that $p_0$ is contained in a bounded simply connected domain with boundary in $f^N(\Gamma_0) \cap U_0$. By Corollary~A', this boundary is contained in $W$ for large enough values of $N$. This is a contradiction since $W$ is simply connected by assumption. 

Hence, $\sup \NN = N_0 < \infty$ and, again by \eqref{eq:N}, there exists a bounded simply connected domain $V$ with
\begin{equation}\label{eq:V}
\bd V \subset f^{N_0}(\Gamma_0) \subset \Gamma_0 \subset U_0,
\end{equation}
such that $p_0 \in V$ and $p_0$ is not contained in any bounded simply connected domain with boundary in $f^{N_0+1}(\Gamma_0)$.
Thus, we can define $E$ to be the bounded
component of $\C \setminus f^{N_0}(\Gamma_0)$, such that $p_0 \in E$ and 
$p_0$ is not in any bounded component of $f(E)$. Note that by \eqref{eq:V},
the set $f^{N_0}(\Gamma_0)$ is closed in $\C$ and so 
\[
\bd E \subset f^{N_0}(\Gamma_0).
\]

Let 
\[
\Omega = E \cup \bigcup\{K(\sigma): \sigma \text{ is a closed curve in } E\}.
\]
By definition, $\Omega$ is a bounded simply connected
domain in $\C$, such that $p_0\in \Omega$ and 
\[
\bd \Omega \subset \bd E \subset f^{N_0}(\Gamma_0) \subset U_0.
\]

We claim that for any given $n > 0$, one of the following must be satisfied:
\begin{equation}\label{eq:or}
f^n(\bd\Omega) \cap \Omega = \emptyset \quad \text{or}  \quad f^n(\bd\Omega)
\subset \Omega.
\end{equation}
To see this observe that if $n \neq\ell p$ for all $\ell > 0$, then
$f^n(\bd\Omega) \cap \bd\Omega = \emptyset$, so \eqref{eq:or} holds due to the
connectedness of $\bd\Omega$ and $f^n(\bd\Omega)$. If $n =\ell p$ for some $\ell >
0$, then $f^n(\bd\Omega) \subset f^{N_0}(\Gamma_0)$, so $f^n(\bd\Omega)$
is disjoint from $E$. Hence, if $f^n(\bd\Omega)$
intersects $\Omega$, then $f^n(\bd\Omega) \cap K(\sigma) \neq \emptyset$ for a closed 
curve $\sigma \subset E$, so in fact $f^n(\bd\Omega)\subset K(\sigma) \subset \Omega$. This shows \eqref{eq:or}.

Using \eqref{eq:or}, we conclude that one of the following three cases holds: $\Omega \subset K(f(\bd\Omega))$, $\Omega \subset \ext(f(\bd\Omega))$ or $f(\bd\Omega) 
\subset \Omega$. The first case is not possible since it would imply that
$p_0$ is in a bounded simply connected domain with boundary in $f^{N_0+1}(\Gamma_0)$, 
which contradicts the definition of $N_0$. The second case implies that the assumptions of Corollary~\ref{cor:mapout1} are satisfied for $X = \bd\Omega$ (by Torhorst's Theorem~\ref{theorem:torhorst}, $\bd \Omega$ is locally connected; moreover, $f$ has no fixed points in $\bd\Omega$), so $f$ has a
weakly repelling fixed point. Hence, the remaining case is 
\[
f(\bd\Omega) \subset \Omega.
\]

By \eqref{eq:or} and the fact that $f^{pn} \to \infty$ as $n\to\infty$ uniformly on
$\bd\Omega$, there exists a (minimal) number $m \geq 2$ such that 
\begin{equation}\label{eq:f^j}
f(\bd\Omega), \ldots, f^{m-1}(\bd\Omega)\subset \Omega  \quad \text{and} \quad f^m(\bd\Omega) \cap\Omega = \emptyset.
\end{equation}
If $f^m(\bd\Omega) \cap\overline{\Omega} = \emptyset$, the domain $\Omega$ satisfies the assumptions of 
Proposition~\ref{mapin}, so $f$ has a weakly repelling fixed point.
Hence, we are left with the case $f^m(\bd\Omega) \cap\bd\Omega \neq \emptyset$, which implies $m=\ell p$ for a certain $\ell > 0$ and, consequently, $f^m(\bd\Omega) \subset U_0$.

In this case we will see that we can slightly modify the domain $\Omega$ to a new domain $\Omega^{\prime}$, so that $\Omega^{\prime}$ satisfies the condition \eqref{eq:f^j} and $f^m(\bd\Omega^{\prime}) \cap \overline{\Omega^{\prime}} = \emptyset$. Then Proposition~\ref{mapin} applies to $\Omega^{\prime}$ and $f$ has a weakly repelling fixed point.

To define the set $\Omega^{\prime}$ with the desired conditions, let
\[
D_\varepsilon=\{z \in U_0: \varrho_{U_0}(z, \bd \Omega) \leq \varepsilon\}
\]
for a small $\varepsilon >0$. Then $D_\varepsilon$ is a compact subset of $U_0$. It is immediate by \eqref{eq:f^j}, that if $\varepsilon$ is small enough, then all sets $f(\bd\Omega), \ldots, f^{m-1}(\bd\Omega)$ are contained in the same bounded component $\Omega^{\prime}$ of $\Omega \setminus D_\varepsilon$, such that
$\overline{\Omega^{\prime}}\subset \Omega$. Since $\bd\Omega$ is connected, the set $D_\varepsilon$ is also connected and, consequently, $\Omega^{\prime}$ is simply connected. Moreover, 
\begin{equation}\label{eq:bdOmega'}
\bd\Omega' \subset \{z \in U_0:\varrho_{U_0}(z, \bd \Omega) = \varepsilon\}
\end{equation}
and, since $\overline{\Omega^{\prime}}\subset \Omega$ and $f^m(\bd\Omega) \cap\Omega = \emptyset$, we have
\begin{equation}\label{eq:dist_Omega}
\varrho_{U_0}(z, w) \geq \varepsilon \quad \text{for every } z
\in \overline{\Omega'} \cap U_0 \text{ and } w \in f^m(\bd \Omega)
\end{equation}
(otherwise, connecting $z$ to $w$ in $U_0$ by a curve $\kappa$ of hyperbolic length smaller than $\varepsilon$, we would find $z' \in \bd\Omega' \cap \kappa$ and $w' \in \bd\Omega \cap \kappa$ such that $\varrho_{U_0}(z', w')  < \varepsilon$, which contradicts \eqref{eq:bdOmega'}).

As $f^m$ maps $U_0$ into itself, Schwarz--Pick's Lemma \ref{lemma:schwarz_pick} implies that for $z \in \bd\Omega'$ and $w \in \bd\Omega$ we have
\begin{equation}\label{eq:rho}
\varrho_{U_0}(f^m(z), f^m(w)) \leq \varrho_{U_0}(z, w),
\end{equation}
with strict inequality unless a lift of $f^m$ to a universal cover of $U_0$
is a M\"obius transformation. Suppose the inequality in \eqref{eq:rho} is not
strict. Then the first assumption of Lemma~\ref{lem:cover} is satisfied for $U =
U_0$ and $F = f^m$, while the additional assumption of this lemma is also fulfilled since $W$ is simply connected. Hence, by Lemma~\ref{lem:cover} we conclude that $U_0$ is simply connected,
a contradiction with $p_0 \in \Omega$ and $\partial \Omega \subset U_0$. 

Therefore, the inequality in \eqref{eq:rho} is strict, and by the compactness of $\bd \Omega$ we have
\begin{equation}\label{eq:epsilon}
\varrho_{U_0}(f^m(z), f^m(\bd\Omega)) < \varrho_{U_0}(z, \bd\Omega) =
\varepsilon \quad \text{for every } z \in \bd\Omega^{\prime}.
\end{equation}
This together with \eqref{eq:dist_Omega} implies
\[
f^m(\bd\Omega') \cap \overline{\Omega'} = \emptyset.
\]
Note also that if $\varepsilon$ is sufficiently small, then by \eqref{eq:f^j},
\[
f(\bd\Omega'), \ldots, f^{m-1}(\bd\Omega')\subset \Omega'.
\]
Hence, the assumptions of Proposition~\ref{mapin} are satisfied for $\Omega'$, and $f$
has a weakly repelling fixed point. This concludes the proof in Case~2 ($W$ is simply connected) and, in fact, the proof of Theorem~B.

\section{Proof of Theorem C} \label{sec:proof_thmC}

In what follows we assume that $f:\C\to \clC$ is a meromorphic map with a cycle of Herman rings $U_0, \ldots , U_{p-1}$ for some $p > 0$. Then there exists a biholomorphic map 
\[
\psi: U_0 \to \{z: 1/r<|z|<r\}
\]
for some $r > 1$, such that
$\psi \circ f^p \circ \psi^{-1} = R_\alpha$, where $R_\alpha(z)= e^{2\pi i \alpha} z$ and $\alpha \in \R\setminus \Q$. 

Herman rings are multiply connected by definition. The goal is to show that in this setup, $f$ must have a weakly repelling fixed point. Let
\[
\gamma = \psi^{-1}(\{z: |z| = 1\}).
\]
Then $\gamma$ is a Jordan curve in $U_0$. If $p=1$, then Lemma~\ref{mapout} 
applies to $\Omega=\inter(\gamma)$, and $f$ has a weakly repelling fixed point.
Hence, in what follows we assume $p>1$ and, consequently, 
$\gamma$ is a Jordan curve in $U_0$ such that
$\gamma, f(\gamma), \ldots, f^{p-1}(\gamma)$ are pairwise disjoint, $f^p(\gamma)
= \gamma$ and $\inter(\gamma) \cap J(f) \neq \emptyset$. By
Lemma~\ref{poles-in-holes}, the map $f$ has a pole $p_0$ in $\inter(f^j(\gamma))$ for some $0\leq j \leq p-1$. Without loss of
generality we assume that $j=0$, i.e. $p_0\in  \inter
(\gamma)$.

Next we discuss different relative positions of the above curves to see that the results in Section~\ref{section:configurations} imply that $f$ has a weakly repelling fixed point unless one situation occurs.  
In this case, to show the existence of a weakly repelling fixed point we will use a surgery argument, like in Shishikura's Theorem~\ref{shishikura}.

Observe that for all $j \geq
0$, we have
$f^j(\gamma) \subset \inter(f^{j+1}(\gamma))$ or $f^j(\gamma) \subset
\ext(f^{j+1}(\gamma))$. Since $f^p(\gamma) = \gamma$, we cannot have 
$f^j(\gamma) \subset \inter(f^{j+1}(\gamma))$ for all $j = 0, \ldots,
p-1$. Hence, there exists a minimal number $j_0 \in \{0, \ldots,
p-1\}$ such that $f^{j_0}(\gamma) \subset
\ext(f^{j_0+1}(\gamma))$. Set 
\[
\sigma_0 = f^{j_0}(\gamma) \quad \text{and} \quad \sigma_j = f^j(\sigma_0),\ j\geq 1
\]

By definition, $\sigma_0, \sigma_1, \ldots, \sigma_{p-1}$ are
pairwise disjoint and $\sigma_p = \sigma_0$. Moreover, $\sigma_0
\subset \ext(\sigma_1)$ and
$p_0 \in \inter(\sigma_0)$, by the minimality of $j_0$. 

Suppose first that $\sigma_1 \subset \ext(\sigma_0)$. Then $\inter(\sigma_0) \subset
\ext(\sigma_1)$, so by Corollary~\ref{cor:mapout1} for $X = \sigma_0$, the
map $f$ has a weakly repelling fixed point. Hence, we can assume 
\begin{equation}\label{eq:deleted_kr}
\sigma_1 \subset \inter(\sigma_0). 
\end{equation}
If there exists $j \in \{2, \ldots, p-1\}$ such that $\sigma_j \subset \ext
(\sigma_0)$, then the assumptions of Proposition~\ref{mapin} are satisfied for $\Omega
= \inter(\sigma_0)$, so $f$ has a weakly repelling fixed point. Therefore, from now on we suppose that 
\begin{equation} \label{eq:laprimera}
\sigma_j \subset \inter(\sigma_0) \quad \text{for }j = 1, \ldots, p-1.
\end{equation}
Suppose now that there exists $j \in \{1, \ldots, p-1\}$ such that
$\sigma_{j+1} \subset \inter(\sigma_j)$. Then the assumptions of
Proposition~\ref{mapin} are satisfied for $\Omega
= \inter(\sigma_j)$, so $f$ has a weakly repelling fixed point. Thus, we
can assume that 
\begin{equation}\label{eq:subset_ext}
\sigma_{j+1} \subset \ext(\sigma_j) \quad \text{for }j = 1, \ldots, p-1.
\end{equation}
If there exists $j \in \{1, \ldots, p-2\}$ such that
$\sigma_j \subset \inter(\sigma_{j+1})$ then, by \eqref{eq:subset_ext}, 
the assumptions of Corollary~\ref{cor:mapout2} are satisfied for $X =
\sigma_j$, so $f$ has a weakly repelling fixed point. Hence, we may also suppose
that  $\sigma_j \not\subset \inter(\sigma_{j+1})$, so
\begin{equation} \label{collons}
\inter(\sigma_j) \subset \ext(\sigma_{j+1}) \quad \text{\  for $j = 1,
\ldots,p-2$}.
\end{equation}
By Corollary~\ref{cor:mapout1} for $X = \sigma_j$, and using 
\eqref{collons} we may assume that $f$ has no poles in $\inter(\sigma_j)$ for $j =
1, \ldots, p-2$. Consequently, 
\begin{equation}\label{eq:f(K)}
f(\inter(\sigma_j)) =\inter(\sigma_{j+1}) \quad \text{for } j = 1, \ldots,
p-2.
\end{equation}
We claim that we can also reduce the proof to the case where 
\begin{equation} \label{eq:disjoint}
\inter(\sigma_1), \ldots, \inter(\sigma_{p-1}) \quad \text{are pairwise disjoint
subsets of } \inter(\sigma_0).
\end{equation}
To see this suppose otherwise, i.e.~there exist $k>0$ and $m>1$ with $k+m\leq
p-1$, such that $\sigma_{k+m} \subset \inter(\sigma_k)$ or $\sigma_k \subset 
\inter(\sigma_{k+m})$. Observe that $m=1$ is not possible by \eqref{collons}.

In the first case, observe that  by \eqref{eq:f(K)}, $f^{p-k-m}
(\inter(\sigma_k)) = \inter(\sigma_{p-m}) \subsetneq \inter(\sigma_0)$. Since $\sigma_{k+m}
\subset \inter(\sigma_k)$, we have $\sigma_0=f^{p-k-m} (\sigma_{k+m}) \subset
\inter(\sigma_{p-m})$, which again is not possible.

In the second case, again by \eqref{eq:f(K)}, $f^{p-k-m-1}
(\inter(\sigma_{k+m})) = \inter(\sigma_{p-1})$. Since $\sigma_k \subset \inter(\sigma_{k+m})$
we have $\sigma_{p-m-1}= f^{p-k-m-1}(\sigma_k) \subset \inter(\sigma_{p-1})$.
Hence, there exists $z_0\in \inter(\sigma_{p-1})$ such that $f(z_0 )\in
\inter(\sigma_0)$. Then, Lemma~\ref{mapout} with $\Omega=\inter(\sigma_{p-1})$
and $D=\inter(\sigma_0)$ provides the existence of a weakly repelling fixed point of $f$.

Hence, we may assume \eqref{eq:disjoint}. By Lemma~\ref{mapout} applied exactly
as above we may also suppose that 
\begin{equation}\label{eq:f(K)'}
f(\inter(\sigma_{p-1})) = \overline{\ext(\sigma_0)}.
\end{equation}
Finally, suppose that $f(\inter(\sigma_0)) \supset \inter(\sigma_1)$, which
together with \eqref{eq:f(K)} implies that $f(\inter(\sigma_0)) =\Ch$. By
considering a preimage of $D=\inter(\sigma_0)$ compactly contained inside $D$,
and applying Lemma~\ref{mapout}, it follows again that $f$ has a weakly
repelling fixed point. Hence, from now on we also suppose that 
\begin{equation} \label{eq:laultima}
\inter(\sigma_1) \not \subset  f(\inter(\sigma_0)),
\end{equation}
which implies that there exists a neighborhood $N$ of $\inter(\sigma_0)$ such that $f(N\cap \ext(\sigma_0)) \subset \inter(\sigma_1)$.
 
At this point we work under the assumptions
\eqref{eq:deleted_kr}--\eqref{eq:laultima}, as shown in
Figure~\ref{fig:finalsetup}. 

\begin{center}
\begin{figure}[hbt!]
\def\svgwidth{0.4\textwidth}
\begingroup%
  \makeatletter%
  \providecommand\color[2][]{%
    \errmessage{(Inkscape) Color is used for the text in Inkscape, but the package 'color.sty' is not loaded}%
    \renewcommand\color[2][]{}%
  }%
  \providecommand\transparent[1]{%
    \errmessage{(Inkscape) Transparency is used (non-zero) for the text in Inkscape, but the package 'transparent.sty' is not loaded}%
    \renewcommand\transparent[1]{}%
  }%
  \providecommand\rotatebox[2]{#2}%
  \ifx\svgwidth\undefined%
    \setlength{\unitlength}{440.075bp}%
    \ifx\svgscale\undefined%
      \relax%
    \else%
      \setlength{\unitlength}{\unitlength * \real{\svgscale}}%
    \fi%
  \else%
    \setlength{\unitlength}{\svgwidth}%
  \fi%
  \global\let\svgwidth\undefined%
  \global\let\svgscale\undefined%
  \makeatother%
  \begin{picture}(1,0.59157424)%
    \put(0,0){\includegraphics[width=\unitlength]{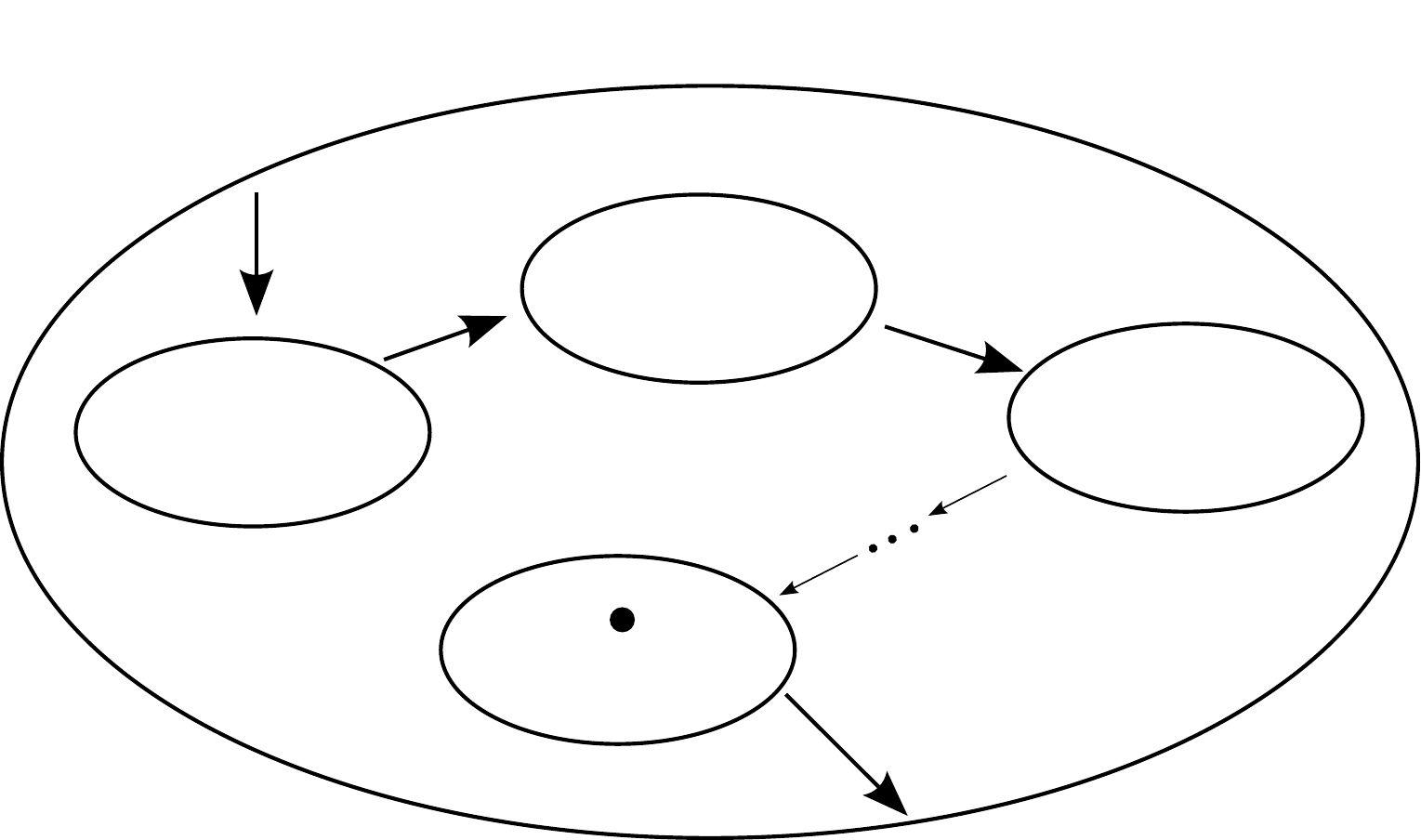}}%
    \put(0.38226793,0.10733605){\color[rgb]{0,0,0}\makebox(0,0)[lb]{\smash{$p_0$}}}%
    \put(0.52405361,0.56283216){\color[rgb]{0,0,0}\makebox(0,0)[lb]{\smash{$\sigma$}}}%
    \put(0.07458129,0.17236573){\color[rgb]{0,0,0}\makebox(0,0)[lb]{\smash{$\sigma_1$}}}%
    \put(0.40179823,0.2762441){\color[rgb]{0,0,0}\makebox(0,0)[lb]{\smash{$\sigma_2$}}}%
    \put(0.72641826,0.18015656){\color[rgb]{0,0,0}\makebox(0,0)[lb]{\smash{$\sigma_3$}}}%
    \put(0.43400059,0.02953285){\color[rgb]{0,0,0}\makebox(0,0)[lb]{\smash{$\sigma_{p-1}$}}}%
  \end{picture}%
\endgroup%
\caption{The final setup in the proof of Theorem~C.} 
\label{fig:finalsetup}
\end{figure}
\end{center}

Observe that the situation is reminiscent of the setup of Shishikura's Theorem~\ref{shishikura} for $V_0=\ext(\sigma_0)$, $V_1=\inter(\sigma_1)$ and $k=p-1$,
except for one hypothesis, namely $f^{k}(\overline{V}_1) \subset V_0$, which
instead reads as $f^k(\overline{V_1})=\overline{V_0}$. 

We shall conclude the proof with an alternative surgery argument, which is a
particular case of Shishikura's surgery in \cite[Theorem~6]{shishikura2}. The idea is to
convert the $p$-cycle of Herman rings into a $p$-cycle of Siegel discs, by
gluing a rigid rotation in $\ext(\sigma_0)$ (for the $p$-th iterate). This will
provide the existence of a weakly repelling fixed point in $\inter(\sigma_0) \setminus
\bigcup_{j=1}^{p-1} \inter(\sigma_j)$. 

We sketch the details for completeness. Redefine the cycle of Herman rings so
that $\sigma_0 \subset U_0$. Then  
$ \sigma_0 = \psi^{-1}(\{z: |z| = 1\})$. Since $\psi|_{\sigma_0}$ is real analytic,
there exists a quasiconformal homeomorphism
\[
\Psi:\overline{\ext(\sigma_0)}  \to \Ch\setminus \D
\]
such that $\Psi=\psi$ on $\sigma_0$. We now define $h:\overline{\ext(\sigma_0)} \to
\overline{\ext(\sigma_0)}$ as 
\[ 
h = \Psi^{-1}  \circ R_\alpha \circ \Psi.
\]
Note that $h^n= \Psi^{-1}  \circ R_\alpha^n \circ \Psi$ and therefore $h^n$ is uniformly quasiregular for all $n>0$. 

Since $f^p$ is conjugate to $R_\alpha$ on $\sigma_0$, it follows that $f$ has
degree one on $\sigma_j$ for all $j=1,\ldots, p$. Together with \eqref{eq:f(K)}
and \eqref{eq:f(K)'}, this implies that for all $j=1,\ldots,p-1$, the map
$f|_{\inter(\sigma_j)}$ is univalent and  hence it has a univalent inverse. We now
define a new map on the Riemann sphere as follows:
\[
F=\begin{cases}
f & \text{\ on $\overline{\inter(\sigma_0)}$}\\
\left(f\mid_{\inter(\sigma_1)} \right)^{-1} \circ \cdots \circ 
\left(f\mid_{\inter(\sigma_{p-1})} \right)^{-1} \circ h & \text{\ on $\ext(\sigma_0)$}.
\end{cases}
\]
Note that $F^p|_{\ext(\sigma_0)}=h$ and  $F$ is holomorphic everywhere except on
$\ext(\sigma_0)$, where it is quasiconformal. 
Now we define a conformal structure $\mu$ on $\clC$ setting
\[
\mu = \begin{cases}
(\Psi^{-1})^* \mu_0 & \text{on } \ext(\sigma_0)\\
\left(\left(f\mid_{\inter(\sigma_j)} \right)^{-1} \circ \cdots \circ 
\left(f\mid_{\inter(\sigma_{p-1})} \right)^{-1}\right)^* \mu & \text{on }
\inter(\sigma_j) \text{ for } j = 1, \ldots, p-1\\
\mu_0 & \text{elsewhere},
\end{cases}
\]
where $\mu_0$ is the standard structure. Then $\mu$ is bounded and
$F$-invariant, so by the Measurable Riemann Mapping Theorem, $F$ is
quasiconformally conjugate to a rational map $g$, under a quasiconformal
homeomorphism $\phi:\clC\to\clC$.

One can check that on some neighborhood of $\phi(\overline{\ext(\sigma_0)})$
the map $\Psi \circ \phi^{-1}$ is conformal and conjugates $g^p$ to $R_\alpha$.
Hence, $g$ has a $p$-cycle of Siegel discs containing
$\phi(\overline{\ext(\sigma_0)}) \cup \phi(\inter(\sigma_1)) \cup \cdots \cup
\phi(\inter(\sigma_{p-1}))$. Since $g$ is rational, it has a weakly repelling fixed
point, which cannot lie in the Siegel cycle. But $g$ is conformally
conjugate to $f$ everywhere else. Hence, $f$ has a weakly repelling fixed point.
This concludes the proof of Theorem~C.


\bibliography{baker}

\begin{thebibliography}{10}

\bibitem{ahlfors}
Lars~V. Ahlfors.
\newblock {\em Conformal invariants}.
\newblock AMS Chelsea Publishing, Providence, RI, 2010.
\newblock Topics in geometric function theory, Reprint of the 1973 original,
  With a foreword by Peter Duren, F. W. Gehring and Brad Osgood.

\bibitem{baker75}
I.~N. Baker.
\newblock The domains of normality of an entire function.
\newblock {\em Ann. Acad. Sci. Fenn. Ser. A I Math.}, 1(2):277--283, 1975.

\bibitem{bakdom}
I.~N. Baker and P.~Dom{\'{\i}}nguez.
\newblock Analytic self-maps of the punctured plane.
\newblock {\em Complex Variables Theory Appl.}, 37(1-4):67--91, 1998.

\bibitem{bkl1}
I.~N. Baker, J.~Kotus, and L{\"u}~Yinian.
\newblock Iterates of meromorphic functions. {I}.
\newblock {\em Ergodic Theory Dynam. Systems}, 11(2):241--248, 1991.

\bibitem{barfag}
Krzysztof Bara{\'n}ski and N{\'u}ria Fagella.
\newblock Univalent {B}aker domains.
\newblock {\em Nonlinearity}, 14(3):411--429, 2001.

\bibitem{bergweiler}
Walter Bergweiler.
\newblock Iteration of meromorphic functions.
\newblock {\em Bull. Amer. Math. Soc. (N.S.)}, 29(2):151--188, 1993.

\bibitem{bergweiler2}
Walter Bergweiler.
\newblock Newton's method and {B}aker domains.
\newblock {\em J. Difference Equ. Appl.}, 16(5-6):427--432, 2010.

\bibitem{berdrasin}
Walter Bergweiler, David Drasin, and James~K. Langley.
\newblock Baker domains for {N}ewton's method.
\newblock {\em Ann. Inst. Fourier (Grenoble)}, 57(3):803--814, 2007.

\bibitem{berter}
Walter Bergweiler and Norbert Terglane.
\newblock Weakly repelling fixpoints and the connectivity of wandering domains.
\newblock {\em Trans. Amer. Math. Soc.}, 348(1):1--12, 1996.

\bibitem{buff}
Xavier Buff.
\newblock Virtually repelling fixed points.
\newblock {\em Publ. Mat.}, 47(1):195--209, 2003.

\bibitem{buffruc}
Xavier Buff and Johannes R{\"u}ckert.
\newblock Virtual immediate basins of {N}ewton maps and asymptotic values.
\newblock {\em Int. Math. Res. Not.}, pages Art. ID 65498, 18, 2006.

\bibitem{carlesongamelin}
Lennart Carleson and Theodore~W. Gamelin.
\newblock {\em Complex dynamics}.
\newblock Universitext: Tracts in Mathematics. Springer-Verlag, New York, 1993.

\bibitem{cowen}
Carl~C. Cowen.
\newblock Iteration and the solution of functional equations for functions
  analytic in the unit disk.
\newblock {\em Trans. Amer. Math. Soc.}, 265(1):69--95, 1981.

\bibitem{dominguez}
P.~Dom{\'{\i}}nguez.
\newblock Dynamics of transcendental meromorphic functions.
\newblock {\em Ann. Acad. Sci. Fenn. Math.}, 23(1):225--250, 1998.

\bibitem{doua-hubb}
Adrien Douady and John~Hamal Hubbard.
\newblock On the dynamics of polynomial-like mappings.
\newblock {\em Ann. Sci. \'Ecole Norm. Sup. (4)}, 18(2):287--343, 1985.

\bibitem{faghen}
N{\'u}ria Fagella and Christian Henriksen.
\newblock Deformation of entire functions with {B}aker domains.
\newblock {\em Discrete Contin. Dyn. Syst.}, 15(2):379--394, 2006.

\bibitem{FJT1}
N{\'u}ria Fagella, Xavier Jarque, and Jordi Taix{\'e}s.
\newblock On connectivity of {J}ulia sets of transcendental meromorphic maps
  and weakly repelling fixed points. {I}.
\newblock {\em Proc. Lond. Math. Soc. (3)}, 97(3):599--622, 2008.

\bibitem{FJT2}
N{\'u}ria Fagella, Xavier Jarque, and Jordi Taix{\'e}s.
\newblock On connectivity of {J}ulia sets of transcendental meromorphic maps
  and weakly repelling fixed points {II}.
\newblock {\em Fund. Math.}, 215(2):177--202, 2011.

\bibitem{fatou2}
P.~Fatou.
\newblock Sur les \'equations fonctionnelles.
\newblock {\em Bull. Soc. Math. France}, 47:161--271, 1919.

\bibitem{henriksen}
Christian Henriksen.
\newblock Blaschke products and proper holomorphic mappings.
\newblock {\em J. Geom. Anal.}, 11(4):619--625, 2001.

\bibitem{hubbardschleicher}
John Hubbard, Dierk Schleicher, and Scott Sutherland.
\newblock How to find all roots of complex polynomials by {N}ewton's method.
\newblock {\em Invent. Math.}, 146(1):1--33, 2001.

\bibitem{konig}
Harald K{\"o}nig.
\newblock Conformal conjugacies in {B}aker domains.
\newblock {\em J. London Math. Soc. (2)}, 59(1):153--170, 1999.

\bibitem{manning}
Anthony Manning.
\newblock How to be sure of finding a root of a complex polynomial using
  {N}ewton's method.
\newblock {\em Bol. Soc. Brasil. Mat. (N.S.)}, 22(2):157--177, 1992.

\bibitem{pommar}
A.~Marden and Ch. Pommerenke.
\newblock Analytic self-mappings of infinite order of {R}iemann surfaces.
\newblock {\em J. Analyse Math.}, 37:186--207, 1980.

\bibitem{mayer}
Sebastian Mayer and Dierk Schleicher.
\newblock Immediate and virtual basins of {N}ewton's method for entire
  functions.
\newblock {\em Ann. Inst. Fourier (Grenoble)}, 56(2):325--336, 2006.

\bibitem{mcmullen1}
Curt McMullen.
\newblock Families of rational maps and iterative root-finding algorithms.
\newblock {\em Ann. of Math. (2)}, 125(3):467--493, 1987.

\bibitem{mcmullen2}
Curt McMullen.
\newblock Braiding of the attractor and the failure of iterative algorithms.
\newblock {\em Invent. Math.}, 91(2):259--272, 1988.

\bibitem{milnor}
John Milnor.
\newblock {\em Dynamics in one complex variable}, volume 160 of {\em Annals of
  Mathematics Studies}.
\newblock Princeton University Press, Princeton, NJ, third edition, 2006.

\bibitem{przytycki}
Feliks Przytycki.
\newblock Remarks on the simple connectedness of basins of sinks for iterations
  of rational maps.
\newblock In {\em Dynamical systems and ergodic theory ({W}arsaw, 1986)},
  volume~23 of {\em Banach Center Publ.}, pages 229--235. PWN, Warsaw, 1989.

\bibitem{rippon}
P.~J. Rippon.
\newblock Baker domains of meromorphic functions.
\newblock {\em Ergodic Theory Dynam. Systems}, 26(4):1225--1233, 2006.

\bibitem{ripponstallard}
P.~J. Rippon and G.~M. Stallard.
\newblock Singularities of meromorphic functions with {B}aker domains.
\newblock {\em Math. Proc. Cambridge Philos. Soc.}, 141(2):371--382, 2006.

\bibitem{roesch}
Pascale Roesch.
\newblock Puzzles de {Y}occoz pour les applications \`a allure rationnelle.
\newblock {\em Enseign. Math. (2)}, 45(1-2):133--168, 1999.

\bibitem{johannes}
Johannes R{\"u}ckert and Dierk Schleicher.
\newblock On {N}ewton's method for entire functions.
\newblock {\em J. Lond. Math. Soc. (2)}, 75(3):659--676, 2007.

\bibitem{shishikura2}
Mitsuhiro Shishikura.
\newblock On the quasiconformal surgery of rational functions.
\newblock {\em Ann. Sci. \'Ecole Norm. Sup. (4)}, 20(1):1--29, 1987.

\bibitem{shishikura}
Mitsuhiro Shishikura.
\newblock The connectivity of the {J}ulia set and fixed points.
\newblock In {\em Complex dynamics}, pages 257--276. A K Peters, Wellesley, MA,
  2009.

\bibitem{sullivan}
Dennis Sullivan.
\newblock Quasiconformal homeomorphisms and dynamics. {I}. {S}olution of the
  {F}atou-{J}ulia problem on wandering domains.
\newblock {\em Ann. of Math. (2)}, 122(3):401--418, 1985.

\bibitem{tanlei}
Lei Tan.
\newblock Branched coverings and cubic {N}ewton maps.
\newblock {\em Fund. Math.}, 154(3):207--260, 1997.

\bibitem{whyburn}
Gordon~Thomas Whyburn.
\newblock {\em Analytic topology}.
\newblock American Mathematical Society Colloquium Publications, Vol. XXVIII.
  American Mathematical Society, Providence, R.I., 1963.

\end{thebibliography}

\end{document}